\newcommand{\R}{\textnormal{I\kern-0.21emR}}
\newcommand{\N}{\textnormal{I\kern-0.21emN}}
\renewcommand{\geq}{\geqslant}
\renewcommand{\leq}{\leqslant}
\def\e{{\varepsilon}}
\def\TT{{(0,T)\times \T}}
\def\T{{\mathbb{T}^d}}
\newcommand{\vp}{\varphi}
\newcommand{\Kt}{{\widetilde{K}}}
\renewcommand{\th}{\theta}
\def\YYint#1#2#3{{\setbox0=\hbox{$#1{#2#3}{\iint}$}
    \vcenter{\hbox{$#2#3$}}\kern-.51\wd0}}
\newtheorem*{theorem*}{Theorem}
\newtheorem{theorem}{Theorem}  
\newtheorem{proposition}{Proposition}
\newtheorem{definition}{Definition}
\newtheorem{lemma}{Lemma}
\theoremstyle{definition}\newtheorem{remark}{Remark}
\def\O{{\T}}
\def\n{{\nabla}}
\def\p{{\varphi}}
 \newcommandx{\unsure}[2][1=]{\todo[linecolor=red,backgroundcolor=red!25,bordercolor=red,#1]{#2}}
 \newcommandx{\change}[2][1=]{\todo[linecolor=blue,backgroundcolor=blue!25,bordercolor=blue,#1]{#2}}
 \newcommandx{\info}[2][1=]{\todo[linecolor=green,backgroundcolor=green!25,bordercolor=green,#1]{#2}}
 \newcommandx{\improvement}[2][1=]{\todo[linecolor=yellow,backgroundcolor=yellow!25,bordercolor=yellow,#1]{#2}}
  \newcommandx{\biblio}[2][1=]{\todo[linecolor=blue,backgroundcolor=magenta!25,bordercolor=blue,#1]{#2}}
\newcommand{\add}[1]{{{\color{blue}#1}}}
\newcommand{\note}[1]{{\textbf{\color{red}#1}}}
\begin{document}
\title{Mean-field games for harvesting problems: \\Uniqueness, long-time behaviour and weak KAM theory}


\author{Ziad Kobeissi\footnote{ Universit\'e Paris-Saclay, CNRS, CentraleSup\'elec, Inria, Laboratoire des signaux et syst\`emes, 91190, Gif-sur-Yvette, France (\texttt{ziad.kobeissi@inria.fr})},\,  Idriss Mazari-Fouquer\footnote{CEREMADE, UMR CNRS 7534, Universit\'e Paris-Dauphine, Universit\'e PSL, Place du Mar\'echal De Lattre De Tassigny, 75775 Paris cedex 16, France, (\texttt{mazari@ceremade.dauphine.fr})},\,  Dom\`enec Ruiz-Balet\footnote{Department of Mathematics, Imperial College, London SW7 2AZ, United Kingdom (\texttt{d.ruiz-i- balet@imperial.ac.uk})}}

\maketitle

\begin{abstract}

The goal of this paper is to study a Mean Field Game (MFG) system stemming from the harvesting of resources. 
Modelling the latter through a reaction-diffusion equation and the harvesters as competing rational agents, we are led to  a non-local (in time and space) MFG system that consists of three equations, the study of which is quite delicate. The main focus of this paper is on the derivation of analytical results (\emph{e.g.} existence, uniqueness) and of long time behaviour (here, convergence to the ergodic system). We provide some explicit solutions to this ergodic system.\end{abstract}

\noindent\textbf{Keywords:} Mean Field Games, Reaction-diffusion equations, Spatial ecology, Weak KAM theory.

\medskip

\noindent\textbf{MSC2020 classification:} 35B40, 35Q89, 35Q92, 35Q93.

\paragraph{Acknowledgement:} This work was initiated during visits of D. Ruiz-Balet at CEREMADE, Paris Dauphine Universit\'e PSL, and of I. Mazari-Fouquer at the Imperial College; the hospitality of both institutions is acknowledged. These visits were made possible by the Abraham de Moivre short stage mobility grant, the KTK SprintChallenge  and a PEPS project from CNRS. I. Mazari-Fouquer was partially funded by a PSL Young Researcher Starting Grant 2023.

\section{Introduction}

\subsection{Scope of the paper}
Our goal in this paper is the derivation and analysis of a model where rational agents compete in order to harvest as much as possible from a common pool of  renewable resources. In order to fix the terminology, resources will be referred to as ``fishes", while the competing agents will be dubbed ``fishermen". This choice of interpretation is motivated by the fact that the exploitation of fisheries is a global problem in the handling and preservation of ecosystems, and can be used as a paradigmatic case of resources harvesting.  From a modelling point of view, our main motivation is to propose a new approach to the theoretical study of fishing phenomena, spatial ecology and the tragedy of the commons.
While several biological and economical contributions have tackled this issue from the applied point of view, shedding new light on our understanding of the phenomenon, the mathematical literature devoted to this issue emerged only recently. We review it in section \ref{Se:Biblio}. In this work, we provide a qualitative study of this new type of system blending ecological phenomena and economical competition. To do so, we rely on two main theories. The first one is the classical theory of reaction-diffusion equations, which are commonly used in the description of population dynamics  \cite{CantrellCosner}. These equations are used, here, to model the fishes' density $\theta$, as they can incorporate intra-specific phenomena (birth, death, interactions with the environment) and sensitivity  to external influences (typically, fishing).  Second, in order to model the behaviour of a large number of fishermen engaged in economic competition, we use the Mean Field Game framework, which allows to track down the evolution of the density $m$ of fishermen. Thus, there are two additional equations to be dealt with, one  for $m$, and one for the value function $u$ (that is, the maximal expected profit of an agent).

Consequently, we obtain a system of three coupled partial differential equations (see \eqref{Eq:MFGCompet} for details).
This system belongs to a novel class of MFG characterised by a non-local (both in time and space) coupling which, to the best of our knowledge, was first studied in our previous paper \cite{KMFRB23}.
Our main objective is to propose flexible techniques to investigate the analytical aspects of such systems and, more specifically, questions of uniqueness and longtime behaviour of the solutions. In the next section, we derive our main system, which will allow for a more in-depth discussion of our motivations. 

\subsection{The optimal control problem of an individual fisherman}
\label{subsec:optcont}
Throughout, the variable $x$ refers to the spatial location of fishes and fishermen, while $t$ denotes the time. As is common in the MFG literature, we make the stringent assumption that the spatial domain is the $d$-dimensional torus $\T$. This choice helps to avoid analytical difficulties linked to boundary conditions in MFG systems and allows us to focus on the innovative aspects of our approach. 

In the remainder of this section, we focus on the description of a simple scenario that is the building block of the forthcoming MFG system. In this scenario, a single fisherman controls his trajectory, while the rest of the fishermen population has a pre-determined evolution. The goal of this single fisherman is  to maximise his profit over a time interval $[0,T]$, where $T>0$ denotes a fixed time horizon.
We assume that the fisherman can control his velocity, which is also subject to some brownian fluctuation, so that its trajectory $(X_t)_{t>0}$ satisfies  the following dynamic:
\begin{equation}\label{Eq:Representative}
d X_t=\alpha_tdt+\sqrt{2\nu}dB_t,\end{equation}
where $\nu\geq 0$ is a viscosity coefficient, $(B_t)_{t\geq 0}$ is a standard Brownian motion and $\alpha$ is the control.

The population of fishes is modelled by a density $\theta:[0,T]\times\T\to[0,\infty)$.
At time $t$ and state $x$, the fisherman's profit per unit of time from fishing is assumed to be $\rho\star(\theta(t))(x)$ (denoted as $\rho\star\theta(t,x)$ for simplicity), where $\star$ denotes the convolution  in space and $\rho$ is a smooth, even function with a compact support, satisfying $\rho\in \mathscr C^\infty_c(\T)\,, \rho\geq 0\,, \int \rho=1$.
This modelling choice reflects a situation where the fisherman  can catch fishes within a small surrounding area (typically, using fishing nets).
The fisherman's objective is then to choose $\alpha$ to maximise his net profit or, in other words, to solve the optimisation problem:
 \begin{equation}\label{Eq:OptCont}
\max_{\alpha}J(\alpha):=\mathbb E\left[\int_0^T (\rho \star\theta)\left(t,X_t\right)dt-\int_0^TL(X_t,\alpha_t)dt\right],
 \end{equation}
where $L$ is a running cost referred to as the Lagrangian,
which we take in this paper  to be quadratic:
\[ L(x,\alpha)=\frac{\Vert \alpha\Vert^2}2.\]
We might consider more general Lagrangians;
however, in order to put forth the weak KAM framework of \cite{CardaliaguetKAM}, we shall make use of regularising properties of the HJB equation that are easier to derive when working with quadratic Hamiltonians, and, consequently, quadratic Lagrangians.

Observe that there are several underlying assumptions in this formulation: first, we assume that a single fisherman has a negligible impact on the fishes' population. Mathematically, this is interpreted as 
$\theta$ being independent of the control $\alpha$
(a condition that will not hold when the discussion shifts to MFG systems in Section \ref{subsec:MFG}). Second, we assume that the fisherman has a perfect knowledge of the density $\theta$ in all places, and at all times (even in the future).
We define the value function $u=u(t,x)$ as the maximal expected profit in the interval $[t,T]$
for an agent located in $x$ at time~$t$. In other words, $u$ is defined as 
\[ u(t,x)=\max_{\alpha}\mathbb E\left[\int_t^T (\rho\star\theta)(s,X_s)ds-\int_t^T\frac{\|\alpha_s\|^2}2ds\,\Big|\, X_t=x\right].\]
By a standard application of the Bellman principle, the function $u$  is the unique (classical if $\nu>0$, viscosity if $\nu=0$) solution of the HJB equation 
\begin{equation}\label{Eq:HJBCompet}
\begin{cases}-\partial_t u-\nu\Delta u-\frac{\Vert \n_xu\Vert^2}2=\rho\star\theta\text{ in }(0,T)\times \T\,,\\ u(T,\cdot)\equiv 0\end{cases}
\end{equation}
Given $u$ a regular solution (if $\nu>0$) or a viscosity solution (if $\nu=0$) to the latter equation, an optimal control solving the individual optimisation problem is given by
\begin{equation} 
    \alpha(t,x)=\nabla_x u(t,x).
\end{equation}
Assume now that the initial location of the fisherman is given by a random variable $X_0$ following a fixed
distribution $m_0\in\mathcal{P}(\T)$.
Denoting by $m(t)$ the law of $X_t$ at any given time $t\in[0,T]$, $m$ satisfies a Fokker-Planck-Kolmogorov (FPK) equation
\begin{equation}\label{Eq:FPKCompet}
\begin{cases}
\partial_tm-\nu\Delta m+\nabla \cdot\left(m \n_x u \right)=0\text{ in }(0,T)\times \T\,, \\m(0,\cdot)=m_0.\end{cases}
\end{equation}In the subsequent section, we will interpret $m$ as being both the law of a single fisherman at time $t$, but also as the distribution, at time $t$, of a fleet of fishermen initially distributed according to $m_0$ and with individual controls $\n_x u$.

\subsection{Modelling of the fishes' population}
\label{subsec:fishesdyn}
As is customary in population dynamics \cite{CantrellCosner,Lam_2022,Murray}, we assume that the population of fishes  is subject to three main phenomena:
\begin{enumerate}
\item First, diffusion, which accounts for a brownian movement in the domain. The strength of the diffusion is quantified by a diffusivity $\sqrt{2\mu}$, where $\mu>0$ is the characteristic dispersal rate.
\item Second, a linear growth and logistic death. It should be noted that some spatial heterogeneity is taken into account by considering heterogeneous resources distributions in the domain. The resources distribution is a function $K:\T\to \R$. The zones where $K$ is positive are favorable to the population, while the zones where $K$ is non-positive are lethal. The growth term in the model is of the form $K\theta$. 
    An additional death term is implemented \emph{via} a standard Malthusian non-linearity $-\theta^2$, thus accounting for crowding effects (or, alternatively, symmetric competition within the population). 
\item Finally, fishing.
        As already explained, we assume that each fisherman is able to fish within a small surrounding area, quantified through a convolution kernel $\rho$, and  his catch per unit of time is proportional to  $\rho\star\theta$. This and the fact that $\rho$ is assumed to be even lead to a fishing term of the form $-\e(\rho\star m)\theta$ where $\e$ is called the fishing pressure or capacity, and we recall that $m$ is the density of fishermen.
\end{enumerate} 
Observe that our model, although relevant given its proven ability to capture paradigmatic qualitative phenomena in population dynamics, does not incorporate other important effects. Typically, we do not cover models featuring the Allee effect, which is used in situations where, if the population is initially too small, it eventually dies out. We considered such models and their interplay with MFG problems in our previous contribution \cite{KMFRB23}, where we derived some explicit travelling waves solutions that were then used to provide instances of the tragedy of the commons. 

To underline the dependency of the resulting fishes' density on $\e$ and $m$ we denote it by $\theta_{\e,m}$.
The equation on the population density then reads
\begin{equation}\label{Eq:Main}
\begin{cases}
\partial_t \theta_{\e,m}-\mu \Delta \theta_{\e,m}=\theta_{\e,m}(K-\theta_{\e,m})-\e(\rho\star m)\theta_{\e,m}&\text{ in }(0,T)\times \T\,,
\\{\theta_{\e,m}(0,\cdot)=\theta_0}&\text{ in }\T\end{cases}
\end{equation}
where $\theta_0\in L^\infty(\T)\,, \theta_0\geq 0\,, \theta_0\neq 0$ is the initial distribution of fishes.

\subsection{The mean field game models}
\label{subsec:MFG}
As stated in Section \ref{subsec:fishesdyn}, the density of fishes $\theta$ can be obtained by solving \eqref{Eq:Main} knowing $m$, the density of fishermen. Conversely, in Section \ref{subsec:optcont}, for a fixed density of fishes $\theta$, we were able to characterise the distribution of a fleet of  fishermen. As is customary in MFG, this leads to a fixed point argument described as follows (where $m_0$ and $\theta_0$ are fixed):
\begin{itemize}
    \item
        A prediction is made
        on the values of $\theta$ at every time and location.
    \item
        With $\theta$ fixed as above,
        the control problem
        \eqref{Eq:OptCont} is solved from every location in supp$(m_0)$.
    \item
        Using the optimal control derived from the previous step,
        the PDE \eqref{Eq:FPKCompet} is solved
        to get the distribution of the fishermen $m(t)$
        at any time $t\in[0,T]$.
    \item
        With $m$ given in the previous step,
        a new fishes' distribution $\widetilde{\theta}$
        is computed by solving \eqref{Eq:Main}.
    \item
        A solution (which is also called a MFG equilibrium) is found  when $\widetilde{\theta}=\theta$.
\end{itemize}
\begin{remark}[Uniqueness in MFG systems]
In the MFG literature, it is customary to distinguish between the existence of solutions, which is often an easy consequence of fixed-point theorems, and their possible uniqueness. The latter  is usually much more challenging, and several  paradigms that we review in the bibliographical section of this work exist to tackle it.
We approach this issue by introducing a novel method that relies heavily on reaction-diffusion techniques.
This method represents one of the key contributions of our work, but nevertheless applies in a perturbative setting. The question of uniqueness in general remains open. Regarding the interest of having uniqueness of solutions to MFG systems, it is important to observe that, in order for an individual to select the control provided by the MFG equilibrium, he needs to predict the values of $\theta$ associated with this equilibrium. Furthermore, the existence of multiple equilibria makes it \emph{a priori} impossible to select a 
{``best''} one, or to define an {``expected''} behaviour of a rational agent. This is similar to the situations that arise when dealing with Nash equilibria. Thus, being able to get the uniqueness of equilibria provides a justification for the emergence of this equilibrium in practice.

For the statement of our main results on uniqueness, we refer to Section \ref{Se:Uniqueness}.
For a comprehensive discussion regarding uniqueness in MFG systems,
we refer to  Section \ref{Se:Biblio}.
\end{remark}

The complete Mean Field Game system thus reads
\begin{equation}\tag{$\bold{MFG}$}\label{Eq:MFGCompet}
\begin{cases}
-\partial_t u-\nu\Delta u-\frac{\Vert \n_xu\Vert^2}2=\rho\star\theta_{\e,m}&\text{ in }(0,T)\times \T\,,
\\ u(T,\cdot)\equiv 0,
\\\partial_tm-\nu\Delta m+\nabla \cdot\left(m\n_x u\right)=0&\text{ in }(0,T)\times \T\,, \\m(0,\cdot)=m_0,
\\ \partial_t \theta_{\e,m}-\mu \Delta \theta_{\e,m}=\theta_{\e,m}(K-\theta_{\e,m})-\e(\rho\star m)\theta_{\e,m}&\text{ in }(0,T)\times \T\,, 
\\ \theta_{\e,m}(0,\cdot)=\theta_0.
\end{cases}
\end{equation}
We will be interested in both the first-order case $\nu=0$ and the second order case $\nu>0$, the analysis of the latter being a necessary detour to obtain the required regularity estimates for the former, as is customary in MFG.

\begin{remark}[Notable difference with other classes of MFG]
\label{Rk:MFGclass}
Seeing $\theta_{\e,m}$ as a function of $m$  the system \eqref{Eq:MFGCompet} can \emph{a priori} be considered as an MFG system with a non-local coupling. However, unlike usual non-local MFG systems \cite{Bardi_2019,CardaliaguetLasryLionsPorretta}, the non-locality is in both space and time: $\theta_{\e,m}(t,\cdot)$ depends on the entire family $(m(s,\cdot))_{0\leq s\leq t}$. This sets \eqref{Eq:MFGCompet} apart from the classical setting of MFG systems.\end{remark}

The second part of our main results deals with
the long-time behaviour of \eqref{Eq:MFGCompet}.
Consequently, we need to introduce the associated ergodic MFG system,
which ``should" correspond to the large time asymptotics of \eqref{Eq:MFGCompet}. To make the time dependency explicit, and assuming that the solution of \eqref{Eq:MFGCompet} is unique for any $T>0$ (which will be guaranteed by Theorem \ref{Th:UniquenessMFGCompet}), let us denote the solution of \eqref{Eq:MFGCompet} by $(\theta_{\e,m}^T\,, m^T\,, u^T)$. Following the analysis of Cardaliaguet \cite{CardaliaguetKAM} it is expected that, in some ``reasonable" topologies and for some constant $\overline \lambda$, dubbed the ergodic constant, the triplet $(\theta_{\e,m}^T\,, m^T\,, u^T/{\overline \lambda })$  converges, as $T\to \infty$, to $(\overline\theta_{\e,\overline m}\,, \overline m\,, \overline u)$, where $(\overline \lambda,\overline\theta_{\e,\overline m},\overline m,\overline u)$ is the solution\footnote{See Definition ~\ref{De:SolutionMFGCompet} below for the exact meaning of ``solution".} of 
 \begin{equation}\label{Eq:ErgodicMFG}\tag{$\bold{MFG}_{\mathrm{ergo}}$}
 \begin{cases}
 \overline \lambda -\nu\Delta \overline u-\frac{\Vert \n_x\overline u\Vert^2}2=\rho\star\overline\theta_{\e,\overline m}&\text{ in }\T\,,
\\-\nu\Delta\overline m+\nabla \cdot\left(\overline m\n_x \overline u\right)=0&\text{ in }\T\,,
\\ -\mu \Delta \overline\theta_{\e,\overline m}=\overline\theta_{\e,\overline m}(K-\overline\theta_{\e,\overline m})-\e(\rho\star\overline m)\overline\theta_{\e,\overline m}&\text{ in } \T\,, 
\\ \overline\theta_{\e,m}\geq 0,\;
\overline \theta_{\e,m}\not\equiv 0,\;
 \int_{\T}\overline u=0\,, \overline m\geq 0\,, \int_\T \overline m=1.\end{cases}\end{equation}
Observe that, from the first and second line, $u$ is defined up 
to an additive constant, whence the condition $\int_{\T}\overline u=0$.
The quantity $\overline \lambda$ is the so-called ``ergodic constant''.

\subsection{Stationary solutions of diffusive logistic equations}
\label{subsec:stationary}
Our analysis will in great parts rely on the properties of stationary solutions of a reaction-diffusion equation resembling \eqref{Eq:Main}, with a general resource distribution $\Kt\in L^\infty(\T)$. We consider the following equation:
\begin{equation}\label{Eq:Main2}
\begin{cases}
\partial_t \theta-\mu\Delta \theta=\theta(\Kt-\theta)&\text{ in }(0,T)\times \T\,, 
\\ \theta(0,\cdot)=\theta_0\geq 0\,, \neq 0&\text{ in }\T.
\end{cases}
\end{equation}
From \cite{BHR}, if $\int_{\T}\Kt>0$, there exists a unique,
non-trivial stationary  solution $\overline \theta_{\Kt}$ of~\eqref{Eq:Main2}. It satisfies
\begin{equation}\label{Eq:ELD}\begin{cases}
-\mu \Delta \overline \theta_{\Kt}-\overline \theta_{\Kt}(\Kt-\overline \theta_{\Kt})=0&\text{ in }\T\,, 
\\ \overline \theta_{\Kt}\geq 0\,, \overline\theta_\Kt\neq 0\,.\end{cases}\end{equation}
The steady state $\overline\theta_{\Kt}$ is globally, exponentially stable:
for $\theta_0$ a non-zero, non-negative bounded function,
there exists a constant $C_{\Kt}>0$ that depends only on $\theta_0$,
$\int_\O {\Kt}$ and $\Vert {\Kt}\Vert_{\mathscr C^1(\O)}$ such that,
for any $t\in \R_+$,
\begin{equation}\label{Eq:EldLongTime}
    \Vert \theta(t,\cdot)-\overline\theta_{\Kt}\Vert_{L^\infty(\O)}\lesssim e^{-C_{\Kt} t} \Vert \theta_0-\overline\theta_{\Kt}\Vert_{\mathscr C^0(\T)}.
\end{equation}Furthermore, this constant can be chosen to only depend on $\int_\T \theta_0^2$ and $\Vert\theta_0\Vert_{L^\infty(\T)}$.

The eigenvalues of symmetric operators will play an important roles in the upcoming proofs. For any $V\in L^\infty(\T)$ and any $\mu>0$, introduce the first eigenvalue $\lambda_1(\mu,V)$ of the operator $-\mu\Delta-V$; it admits the variational formulation
\begin{equation}\label{Eq:Rayleigh}
\lambda_1(\mu,V)
=\min_{u\in W^{1,2}(\T)\,, \int_\T u^2=1}\mu\int_\O |\n u|^2-\int_\O V u^2.\end{equation}
It is standard to see that $\lambda_1(\mu,V)$ is simple, and that it is the only eigenvalue whose associated eigenfunction has constant sign. Now, observe that, defining $V_{\Kt}:=\Kt-\overline\theta_{\Kt}$, $\overline\theta_{\Kt}$ is an eigenfunction of $-\mu\Delta-V_{\Kt}$, with eigenvalue 0. As $\theta_{\Kt}\geq0\,,\neq 0$, we deduce that $\lambda_1(\mu,V_{\Kt})=0$.
\section{Main results}
\label{Se:Results}
Our main purpose in this paper is two-fold.
First, we want to introduce a new method to obtain {analytic properties} of solutions to \eqref{Eq:MFGCompet}, see Section \ref{Se:Uniqueness}.
The question of uniqueness is not solved fully here; we partially address this issue in Theorem \ref{Th:UniquenessMFGCompet}, using an original strategy. The sharpness of this result is then mitigated in Theorem \ref{Th:Ergodic1D}, where we demonstrate that uniqueness can be achieved outside of the setting of Theorem \ref{Th:UniquenessMFGCompet}; in particular, in a simpler one-dimensional model where explicit solutions can be constructed, and their uniqueness established.
Second, we want to derive the long-time behaviour of \eqref{Eq:MFGCompet},
by showing how one might adapt the (weak) KAM framework
developed in the context of MFG in \cite{CardaliaguetKAM}
to this new blend of MFG and reaction-diffusion equations, we refer to Section \ref{Se:KAM}.

\subsection{Assumptions}
\label{subsec:assumptions}
Before we state our main assumptions and results, let us introduce some notations.
$\mathcal M(\T)$ stands for the set of Radon measures on $\T$, endowed with the total variation norm $\Vert \cdot\Vert_{\mathcal M(\T)}$. We recall that this norm is defined as follows: for any Radon measure $\mu$, decompose it into its positive and negative parts $\mu=\mu_+-\mu_-$ where $\mu_\pm$ are non-negative Radon measures with disjoint supports. Then 
\begin{equation}\label{Eq:TotalVariationNorm} \Vert \mu\Vert_{\mathcal M(\T)}=\mu_+(\T)+\mu_-(\T).\end{equation}
The Sobolev spaces $W^{k,p}(\T)$ are defined as usual. 
For functions depending only on the space variable $x\in \T$, for any $\beta\in (0,1)$, it is understood that the space $\mathscr C^{k+\beta}(\T)$  denotes the space of $k$-times differentiable functions, with $\beta$-H\"{o}lder continuous derivatives. For functions depending on time and space, for any $\alpha,\beta \in (0,1)\,, k,j\in \N$, it is understood that the space $\mathscr C^{j+\alpha,k+\beta}(\TT)$ denotes the set of functions that are $\mathscr C^{j+\alpha}$ in time and $\mathscr C^{k+\beta}$ in space.

We are now in position to define the notion of solution that will be used throughout this article;
\begin{definition}\label{De:SolutionMFGCompet}
    A solution of the MFG system \eqref{Eq:MFGCompet}
    is a triplet $(u,m,\theta)$ such that:
    \begin{itemize}
        \item     $u\in\mathscr  C^{0,1}([0,T]\times\T)$ is a viscosity solution of \eqref{Eq:HJBCompet} if $\nu=0$, and a classical solution if $\nu>0$,
        \item  $m\in C^0([0,T];\mathcal{P}(\mathbb{T}^d))$ (for the Wasserstein distance) is a solution of \eqref{Eq:FPKCompet} in the sense of distributions if $\nu=0$, and a classical solution if $\nu>0$.
        \item $\theta\in \mathscr C^{1,2}([0,T]\times\T)$ is a classical solution of \eqref{Eq:Main}.
    \end{itemize}
\end{definition}
     It should be noted here that we are requiring some strong regularity  on $\theta$. As our purpose in this paper is to focus on qualitative properties, we postpone the exploration of lower regularity solutions to future works. In particular, the regularity required of $\theta_{\e,m}$ will lead us to make some strong assumptions on $\theta_0$ and $K$.
Namely, we will assume:
\begin{enumerate}[label=$(\bold{H_K})$]
    \item
        \label{H:K}
        $K\in \mathscr C^{2+\beta}(\T)$ for some $\beta\in(0,1)$ 
        and
        $\int_\T K>0$.
\end{enumerate}
\begin{enumerate}[label=$(\bold{H_{\theta_0}})$]
    \item
        \label{H:Theta0}
        $\theta_0=\overline{\theta}_K$
        solves 
        \eqref{Eq:ELD}
        and either:
        \begin{enumerate*}[label=$\bold{\alph*)}$]
            \item
                \label{H:Theta0a}
                $d=1$
                \hspace*{0.2cm}
                or
                \hspace*{0.2cm}
            \item
                \label{H:Theta0b}
                $\Vert K-\int_\T K\Vert_{L^\infty(\T)}\leq \delta$,
                for some $\delta>0$.
        \end{enumerate*}
\end{enumerate}
Let us comment on \ref{H:Theta0}: regarding the fact that $\theta_0=\overline\theta_K$, this is not too problematic from a modelling perspective as we can think of the fishes as having adapted to their environment before the fishing starts. It will also be important in deriving our uniqueness results. This assumption could in fact be slightly relaxed by assuming that the fishes are not ``too far" from equilibrium, but this would simply make the notations heavier.  The other two conditions on the other hand may seem more mysterious. They are linked, as will be explained throughout the proofs, to some monotonicity properties that were explored in one of our previous works \cite{Mazari_2022} devoted to the investigation of Nash equilibria for a finite number of players.

We recall that $\overline \theta_K$ is the (unique) stationary state of \eqref{Eq:ELD}. By elliptic regularity, if \eqref{H:Theta0} is satisfied, then
\[ \overline \theta_K \in \mathscr C^{2+\beta}(\T).\]

\subsection{Existence and uniqueness results}
\label{Se:Uniqueness}
Let us first state our results concerning existence of solutions.
The main theorem is the following:
\begin{theorem}\label{Th:ExistenceMFGCompet}
    Let $K$ satisfy \eqref{H:K} and $\theta_0$ be $C^{2+\beta}$.
    For any $\e>0$, for any $\nu\geq 0$,  there exists at least one solution of \eqref{Eq:MFGCompet}
    (resp. \eqref{Eq:ErgodicMFG}).
\end{theorem}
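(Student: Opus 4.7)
The plan is to prove existence by a Schauder fixed-point argument, treating first the non-degenerate case $\nu>0$ (where classical regularity is available) and then deducing the first-order case $\nu=0$ by vanishing viscosity. The ergodic system \eqref{Eq:ErgodicMFG} is handled by the same scheme in the stationary setting. The triplet structure of \eqref{Eq:MFGCompet} suggests parametrising the fixed point by the single variable $m$, since given $m$ the other two equations can be solved in cascade (first for $\theta_{\e,m}$, then for $u$).

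For $\nu>0$, fix $T>0$ and consider the convex set
\[
\mathcal{K}\;:=\;\bigl\{m\in C^0([0,T];\mathcal{P}(\T))\,:\,\|m\|_{L^\infty(\TT)}\leq R\bigr\}
\]
for a constant $R$ to be chosen large, equipped with the topology of uniform-in-time convergence in Wasserstein. Define a solution map $\Phi:\mathcal{K}\to \mathcal{K}$ in three steps. \textbf{Step 1:} given $m\in\mathcal{K}$, classical parabolic theory applied to \eqref{Eq:Main} with smooth source $\e(\rho\star m)$ produces a unique $\theta_{\e,m}\in \mathscr{C}^{1+\alpha/2,\,2+\alpha}(\TT)$, and the maximum principle gives $0\leq \theta_{\e,m}\leq \max(\|\theta_0\|_\infty,\|K\|_\infty)$. \textbf{Step 2:} the backward HJB \eqref{Eq:HJBCompet} with the smooth bounded source $\rho\star\theta_{\e,m}$ admits a classical solution $u$, uniformly bounded in $\mathscr{C}^{1+\alpha/2,\,2+\alpha}$; the $L^\infty$ bound is $\|u\|_\infty\leq T\|\rho\star\theta_{\e,m}\|_\infty$, and a Bernstein-type computation on $|\nabla u|^2$ (possible because the Hamiltonian is quadratic) yields a uniform $W^{1,\infty}$ bound. \textbf{Step 3:} the forward Fokker--Planck equation \eqref{Eq:FPKCompet} with smooth drift $\nabla u$ and initial datum $m_0$ has a unique classical nonnegative solution $\widetilde m$ with total mass $1$ and $\|\widetilde m\|_\infty$ controlled by $\|m_0\|_\infty$ and $\|\nabla u\|_\infty$. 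Set $\Phi(m):=\widetilde m$. Choosing $R$ large enough ensures $\Phi(\mathcal{K})\subset \mathcal{K}$; continuity of $\Phi$ follows from the continuous dependence of each sub-problem on its data, and the uniform Hölder bounds on $\widetilde m$ make $\Phi(\mathcal{K})$ relatively compact. Schauder's theorem then produces a fixed point, i.e.\ a solution of \eqref{Eq:MFGCompet}.

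For $\nu=0$, apply the above with $\nu_n\downarrow 0$ to obtain solutions $(u_{n},m_{n},\theta_{n})$. The Bernstein argument delivers a $\nu$-independent Lipschitz bound on $u_{n}$ (the quadratic Hamiltonian and $C^1$-in-space source are preserved uniformly), while $(m_{n})$ is tight in $C^0([0,T];\mathcal{P}(\T))$ and $(\theta_n)$ stays uniformly $\mathscr{C}^{1+\alpha/2,\,2+\alpha}$ since the diffusivity $\mu>0$ in \eqref{Eq:Main} is unchanged. Extracting along subsequences yields, in the limit, a viscosity solution of the HJB, a distributional solution of the FPK, and a classical solution of the reaction-diffusion equation. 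For the ergodic problem, the same strategy applies in the stationary category: given $\bar m\in \mathcal{P}(\T)$ one solves the elliptic logistic equation for $\bar\theta_{\e,\bar m}$ (using $\int_\T K-\e\,\rho\star\bar m>0$ for $\e$ small or for $\bar m$ controlled, cf.\ \eqref{H:K}), then the ergodic HJB which determines $(\bar\lambda,\bar u)$ uniquely up to the normalisation $\int_\T \bar u=0$ via the standard theory of viscous Hamilton--Jacobi equations on compact manifolds, and finally the stationary Kolmogorov equation whose invariant probability density exists by classical arguments. The corresponding map admits a fixed point by Schauder applied in $L^2(\T)$.

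The main obstacle is the non-standard time-space non-locality highlighted in Remark \ref{Rk:MFGclass}: $\theta_{\e,m}(t,\cdot)$ depends on the whole history $(m(s,\cdot))_{0\leq s\leq t}$, forbidding any time-sliced contraction and forcing a global-in-time fixed-point argument. Continuity of $m\mapsto \theta_{\e,m}$ in a sufficiently weak topology is what makes this work, and here the convolution by the smooth kernel $\rho$ is decisive: the map $m\mapsto \rho\star m$ is continuous from $C^0([0,T];\mathcal{P}(\T))$ endowed with the narrow topology into $C^0([0,T];\mathscr{C}^k(\T))$ for every $k$, which both supplies the compactness needed for Schauder in Step~1 and legitimises passage to the limit in the nonlinearity $\e(\rho\star m_n)\theta_n$ of \eqref{Eq:Main} during the vanishing-viscosity procedure. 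Apart from this subtlety, the scheme follows the usual MFG template.
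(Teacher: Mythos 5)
For the time-dependent system your argument is essentially the paper's: uniform a priori bounds on the cascade $m\mapsto\theta_{\e,m}\mapsto u\mapsto \widetilde m$ (Propositions \ref{Pr:UniformRegularityEstimates}--\ref{Pr:RegularityHJB}) followed by a Leray--Schauder fixed point for $\nu>0$, then vanishing viscosity with $\nu$-independent Lipschitz bounds for $\nu=0$. The only cosmetic difference is that you obtain the gradient bound on $u$ by a Bernstein computation on $|\n u|^2$, whereas the paper bounds $\partial^2_{\xi\xi}u$ from below via the maximum principle (semi-convexity) and uses that a semi-convex function on a compact domain is Lipschitz; both are standard for quadratic Hamiltonians and interchangeable here. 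One small caveat: your set $\mathcal K$ imposes $\|m\|_{L^\infty}\leq R$, which silently assumes $m_0\in L^\infty$; the paper works with $m_0\in\mathcal P(\T)$ and compactness in the Wasserstein topology, which is the right setting since no $L^\infty$ bound on $m$ is needed anywhere in the scheme.

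The genuine gap is in the ergodic system for $\nu=0$, which the theorem explicitly covers ($\nu\geq 0$). Your stationary cascade relies on three claims that fail without diffusion: (i) that the ergodic HJB ``determines $(\overline\lambda,\overline u)$ uniquely up to normalisation'' --- for the first-order equation $\overline\lambda-\frac{|\n\overline u|^2}2=\rho\star\overline\theta$ only the constant $\overline\lambda$ is unique, while correctors $\overline u$ are in general non-unique and merely Lipschitz/semi-concave; (ii) that the stationary Kolmogorov equation $\nabla\cdot(\overline m\,\n\overline u)=0$ has an ``invariant probability density by classical arguments'' --- with a drift $\n\overline u$ defined only a.e.\ there is no such density, and the invariant measure is typically singular, supported on the projected Mather set; and (iii) Schauder in $L^2(\T)$, which is unavailable for such measures. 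This is precisely why the paper abandons the PDE cascade in the first-order ergodic case and instead constructs $\overline m$ via weak KAM: for each $m$ one takes the set $E_m$ of flow-invariant measures on $\T\times\R^d$ for the Lagrangian $L_m$ (non-empty by Bogoliubov), minimises $\iint L_m\,d\eta$, pushes forward to $\T$, and applies a set-valued (Kakutani-type) fixed point to the convex, compact-graph map $m\mapsto C_m$; the ergodic constant is then recovered from the weak KAM formula \eqref{Eq:WeakKAM1}. Your proposal as written does not produce the pair $(\overline u,\overline m)$ in the first-order ergodic regime.
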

For the proof of the existence of solutions to the time-dependent system \eqref{Eq:MFGCompet},
we refer to Section \ref{subsec:exist_TD}, where we first  treat the second-order case $\nu>0$  through standard parabolic estimates. The first-order case $\nu=0$ is dealt with using a vanishing viscosity approach coupled with appropriate uniform (in the vanishing diffusivity $\nu$) estimates.
For \eqref{Eq:ErgodicMFG} in the second order case, we do not detail the proof as it is similar to the one in \cite{CardaliaguetLasryLionsPorretta}.
In the first-order case, we mimick the approach of \cite{CardaliaguetKAM},
see Section \ref{subsec:exist_KAM}. The reason we include this proof and not the one for the second-order case is that the proof of  \cite{CardaliaguetKAM} involves the weak KAM formula, which plays a role in the construction of explicit solutions, and this seems a natural place to introduce it.

While the existence of solutions follows from standard arguments, uniqueness does not. Overall, there are only very few structural assumptions that can guarantee uniqueness in MFG systems. The Lasry-Lions monotonicity condition \cite[Theorem 2.4]{LasryLions3} is probably the main one. Another standard way to overcome the possible lack of uniqueness is to make some smallness assumption, either on the time horizon $T$ or on the strength of the mean-field interaction which, in the present paper, is
quantified by $\e>0$. Such assumptions allow to use Gr\"{o}nwall-type lemmas to derive a contractivity property of the solution mapping, leading to the required uniqueness.
This ``small interaction" regime is very restrictive, as $\e$ is typically exponentially  small with respect to $T$ (\emph{i.e.} $\e\lesssim e^{-CT}$). In particular, it prohibits investigating the long-time behaviour of the system.
We also refer to section \ref{Se:Biblio} for more details on uniqueness issues in Mean Field Games.

Our uniqueness result is also shown under an assumption on the smallness of $\e$, but we obtain a smallness regime that is independent of the time horizon. Our strategy consists in showing that, under the assumption ``$\e>0$ is small enough", the Lasry-Lions monotonicity condition is satisfied. In particular, our strategy of proof creates a link between these two regimes, which are generally presented as distinct hypothesis \cite{Bardi_2019}.  This is crucial, as we later want to investigate the long-time behaviour of this system.

Our main theorem is the following:
\begin{theorem}
    \label{Th:UniquenessMFGCompet}
 There exists $\e_0>0$ and $\delta>0$ such that, if \ref{H:K} and \ref{H:Theta0} hold then, for any $\e\in (0;\e_0)$,
    there exists at most one solution to \eqref{Eq:MFGCompet} for any $T>0$
    (resp. \eqref{Eq:ErgodicMFG})
\end{theorem}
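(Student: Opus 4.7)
I will implement a Lasry-Lions monotonicity argument, tailored to the non-local-in-time coupling $m\mapsto\theta_{\varepsilon,m}$. Because the agents \emph{maximize} their profit and more fishermen at $x$ deplete $\theta(x)$, the coupling is crowd-averse, and the Lasry-Lions condition must hold with the opposite sign from the classical convention. Taking two solutions $(u_i,m_i,\theta_i)_{i=1,2}$ and computing $\frac{d}{dt}\!\int_\T(u_1-u_2)(m_1-m_2)$ via \eqref{Eq:MFGCompet}: the $\nu$-Laplacians cancel after integration by parts, and the quadratic Hamiltonian combines with the FPK transport term to produce the convex quantity $\tfrac12(m_1+m_2)|\nabla(u_1-u_2)|^2$. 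Integrating in time over $[0,T]$ and using the boundary conditions $u_i(T,\cdot)\equiv 0$, $m_i(0,\cdot)=m_0$, yields the identity
\[
\tfrac12\!\int_0^T\!\!\int_\T(m_1+m_2)|\nabla(u_1-u_2)|^2 \;=\; \int_0^T\!\!\int_\T\rho\star(\theta_1-\theta_2)(m_1-m_2) \;=:J.
\]
It suffices to prove $J\le 0$ for $\varepsilon$ small: both sides then vanish, $\nabla u_1=\nabla u_2$ on $\operatorname{supp}(m_1+m_2)$, and the cascade through the FPK (common drift and initial datum), the reaction-diffusion equation, and the HJB forces $m_1=m_2$, then $\theta_1=\theta_2$, and finally $u_1=u_2$.

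\emph{Sign of $J$.} Set $\eta=\theta_1-\theta_2$, $n=m_1-m_2$, $g=\rho\star n$. Subtracting the two reaction-diffusion equations gives
\[
\partial_t\eta-\mu\Delta\eta+\eta(\theta_1+\theta_2-K+\varepsilon\rho\star m_1)=-\varepsilon\,\theta_2\,g,\qquad \eta(0,\cdot)=0.
\]
At $\varepsilon=0$, assumption \ref{H:Theta0} forces $\theta_i\equiv\overline\theta_K$ and $\eta\equiv 0$, so $J=0$ trivially. For small $\varepsilon>0$ the principal linearized operator is $-\mu\Delta+(2\overline\theta_K-K)$, whose first eigenvalue is \emph{strictly positive}: this follows from the spectral discussion right after \eqref{Eq:Rayleigh}, since $\overline\theta_K>0$ is a principal eigenfunction of $-\mu\Delta-V_K$ at eigenvalue $0$ and adding the positive potential $\overline\theta_K$ shifts the spectrum upward. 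I would then use the classical ecological substitution $\eta=\overline\theta_K w$, conjugating this operator to the self-adjoint divergence-form operator $L_w:=-\tfrac{\mu}{\overline\theta_K^2}\nabla\!\cdot\!(\overline\theta_K^2\nabla\cdot)+\overline\theta_K$ (symmetric in $L^2(\overline\theta_K^2\,dx)$, with the same positive first eigenvalue). Testing the resulting equation for $w$ against $\overline\theta_K w$, integrating by parts in space using $-\mu\Delta\overline\theta_K=\overline\theta_K(K-\overline\theta_K)$, and integrating over $t\in[0,T]$, yields the identity
\[
\tfrac12\!\int_\T\!\overline\theta_K w(T)^2 + \mu\!\int_0^T\!\!\int_\T\!\overline\theta_K|\nabla w|^2 + \tfrac12\!\int_0^T\!\!\int_\T\!\overline\theta_K w^2(3\overline\theta_K-K) + O(\varepsilon)\|w\|_{L^2}^2 = -\varepsilon\!\int_0^T\!\!\int_\T g\,\theta_2\,w.
\]
Under \ref{H:Theta0}\ref{H:Theta0b}, $K$ being $\delta$-close to its mean $\bar K$ forces $\overline\theta_K$ close to $\bar K$, so $3\overline\theta_K-K\ge 2\bar K-C\delta>0$ for $\delta$ small; under \ref{H:Theta0}\ref{H:Theta0a} (dimension one), the analogous sign is delivered by the monotonicity tools of \cite{Mazari_2022}. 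Either way, for $\varepsilon<\varepsilon_0$ the left-hand side is non-negative, hence $\int g\theta_2 w\le 0$. Decomposing $J=\int\theta_2 wg+\int(\overline\theta_K-\theta_2)wg$, using $\|\overline\theta_K-\theta_2\|_\infty=O(\varepsilon)$ (uniformly in $T$, by the exponential stability \eqref{Eq:EldLongTime}), and combining with $J\ge 0$ to bootstrap the a priori weighted energy estimate $\|w\|_{L^2}\lesssim\varepsilon\|g\|_{L^2}$ into the improved bound $\|w\|_{L^2}\lesssim\varepsilon^2\|g\|_{L^2}$, one reaches $J\le C\varepsilon^3\|g\|_{L^2}^2\le C\varepsilon^3\|n\|_{L^2}^2$.

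\emph{Closing the loop and the ergodic case.} To turn this into uniqueness I pair the previous inequality with a $T$-independent bound $\|n\|_{L^2}^2\le C_{\mathrm{FPK}}\,J$, obtained through a Duhamel estimate on the FPK equation for $n$ (which has zero initial datum): splitting the transport flux as $n\nabla u_1+m_2\nabla(u_1-u_2)$, absorbing the self-interaction by the $T$-uniform $L^\infty$-bounds on $\nabla u_i$ (which are themselves $T$-independent since the HJB source $\rho\star\theta_i$ is uniformly bounded), and using the spectral gap of $-\nu\Delta$ on $\T$. The chain $J\le C\varepsilon^3 C_{\mathrm{FPK}}\,J$ then forces $J=0$ for $\varepsilon<\varepsilon_0$. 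The scheme for \eqref{Eq:ErgodicMFG} is identical with time integrals replaced by space integrals: the weighted identity becomes stationary, the $w(T)^2$ boundary term disappears, and the same spectral positivity of $L_w$ yields monotonicity.

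The \textbf{main obstacle} is the $T$-independence of $\varepsilon_0$. A naive Grönwall approach on the $\eta$-equation would only give $\varepsilon_0\lesssim e^{-CT}$, which would collapse the long-time analysis of Section~\ref{Se:KAM}. Circumventing this requires both (i) the strict positivity of the first eigenvalue of the linearized reaction-diffusion operator, which is why \ref{H:Theta0} pins $\theta_0=\overline\theta_K$ (the globally stable equilibrium), and (ii) the pointwise sign condition $3\overline\theta_K\ge K$, tailor-fit by the one-dimensional or near-constant dichotomy of \ref{H:Theta0}.
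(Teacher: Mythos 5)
Your overall architecture coincides with the paper's: the Lasry--Lions duality identity $\frac12\iint(m_1+m_2)|\n(u_1-u_2)|^2=J$ combined with a proof that the coupling $m\mapsto\rho\star\theta_{\e,m}$ is monotone for small $\e$, the monotonicity being reduced to the positivity of a weighted Schr\"odinger-type form built on the linearisation around $\overline\theta_K$, with the same external inputs (\cite{Mazari_2022}, \cite{BaiHeLi}) invoked at the same crucial point. Where you differ is the route to the monotonicity: the paper fixes $m_1$, studies the functional $g(m)=\iint(\rho\star\theta_{\e,m_1}-\rho\star\theta_{\e,m})d(m_1-m)$, and proves its strict concavity via second Gateaux derivatives and an adjoint state $p_{\e,m}$, the relevant weight being $\psi_{\e,m}=(1-\e p_{\e,m})/\theta_{\e,m}$; you instead run a direct energy estimate on $\eta=\theta_1-\theta_2$, which avoids the adjoint state and the Taylor expansion altogether. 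Your ground-state substitution $\eta=\overline\theta_K w$ is correct and is in fact a tidy reformulation: the resulting potential $\tfrac12\overline\theta_K(3\overline\theta_K-K)$ is exactly the conjugated version of the paper's Rayleigh quotient \eqref{Eq:Lambda0} (the $|\n\overline\theta_K|^2/\overline\theta_K^3$ terms cancel in the transform), so the two positivity conditions are equivalent, and your version makes the near-constant case \ref{H:Theta0}\ref{H:Theta0b} transparent. Note, however, that the pointwise sign $3\overline\theta_K\geq K$ is \emph{not} automatic in dimension one (only $\max K\geq\max\overline\theta_K$ is), so in case \ref{H:Theta0}\ref{H:Theta0a} you still need the spectral decomposition of \cite{Mazari_2022} together with \cite[Estimate 2.2]{BaiHeLi}, exactly as the paper does.

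The genuine gap is in how you close the argument. Because the source of the $\eta$-equation is $-\e\theta_2 g$ while $J=\iint\eta g$, the natural pairing is against $\eta/(\e\theta_2)$, which gives the \emph{exact} identity
\begin{equation*}
J=-\frac1\e\iint_{(0,T)\times\T}\frac{\eta}{\theta_2}\Bigl(\partial_t\eta-\mu\Delta\eta+W\eta\Bigr),\qquad W=\theta_1+\theta_2-K+\e\,\rho\star m_1 .
\end{equation*}
By the uniform $\mathscr C^2$ estimates of Proposition \ref{Pr:RegularityNonLocal}, this quadratic form is an $O(\e)$ perturbation (uniformly in $T$) of the coercive limiting form, so $J\leq -\frac{c}{\e}\iint\eta^2\leq 0$ directly, with no further input. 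By instead replacing the weight $1/\theta_2$ by $1/\overline\theta_K$ before pairing, you only reach $\iint g\,\theta_2 w\leq 0$, hence $J\leq C\e^2\|g\|_{L^2}^2$ rather than $J\leq 0$, and you are forced to invoke the reverse estimate $\|m_1-m_2\|_{L^2}^2\leq C_{\mathrm{FPK}}\,J$ with $C_{\mathrm{FPK}}$ independent of $T$. That estimate is not justified: for $\nu=0$ there is no spectral gap and $m_i$ are merely measures, so $\|m_1-m_2\|_{L^2}$ need not even be finite; for $\nu>0$, absorbing the transport term $\n\cdot(n\,\n u_1)$ costs a Gr\"onwall factor that reintroduces exponential dependence on $T$, which is precisely what the whole construction is designed to avoid. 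Two further points need repair: in the first-order case the duality identity cannot be obtained by naively integrating by parts (the paper's mollification argument \`a la \cite{CardaliaguetKAM} is required), and your final cascade should conclude $\theta_1=\theta_2$ first (from the quantified monotonicity), then $u_1=u_2$ by uniqueness for HJB, then $m_1=m_2$ by uniqueness for FPK; deducing $m_1=m_2$ from $\n u_1=\n u_2$ on $\mathrm{supp}(m_1+m_2)$ alone is more delicate than you suggest.
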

The proof of this theorem relies on the following monotonicity result, which also plays a crucial role in studying the long-time behaviour of \eqref{Eq:MFGCompet}:
\begin{proposition}\label{Pr:EpsilonMonotone}
    Assume \ref{H:K} and \ref{H:Theta0} hold
  and  let $\mu>0$. For a given $m\in\mathscr C^0(0,T;\mathcal P(\T))$ and $\e>0$, let $\theta_{\e,m}$ denotes the solution to \eqref{Eq:Main}.
    There exists $\e_0>0$ and $a>0$ such that,
    for any $\e\leq \e_0$,
    for any $T\in(0,\infty)$,
    the following monotonicity property holds:
    for any $m,m'\in \mathscr C^0([0,T];\mathcal P(\T))$,
\begin{equation}
    \label{Eq:Mono}
    \iint_{(0,T)\times \T} \Big(\rho \star \theta_{\e,m}-\rho\star\theta_{\e,m'}\Big)d (m-m')
    \leq -a\e \iint_\TT\left(\theta_{\e,m_1}-\theta_{\e,m}\right)^2.
\end{equation}
\end{proposition}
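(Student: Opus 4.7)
The key observation is that, under assumption \ref{H:Theta0}, the initial datum $\theta_0 = \overline\theta_K$ is a stationary solution of \eqref{Eq:Main} at $\varepsilon=0$, so $\theta_{0,m} \equiv \overline\theta_K$ regardless of $m$. Combining the exponential stability \eqref{Eq:EldLongTime} with a parabolic perturbation argument yields the expansion $\theta_{\varepsilon,m} = \overline\theta_K + \varepsilon\,\vartheta_{\varepsilon,m}$ with $\|\vartheta_{\varepsilon,m}\|_{L^\infty}$ bounded uniformly in $\varepsilon\in(0,\varepsilon_0)$ and $T>0$. Setting $\eta := \theta_{\varepsilon,m} - \theta_{\varepsilon,m'} = \varepsilon\xi_\varepsilon$, with $\xi_\varepsilon := \vartheta_{\varepsilon,m} - \vartheta_{\varepsilon,m'}$ and $g := \rho\star(m-m')$, the target inequality \eqref{Eq:Mono} becomes $\iint g\,\xi_\varepsilon \leq -a\varepsilon^2\iint\xi_\varepsilon^2$. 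In fact, I aim at the much stronger bound $\iint g\,\xi_\varepsilon \leq -c_0\iint\xi_\varepsilon^2$ for some $c_0>0$ independent of $\varepsilon$: the $\varepsilon^2$ prefactor in the statement leaves ample slack to absorb perturbative errors.

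At $\varepsilon=0$, the limit $\xi := \lim_{\varepsilon\to 0}\xi_\varepsilon$ solves the linearised equation
\[
\partial_t \xi + \mathcal{L}\xi = -g\,\overline\theta_K, \qquad \xi(0,\cdot) = 0, \qquad \mathcal{L} := -\mu\Delta - (K - 2\overline\theta_K).
\]
Multiplying by $\xi/\overline\theta_K$, integrating over $(0,T)\times\T$, and using the identity $-\mu\Delta\overline\theta_K = \overline\theta_K(K - \overline\theta_K)$ through several integrations by parts (with $\xi(0)=0$ killing the boundary term), one arrives at the explicit formula
\[
\iint g\,\xi = -\tfrac{1}{2}\int_\T \frac{\xi(T,\cdot)^2}{\overline\theta_K}\,dx \;-\; \mu\iint \overline\theta_K\,|\nabla\psi|^2 \;-\; \tfrac{1}{2}\iint \overline\theta_K\,\psi^2\,(3\overline\theta_K - K),
\]
with $\psi := \xi/\overline\theta_K$. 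All three terms on the right are non-positive as soon as $3\overline\theta_K \geq K$ pointwise, and the third one yields the coercive bound $\iint g\,\xi \leq -c_0\iint\xi^2$ whenever $3\overline\theta_K - K$ is bounded below by a positive constant.

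The pointwise inequality $3\overline\theta_K > K$ on $\T$ is verified in both cases of \ref{H:Theta0}. Under \ref{H:Theta0b}, $K$ is $\delta$-close in $L^\infty$ to its mean $\int_\T K>0$, so by continuity of the map $K\mapsto\overline\theta_K$ the function $\overline\theta_K$ is close to the same mean, forcing $3\overline\theta_K - K \approx 2\int_\T K > 0$; under \ref{H:Theta0a}, the inequality is a consequence of the fine monotonicity properties of the one-dimensional steady state derived in our earlier work \cite{Mazari_2022}. Finally, to pass from $\varepsilon=0$ to $\varepsilon>0$ small, a standard $L^2$ energy estimate relying on the coercivity of $\partial_t + \mathcal{L}$ (uniform in $T$ thanks to $\lambda_1(\mu,V_K)=0$) gives $\|\xi_\varepsilon - \xi\|_{L^2((0,T)\times\T)} = \mathcal{O}(\varepsilon)$, so that $\iint g\,\xi_\varepsilon \leq -c_0\iint\xi_\varepsilon^2 + \mathcal{O}(\varepsilon)$ — comfortably stronger than the target. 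The main obstacle is therefore the coercivity step under \ref{H:Theta0a}, which is genuinely delicate and rests crucially on the one-dimensional analysis of \cite{Mazari_2022}.
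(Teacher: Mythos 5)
Your linearisation-in-$\e$ strategy is sound up to a point, and in fact your quadratic form is \emph{exactly} the paper's: substituting $u=\psi\,\overline\theta_K$ into the limiting Rayleigh quotient \eqref{Eq:Lambda0} and integrating by parts with $-\mu\Delta\overline\theta_K=\overline\theta_K(K-\overline\theta_K)$ turns $\mu\int_\T\overline\theta_K^{-1}|\n u|^2-\int_\T u^2\bigl(\Phi_0+\tfrac14\mu|\n\overline\theta_K|^2\overline\theta_K^{-3}-1\bigr)$ into $\mu\int_\T\overline\theta_K|\n\psi|^2+\tfrac12\int_\T\overline\theta_K\psi^2(3\overline\theta_K-K)$, which is precisely your expression. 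So your route (differentiating in $\e$ and testing against $\xi/\overline\theta_K$) is a legitimate, and arguably lighter, alternative to the paper's adjoint-state/second-Gateaux-derivative computation. The genuine gap is in how you conclude coercivity: you discard the gradient term $-\mu\iint\overline\theta_K|\n\psi|^2$ and demand the \emph{pointwise} inequality $3\overline\theta_K-K\geq c>0$. Under \ref{H:Theta0b} this is fine, but under \ref{H:Theta0a} it is false in general: for $d=1$, fixed $\mu$ large and a peaked $K$ with $\max_\T K>3\int_\T K$, the steady state satisfies $\overline\theta_K\approx\int_\T K$, so $3\overline\theta_K-K<0$ near the peak. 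Neither \cite{Mazari_2022} nor \cite{BaiHeLi} gives you this sign condition; what they give is (i) the spectral identity $\min\bigl\{\mu\int_\T\overline\theta_K^{-1}|\n u|^2-\int_\T u^2\Phi_0\bigr\}=0$ (the minimiser being essentially $\overline\theta_K^{3/2}$) and (ii) the gradient bound $\tfrac14\mu|\n\overline\theta_K|^2\overline\theta_K^{-3}\leq\tfrac16$. Coercivity of the full form is a spectral statement in which the gradient term is indispensable to penalise concentration of $\psi$ where $3\overline\theta_K-K$ is negative; your argument cannot close case \ref{H:Theta0a} as written.

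Two secondary issues. First, your perturbative step yields an \emph{additive} error, $\iint g\,\xi_\e\leq-c_0\iint\xi_\e^2+\mathcal O(\e)$; when $m$ and $m'$ are close, $\iint\xi_\e^2$ can be arbitrarily small compared with $\e$, so this does not imply $\iint g\,\xi_\e\leq-a\e^2\iint\xi_\e^2$. You need the error to be multiplicative, e.g.\ $\mathcal O(\e)\cdot\iint\xi_\e^2$ (this is why the paper works with a genuine second-derivative/Taylor expansion along the segment $\xi m+(1-\xi)m'$, and why \eqref{Eq:ConvergencePsi} is stated in $\mathscr C^2$ rather than $L^2$). Second, the uniformity in $T$ of your energy estimate rests on $\lambda_1(\mu,K-2\overline\theta_K)>0$, not on $\lambda_1(\mu,V_K)=0$ as you write; the latter eigenvalue is exactly zero and provides no decay.
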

Note that simply using the fact that the right-hand side of \eqref{Eq:Mono} is negative for $m\neq m'$ yieldsthe well-known Lasry-Lions monotonicity condition, which is  sufficient to obtain uniqueness for \eqref{Eq:MFGCompet} for a fixed time horizon $T$.
However, to obtain the long-time convergence to \eqref{Eq:ErgodicMFG}, the quantified estimate \eqref{Eq:Mono} is necessary, as in \cite{CardaliaguetKAM},  see Section \ref{Se:Biblio} for more details.

Before we conclude this section, let us point out that we do not expect
our uniqueness results in Theorem~\ref{Th:UniquenessMFGCompet} to be sharp. 
Indeed, at least in dimension one and without the convolution
kernel $\rho$, we are able to prove uniqueness under milder assumptions,
as stated in the following theorem.

\begin{theorem}\label{Th:Ergodic1D}
    Let $d=1$ and identify $\mathbb T$ with  the open interval $(-0.5,0.5)$ endowed
    with periodic boundary conditions.
    Let $K\in C^1(\mathbb T)$  be even and such that $\partial_xK<0$ on $(0,0.5)$.
    Consider the system
\begin{equation}\label{Eq:Ergodic1D}\tag{$\mathbf{MFG}_{\text{ergo,1D}}$}
\begin{cases}
    \overline \lambda-\frac{(\overline u')^2}2=\overline\theta\,, 
\\  (\overline mu')'=0\,, 
\\ -\mu\overline\theta''=\overline\theta(K-\e \overline m-\overline\theta)\,, 
\\ \overline\theta\geq 0,\; \overline\theta\not\equiv 0,\;
    \int_{\mathbb{T}}\overline u=0\,, \overline m\geq 0\,, \int_{\mathbb T} \overline m=1.
\end{cases}
\end{equation}
    If $\e\in (0,\int_{\mathbb{T}} K)$,
    there exists a unique solution $(\overline \lambda,\overline\theta,\overline u,\overline m)$ of \eqref{Eq:Ergodic1D}
    with $\overline\theta\neq 0$.
    If $\e>\int_{\mathbb{T}} K$, any solution of \eqref{Eq:Ergodic1D} satisfies $\overline\theta=0$ a.e. in $\mathbb{T}$.
\end{theorem}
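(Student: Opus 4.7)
My plan is to exploit the rigid one-dimensional first-order structure together with the evenness of $K$ to reduce the system to an explicit algebraic/ODE problem in two scalar unknowns, which can then be analysed case by case.

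\textbf{Step 1: Structural reduction.} Testing the Fokker--Planck equation $(\overline m\,\overline u')'=0$ against $\overline u$ and integrating by parts on $\mathbb T$ yields $\int_{\mathbb T}\overline m(\overline u')^2=0$, hence $\overline u'=0$ on $\mathrm{supp}(\overline m)$. The HJB equation then forces $\overline\theta\equiv\overline\lambda$ on $\mathrm{supp}(\overline m)$ and $\overline\theta\leq\overline\lambda$ everywhere. Integrating HJB against $\overline m$ gives the key identity $\int_{\mathbb T}\overline m\,\overline\theta=\overline\lambda$. On the interior of $\mathrm{supp}(\overline m)$, $\overline\theta$ is constant so $\overline\theta''=0$, and the reaction--diffusion equation reduces to the algebraic identity $\overline m=(K-\overline\lambda)/\varepsilon$, provided $\overline\lambda>0$; the latter is automatic whenever $\overline\theta\not\equiv 0$ by the strong maximum principle applied to the linear equation $-\mu\overline\theta''-(K-\varepsilon\overline m-\overline\theta)\overline\theta=0$, which yields $\overline\theta>0$ on $\mathbb T$ and so $\overline\lambda\geq\max\overline\theta>0$.

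\textbf{Step 2: The case $\varepsilon>\int_{\mathbb T}K$.} Assume for contradiction that $\overline\theta\not\equiv 0$. Integrating the reaction--diffusion equation over $\mathbb T$ and using $\int_{\mathbb T}\overline m\,\overline\theta=\overline\lambda$ yields $\int_{\mathbb T}\overline\theta K=\varepsilon\overline\lambda+\int_{\mathbb T}\overline\theta^{\,2}$. Combined with $\overline\theta\leq\overline\lambda$ and $\overline\lambda>0$, this gives $\overline\lambda\int_{\mathbb T}K\geq\varepsilon\overline\lambda+\int_{\mathbb T}\overline\theta^{\,2}\geq\varepsilon\overline\lambda$, hence $\int_{\mathbb T}K\geq\varepsilon$, contradicting the assumption; therefore $\overline\theta\equiv 0$.

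\textbf{Step 3: The case $\varepsilon\in(0,\int_{\mathbb T}K)$.} Since $K$ is even and strictly decreasing on $(0,1/2)$, the super-level set $\{K\geq\overline\lambda\}$ is either all of $\mathbb T$ (if $\overline\lambda\leq\min K$) or a symmetric closed interval $[-x^*,x^*]$ with $K(x^*)=\overline\lambda$. By Step 1, $\mathrm{supp}(\overline m)\subseteq\{K\geq\overline\lambda\}$. One looks for symmetric solutions with $\mathrm{supp}(\overline m)=[-x^*,x^*]$: on this set $\overline m=(K-\overline\lambda)/\varepsilon$ and $\overline\theta\equiv\overline\lambda$, while on the complementary arc $[x^*,1/2]$ (and its symmetric image) one has $\overline m\equiv 0$ and $\overline\theta$ solves the ODE $-\mu\overline\theta''=\overline\theta(K-\overline\theta)$ with Cauchy data $\overline\theta(x^*)=\overline\lambda$, $\overline\theta'(x^*)=0$, plus the symmetry/terminal condition $\overline\theta'(1/2)=0$. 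The two remaining scalar conditions, namely the normalisation $\int_{-x^*}^{x^*}(K-\overline\lambda)\,dx=\varepsilon$ and the shooting condition $\overline\theta'(1/2)=0$, then determine the pair $(\overline\lambda,x^*)$. If $\varepsilon\in[\int_{\mathbb T}K-\min K,\int_{\mathbb T}K)$, one takes $x^*=1/2$ and obtains the explicit solution $\overline\lambda=\int_{\mathbb T}K-\varepsilon$, $\overline u\equiv 0$, $\overline\theta\equiv\overline\lambda$; if $\varepsilon\in(0,\int_{\mathbb T}K-\min K)$, one has $x^*<1/2$ and existence and uniqueness of $(\overline\lambda,x^*)$ follow from a shooting argument combined with the strict monotonicity in $(\overline\lambda,x^*)$ of the normalisation map, based on the strict decay of $K$ on $(0,1/2)$.

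The main obstacle lies in Step 3, specifically in ruling out ``non-standard'' solutions where $\mathrm{supp}(\overline m)$ is not a single symmetric interval (for instance, an even ``doughnut'' $[-x^*,-y^*]\cup[y^*,x^*]$ with a gap around the origin). The pseudo-energy $E(x)=\mu(\overline\theta')^2/2+K(x)\overline\theta^2/2-\overline\theta^3/3$ satisfies $E'=K'\overline\theta^2/2$, but on an even interval around the origin the odd function $K'$ makes $\int K'\overline\theta^2$ vanish, so a naive energy argument does not immediately forbid a symmetric gap; ruling such gap solutions out requires a finer phase-plane or monotonicity analysis using the strict decay of $K$ on $(0,1/2)$. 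Analogous ODE-comparison care is needed to rigorously establish strict monotonicity of the shooting defect in the regime $x^*<1/2$ and to handle the continuous matching of the two regimes at the critical value $\varepsilon=\int_{\mathbb T}K-\min K$.
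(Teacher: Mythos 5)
Your overall architecture coincides with the paper's: the reduction $\overline u'=0$ on $\mathrm{supp}(\overline m)$, hence $\overline\theta\equiv\overline\lambda=\Vert\overline\theta\Vert_{L^\infty}$ on the support and $\e\overline m=K-\overline\lambda$ on its interior; the dichotomy between $\e\geq\int_{\mathbb{T}}K-K(0.5)$ (constant plateau filling the torus) and $\e<\int_{\mathbb{T}}K-K(0.5)$ (plateau plus decaying tail); and a scalar normalisation identity fixing $(\overline\lambda,x^*)$. The difficulty is that the two points you explicitly defer --- ruling out disconnected or off-centre supports, and the strict monotonicity needed to close the shooting argument --- are precisely the substance of the paper's proof, and they are not routine. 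The paper resolves both with a single comparison lemma (Lemma \ref{Le:Rearrangement}): for the auxiliary Neumann problem $-\mu\theta_y''=\theta_y(K-\theta_y)$ on $(y,0.5)$ with $\theta_y'(y)=\theta_y'(0.5)=0$, the map $y\mapsto\theta_y(y)$ is strictly decreasing. The argument is not a phase-plane or energy one (which, as you correctly observe, is obstructed by the oddness of $K'$ on a symmetric gap) but a spectral one: if $y_1<y_2$ and $\theta_{y_1}(y_1)\leq\theta_{y_2}(y_2)$, then $\p:=\theta_{y_1}-\theta_{y_2}$ satisfies $-\mu\p''-\p(K-\theta_{y_1}-\theta_{y_2})=0$ with $\p'(y_2)<0$, and testing against the negative part $\p_-$ contradicts the positivity of the first eigenvalue of $-\mu\partial^2_{xx}-(K-\theta_{y_1}-\theta_{y_2})$. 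Applied with a gap endpoint $b$ in place of $0.5$, this forces $\overline\theta(b)<\overline\theta(a)$ across any gap $(a,b)$ of $\mathrm{supp}(\overline m)\cap[0,0.5]$, contradicting $\overline\theta(a)=\overline\theta(b)=\Vert\overline\theta\Vert_{L^\infty}$; the same mechanism shows $0\in\mathrm{supp}(\overline m)$ and that $y\mapsto\int_0^yK-y\theta_y(y)$ is strictly increasing, which is exactly the monotonicity your normalisation map needs. Without this lemma your Step 3 does not close, so the proposal as written has a genuine gap at its central step.

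Two smaller points. First, $\overline m$ is a priori only a probability measure; before writing $\e\overline m=K-\overline\lambda$ pointwise and treating $\overline\theta$ as $\mathscr C^1$ one must exclude atoms and supports with empty interior --- the paper's Lemma \ref{Le:NoAtoms} does this by showing an atom at $x^*$ would produce a positive jump $[\overline\theta'](x^*)>0$, incompatible with $\overline\theta(x^*)=\Vert\overline\theta\Vert_{L^\infty}$. Second, your Step 2 inequality $\overline\lambda\int_{\mathbb{T}}K\geq\int_{\mathbb{T}}\overline\theta K$ amounts to $\int_{\mathbb{T}}(\overline\lambda-\overline\theta)K\geq0$, where $\overline\lambda-\overline\theta$ is largest exactly where $K$ is smallest; this is only justified if $K\geq0$. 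The paper's route through the support structure carries an analogous implicit sign assumption, so this is a shared looseness rather than a defect specific to your argument, but your Step 2 is otherwise a pleasantly more direct derivation of the extinction threshold than the paper's, since it bypasses the support analysis entirely.
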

One should note that under the assumptions of this last theorem, the equation defining $\overline \theta$
    is well-posed for any probability measure $m\in \mathcal P(\mathbb{T})$
such that $\lambda_1(K-m,\mu)<0$.
    This is due to the Sobolev embedding $W^{1,2}(\mathbb{T})\hookrightarrow \mathscr C^0(\mathbb{T})$
    and the characterisation
    of $\overline{\theta}$ as the (unique) solution of 
\[ \min_{\theta\in W^{1,2}(\T)\,,\theta\geq 0}\frac{\mu}2\int_\T (\partial_x\theta)^2-\frac12\int_\T (K-\e m)\theta^2+\frac13\int_\T \theta^3.\]
    To interpret Theorem \ref{Th:Ergodic1D}, it is important to note that the condition
    $\int_T \Kt<0$ (where $\Kt$ stands for $K-\e m$ here)
    is not in itself sufficient to guarantee that $\overline\theta_{\Kt}=0$,
    where $\overline\theta_{\Kt}$ solves \eqref{Eq:ELD};
    we refer to classical results \cite{BHR}.
    Consequently, the last sentence of Theorem \ref{Th:Ergodic1D} can be rewritten as:
    when the fishing pressure exceeds a fixed threshold,
 	the competition between fishermen  leads to an extinction of the fishes,
    and each agent, in this situation, has a zero outcome.
    However, even above the aforementioned threshold,
    it is easy to construct strategies (that are not individually optimal) where each fisherman obtains a positive outcome, and the total fishes' population survives in the long run.

    Those considerations make Theorem \ref{Th:Ergodic1D}  
    an instance of the \emph{tragedy of the commons} in the following sense:    greedy individual strategies may lead to the extinction of
    the species,
    while coordination between fishermen may ensure survival of the species and better outcomes for every fisherman involved.

\subsection{Long-time behaviour of \eqref{Eq:MFGCompet} and convergence to \eqref{Eq:ErgodicMFG}}\label{Se:KAM}
Let us now pass to the study of 
the long-time behaviour of solutions to \eqref{Eq:MFGCompet}.
\begin{theorem}\label{Th:ErgodicMFG2nd}
    Assume \ref{H:K}, \ref{H:Theta0} hold and that $\e\in(0,\e_0)$,
    where $\e_0$ is given in Theorem \ref{Th:UniquenessMFGCompet}.
    Take 
    $(\overline \lambda,\overline\theta_{\e,\overline m},\overline m,\overline V)$
    and $(\theta_{\e,m}^T,m^T\,, u^T)$ as the unique solutions of \eqref{Eq:ErgodicMFG}
    and \eqref{Eq:MFGCompet} respectively, for $T>0$.
    Define the following rescaled functions:
    for $(s,x)\in [0,1]\times \T$,
    \begin{equation}
        \label{eq:def_rescaled}
        \Theta^T(s,x):=\theta^T(sT,x),\;
        M^T(s,x):=m(sT,x),\;
        W^T(s,x):=u^T(sT,x)
    \end{equation}
Then there holds:
\begin{enumerate}
\item $\Theta^T\underset{T\to\infty}\rightarrow \overline \theta_{\e,\overline m}$ strongly in $L^2(0,1;L^2(\T))$ and, in fact:
\[\Vert\Theta^T-\overline\theta_{\e,\overline m}\Vert_{L^2((0,1)\times\T)}^2\leq\frac{C}{T}\]for some constant $C$.
\item $W^T/\overline\lambda \underset{T \to \infty}\rightarrow \overline u$ strongly in $L^\infty(0,1;L^\infty(\T))$ and, in fact
\[ \Vert \frac{W^T}T-\overline\lambda(1-s)\Vert_{L^\infty(\TT)}\leq\frac{C}{\sqrt{T}}.\]
\end{enumerate}
\end{theorem}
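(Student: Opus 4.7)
The strategy is to adapt the duality/weak KAM argument of \cite{CardaliaguetKAM} to our setting, with the novelty being the handling of the time-and-space nonlocal coupling through the quantitative monotonicity of Proposition~\ref{Pr:EpsilonMonotone}. Write $\bar\theta:=\bar\theta_{\e,\bar m}$ for brevity and define the corrector
\[ \xi^T(t,x):=u^T(t,x)-(T-t)\overline\lambda-\overline u(x), \qquad (t,x)\in[0,T]\times\T.\]
A short computation, using the quadratic identity $\tfrac12|\nabla u^T|^2-\tfrac12|\nabla\overline u|^2=\nabla\overline u\cdot\nabla\xi^T+\tfrac12|\nabla\xi^T|^2$, shows that $\xi^T$ solves
\[ -\partial_t\xi^T-\nu\Delta\xi^T-\nabla\overline u\cdot\nabla\xi^T-\tfrac12|\nabla\xi^T|^2=\rho\star(\theta^T-\overline\theta),\qquad \xi^T(T,\cdot)=-\overline u.\]

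\paragraph*{Step 1: duality identity.} Subtracting the HJB equations for $u^T$ and for $(T-t)\overline\lambda+\overline u$, multiplying by $m^T-\overline m$, and simultaneously multiplying the FPK equations for $m^T$ and $\overline m$ by $\xi^T$, then integrating by parts in time and space, the mixed terms cancel thanks to the quadratic Hamiltonian structure and one arrives at
\[ \tfrac12\iint_{\TT}(m^T+\overline m)\,|\nabla u^T-\nabla\overline u|^2 \;=\; \iint_{\TT}\bigl(\rho\star\theta^T-\rho\star\overline\theta\bigr)\,d(m^T-\overline m)\;+\;\mathcal B_T,\]
where $\mathcal B_T$ collects the boundary terms at $t=0$ and $t=T$. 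A comparison-principle argument for the HJB equation (relying on the fact that $\rho\star\theta^T$ is bounded uniformly in $T$, which in turn follows from $\overline\theta_{\Kt}\leq \|K\|_\infty$ applied to $\Kt=K-\e(\rho\star m)$) yields $\|u^T-(T-t)\overline\lambda\|_{L^\infty(\TT)}\leq C$ independently of $T$, so that $\|\xi^T\|_{L^\infty}\leq C$ and hence $|\mathcal B_T|\leq C$ uniformly in $T$.

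\paragraph*{Step 2: applying the monotonicity estimate.} Injecting Proposition~\ref{Pr:EpsilonMonotone} in the right-hand side of the duality identity gives
\[ a\varepsilon\iint_{\TT}(\theta^T-\overline\theta)^2+\tfrac12\iint_{\TT}(m^T+\overline m)|\nabla u^T-\nabla\overline u|^2\;\leq\; C,\]
uniformly in $T$. Rescaling $s=t/T$ turns the left-most integral into $T\int_0^1\!\!\int_\T(\Theta^T-\overline\theta)^2$, so $\|\Theta^T-\overline\theta\|_{L^2((0,1)\times\T)}^2\leq C/(a\varepsilon T)$, which is the first assertion.

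\paragraph*{Step 3: $L^\infty$ control of $W^T/T$.} Rewrite the equation for $\xi^T$ as an HJB equation with source $f^T:=\rho\star(\theta^T-\overline\theta)$, terminal data $-\overline u$, drift $-\nabla\overline u$, and quadratic Hamiltonian in $\nabla\xi^T$. Using the stochastic representation along optimal trajectories (for $\nu>0$; the case $\nu=0$ is obtained by the vanishing viscosity scheme of Section~\ref{subsec:exist_TD}) together with Cauchy--Schwarz in time,
\[ |\xi^T(t,x)|\leq \|\overline u\|_\infty+\mathbb E\!\left[\int_t^T|f^T(s,X_s)|\,ds\right]\leq \|\overline u\|_\infty+\sqrt{T-t}\,\Bigl(\iint |f^T|^2\Bigr)^{1/2}\leq C\sqrt{T},\]
using the $L^2$ bound of Step 2 and the Sobolev regularity of $\rho$. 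Dividing by $T$ and re-expressing in the $(s,x)$ variables yields the claimed $C/\sqrt{T}$ rate on $W^T/T-\overline\lambda(1-s)$.

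\paragraph*{Main obstacle.} The principal difficulty is the first step: the usual MFG duality identities rest on a pointwise-in-time coupling $m(t)\mapsto F(m(t))$, whereas here $\theta_{\e,m}(t,\cdot)$ depends on the entire history $(m(s,\cdot))_{s\leq t}$ (cf.\ Remark~\ref{Rk:MFGclass}). This is precisely the reason we need the quantified form of the monotonicity in Proposition~\ref{Pr:EpsilonMonotone}, which integrates the history through the $\theta$-equation. A second, more technical, point is the uniform-in-$T$ comparison estimate $\|u^T-(T-t)\overline\lambda\|_\infty\leq C$, which has to be proved by bracketing $u^T$ between sub- and super-solutions constructed from $\overline u\pm C$, taking advantage of the uniform $L^\infty$ bound on the coupling provided by the exponential stability \eqref{Eq:EldLongTime}.
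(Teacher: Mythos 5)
Your overall route---the Lasry--Lions duality identity combined with the quantified monotonicity of Proposition \ref{Pr:EpsilonMonotone} to get the $L^2$ rate on $\Theta^T$, then a comparison estimate for the Hamilton--Jacobi equation plus Cauchy--Schwarz to get the $L^\infty$ rate on $W^T/T$---is the same as the paper's. However, three steps are not justified as written.

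First, Proposition \ref{Pr:EpsilonMonotone} compares $\theta_{\e,m}$ and $\theta_{\e,m'}$, both solutions of the \emph{evolution} problem \eqref{Eq:Main} with the \emph{same} initial datum $\theta_0$. The stationary state $\overline\theta_{\e,\overline m}$ of \eqref{Eq:ErgodicMFG} is not of this form, so the inequality $a\e\iint(\theta^T-\overline\theta)^2\leq C$ in your Step 2 does not follow from the proposition. The paper's device is precisely to introduce the auxiliary evolution solution $\overline\theta^T$ driven by the fixed measure $\overline m$ and started at $\theta_0$, to apply the monotonicity to the admissible pair $(\theta^T,\overline\theta^T)$, and to absorb the discrepancy $\overline\theta^T-\overline\theta_{\e,\overline m}$ through the exponential stability \eqref{Eq:EldLongTime}, which makes its contribution $O(1)$ after integration over $(0,T)$. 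This is a routine repair, but it is a missing step rather than a stylistic difference.

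Second, the uniform bound $\Vert u^T-(T-t)\overline\lambda\Vert_{L^\infty}\leq C$ that you claim in Step 1 is not available at that stage: bracketing $u^T$ between $\overline u\pm C$ plus $(T-t)\overline\lambda$ would require $\int_0^T\Vert\rho\star(\theta^T-\overline\theta)(t)\Vert_{L^\infty(\T)}dt$ to be bounded uniformly in $T$, which you only obtain afterwards, and only at the rate $\sqrt T$ (indeed the theorem itself only yields $u^T-(T-t)\overline\lambda=O(\sqrt T)$). Fortunately you do not need it: since $\int_\T m^T(t)=\int_\T\overline m$ for every $t$, only the spatial oscillation of $u^T(0,\cdot)$ enters the boundary terms $\mathcal B_T$, and that oscillation is controlled by the uniform Lipschitz bound of Proposition \ref{Pr:UniformRegularityEstimates}; this is how the paper bounds $\mathcal B_T$. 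Third, in Step 3 the inequality $\mathbb E\bigl[\int_t^T|f^T(s,X_s)|ds\bigr]\leq\sqrt{T-t}\,\bigl(\iint|f^T|^2\bigr)^{1/2}$ conflates the law of $X_s$ with Lebesgue measure (for $\nu=0$ this law may be a Dirac mass); the correct route is to first write $|f^T(s,X_s)|\leq\Vert\rho\Vert_{L^2(\T)}\Vert(\theta^T-\overline\theta)(s,\cdot)\Vert_{L^2(\T)}$ and then apply Cauchy--Schwarz in time, which is what the paper does and yields the claimed $C\sqrt T$ bound.
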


\subsection{Bibliography and related works}\label{Se:Biblio}
\paragraph{Mean Field Games}
MFG were introduced independently by Lasry \& Lions \cite{LasryLions1,LasryLions2,LasryLions3} and by Caines, Huang \& Malham\'e \cite{Malhame2,Malhame1} in order to model and analyse game theoretical situations in the presence of a large number of agents. The bulk of the theory was developed by Lions in his series of lectures at Coll\`ege de France; we refer to the lecture notes of Cardaliaguet \cite{CardaliaguetCdF}, as well as to the self-contained lectures of Cardaliaguet \& Porretta
\cite{Cardaliaguet_2020}. The interested reader might profit from consulting the volume \cite{Achdou_2020}.  A core difficulty in the study of MFG is the identification of regimes of uniqueness for the systems. As we briefly sketched in the introduction, there are essentially two regimes for which it is well-known that uniqueness holds. The first one is the monotonous setting, while the second one is the small interaction case. Lasry \& Lions \cite[pp. 683]{LasryLions2} already observed, by drawing a connection with the Euler equation in the one-dimensional case, that uniqueness could fail. Bardi \& Fischer \cite{Bardi_2019} constructed a variety of explicit counter-examples to uniqueness by picking suitable Hamiltonians and ensuring that the coupling between the HJB equation and the FPP equation was ``non-monotonous" enough. We also refer to the recent \cite{zbMATH07729952} by Alpar \& M\'esz\'aros for a comprehensive discussion of (non-)uniqueness issues. Achdou \& Kobeissi \cite{achdou2021mean} computed multiple numerical solutions of a discretised MFG system,
conjecturing that non-uniqueness should hold when the discretisation step goes to zero.
In our case, we were not able to obtain a non-uniqueness regime. We leave this as an interesting and challenging open problem. 

\paragraph{Long-time behaviour of Hamilton-Jacobi equations, time asymptotics for MFG systems and comparison with \cite{CardaliaguetKAM}}
One of the core results of the present paper, the long-time behaviour of \eqref{Eq:MFGCompet}, fits into a general line of research on the long-time behaviour of viscosity solutions of Hamilton-Jacobi equations in general, and of solutions of MFG systems in particular. The time asymptotics of Hamilton-Jacobi equations have been the subject of an intense research activity since the 80's. Let us mention the foundational paper of Kru\v zkov \cite{Kruzkov} for quasilinear equations, and the contribution of Dafermos \cite{Dafermos} for one-dimensional conservation laws. For general Hamilton-Jacobi problems, we refer in particular to the first results obtained by Lions in \cite[Section 14.2]{LionsGeneralized}; this provided the impetus for the works of (among others) Lions, Papanicolaou \& Varadhan \cite{LionsPapanicolaouVaradhan}, Barles \cite{Barles1985} and the class of general results that were later obtained  by Fathi \cite{Fathi_1998} and Namah \& Roquejoffre \cite{Namah_1999}. The results of Fathi pointed to a deep connection with the theory of dynamical systems, particularly through the weak KAM theory already developed by Fathi in \cite{FATHI19971043}. In \cite{Barles_2000}, these results were generalised by Barles \& Souganidis using different methods. Regarding the weak KAM theory, we refer to \cite{E1999AUBRYMATHERTA,Evans_2008,FATHI19971043}.  As for the time asymptotics of MFG systems, to the best of our knowledge, the first contribution was the work of Cardaliaguet, Lasry, Lions \& Porretta \cite{CardaliaguetLasryLionsPorretta} in the case of second-order MFG (that is, for stochastic control problems). In the second order setting, the diffusive part of the system makes it so that the Lasry-Lions condition is sufficient to obtain long time convergence of the MFG system to the associated ergodic system. A quantified Lasry-Lions monotonicity conditions provides stronger (and explicit) convergence rates, and no quadraticity of the Hamiltonian is required to obtain the result. In the first order case, which is more intimately linked to the questions under consideration here, the paper of Cardaliaguet \cite{CardaliaguetKAM} is the first to consider the long-time behaviour under two key assumptions. The first one is that the Hamiltonian is essentially quadratic (\emph{i.e.} its Hessian is uniformly bounded from below and from above by two fixed positive definite matrices). The second one, which is more problematic, is that \cite{CardaliaguetKAM} requires a quantified monotonicity condition. To be more precise, the system under consideration in \cite{CardaliaguetKAM} writes
\[ \begin{cases}-\partial_t u-H(\n_x u)=F[m]\,, u(T)=0,
\\ \partial_t m+\nabla\cdot(\n_pH(\n_x u)m)=0\,, m(0)=m_0,\end{cases}\]
The key assumption on the coupling term $F:\mathcal P(\T)\to \mathscr C^2(\T)$ is that there exists a constant $C>0$ such that, for any $(m_1,m_2)\in \mathcal P(\T)$, 
\begin{equation}\label{Eq:HypoCarda} \iint_{(0,T)\times \T}\left(F[m_1]-F[m_2]\right)d(m_1-m_2) \leq-C \iint_{(0,T)\times \T}\left(F[m_1]-F[m_2]\right)^2.\end{equation}
A particularly important class of couplings satisfying this assumption can be found in \cite[Example 1.1]{CardaliaguetKAM} and consists of functions $F$ writing
$F(m)=(m\star \xi)\star \xi$ for smooth, compactly supported kernels $\xi$. In particular, this covers non-local in space couplings. It can also cover non-local in time couplings.
In the present paper, the main difficulty to get the long-time behaviour
is that \eqref{Eq:HypoCarda} is quite difficult to obtain, and deriving it is the bulk of our paper.
To be more precise, consider the case of a linearly controlled population, and of a similar fishing problem, which would write
\begin{equation}
\begin{cases}
-\partial_t u-\frac{\Vert \n_xu\Vert^2}2=\rho\star\Psi_m=:F[m]&\text{ in }(0,T)\times \T\,,
\\ u(T,\cdot)\equiv 0,
\\\partial_tm+\nabla \cdot\left(m\n_x u\right)=0&\text{ in }(0,T)\times \T\,, \\m(0,\cdot)=m_0,
\\ \partial_t \Psi_m-\mu \Delta\Psi_m=K-\e(\rho\star m)&\text{ in }(0,T)\times \T\,, 
\\ \Psi_m(0,\cdot)=0.
\end{cases}
\end{equation}
Observe two things: first, for any two probability measures $(m_1,m_2)$ there holds, for any $t\in [0,T]$, $\int_\T \Psi_{m_1}=\int_\T \Psi_{m_2}$. Second, $z:=\Psi_{m_1}-\Psi_{m_2}$ solves
$\partial_t z-\mu \Delta z=-\e\rho\star(m_1-m_2)$ so that 
\begin{multline*} \iint_{(0,T)\times \T}(F[m_1]-F[m_2])d(m_1-m_2)=\iint_{(0,T)\times \T} z (\rho\star(m_1-m_2))\\=-\frac1\e\left(\int_\T z(T,\cdot)^2+\mu\iint_{(0,T)\times \T}|\n z|^2\right).\end{multline*} By the Poincar\'e-Wirtinger inequality we deduce that 
\[ \iint_{(0,T)\times \T} (F[m_1]-F[m_2])d(m_1-m_2)\leq-\frac{\mu}{\e}\iint_{(0,T)\times \T}z^2\lesssim-\frac{\mu}\e\iint_{(0,T)\times \T}\left(F[m_1]-F[m_2]\right)^2.\] However, such direct computations are not amenable to non-linearities, and the technique which we use to obtain Proposition \ref{Pr:EpsilonMonotone} is much more involved.

\paragraph{Spatial ecology and optimal control}
Another line of research this contribution fits into is the study of the influence of spatial heterogeneity on population dynamics. Moving away from the spatially homogeneous models of Skellam \cite{Skellam}, many researchers investigated the influence of spatial heterogeneity on mathematical ecology. As an important reference, let us point to the monograph \cite{ShigesadaKawaski}. In the wake of the works of Cantrell \& Cosner \cite{CantrellCosner1} and of Berestycki, Hamel \& Roques \cite{BHR},  a popular point of view on the topic has been that of optimising the spatial heterogeneity. This led to several important articles devoted to the understanding of spectral optimisation problem and, following Lou \cite{LouInfluence}, focusing on non-energetic optimisation problem such as biomass optimisation. Although very different in essence, as we are here dealing with agents trying to harvest optimally, rather than trying to optimise a domain for the fishes' population, we mention this line of research as the method we use to prove Proposition \ref{Pr:EpsilonMonotone} actually derives from considerations on the optimisation of the total biomass \cite{heo2021ratio,LiangZhang,Mazari2021,SuTongYang}, as we will explain in the course of the proof. 

While all of the aforementioned contributions deal with ``benevolent'' agents,
there has been an intense activity in recent years concerning ``nefarious'' agents,
wherein the control is optimised so as to eradicate an invasive species in the most efficient way. We refer, for instance, to \cite{Almeida_2022,almeida:hal-03811940,Bressan_2022,Duprez_2021}.

\paragraph{Game theoretical problems and spatial ecology}
The literature devoted to game theoretical problems in spatial ecology is small, but growing rapidly. To briefly describe these contributions, let us start by problems related to Nash equilibria models. Bressan,  Coclite, Shen \& Staicu \cite{Bressan_2013,bressan2010measure,bressan2019competitive} investigated optimal harvesting problems, but within the framework of elliptic equations with measure data. A key aspect of their work is that the measure to be optimised is the density of players. What they prove (among other things) is that there exist measures satisfying optimality conditions for the game theoretical problem under consideration. In a previous article by the last two authors \cite{Mazari_2022}, Nash equilibria, their existence and qualitative properties, were investigated in situations where a finite number of players is involved. Let us conclude by mentioning our previous paper \cite{KMFRB23} in which we investigated a MFG system for  harvesting, but with two major differences. The first one is that \cite{KMFRB23} focuses on bistable models for the fishes' population, while the second one has to do with the fact that \cite{KMFRB23} focuses on special solutions in the whole of $\R^d$, namely, traveling waves, which are known to be important in the dynamical properties of the system. The methods of \cite{KMFRB23} are essentially constructive, and are completely different from the ones presented here.

\section{Proof of Theorems \ref{Th:ExistenceMFGCompet}: existence and regularity of solutions}
\label{sec:exist}
\subsection{The time dependent system}
\label{subsec:exist_TD}
As is customary in MFG, for the time dependent system \eqref{Eq:MFGCompet},
the existence of solutions relies on a fixed point argument
that requires some compactness properties given by a priori estimates.
Here, those estimates are stated in the subsequent proposition.
\begin{proposition}\label{Pr:UniformRegularityEstimates}
There exists a constant $C=C(\e,\delta)$ such that, for any $\nu>0$, for any $T>0$, if $(u,m,\theta_{\e,m})$ is a solution of \eqref{Eq:MFGCompet}, there holds:
\begin{align}
    \label{Eq:Goal}
    &\Vert \n_x u\Vert_{L^\infty((0,T)\times \T)}
    +\Vert \partial_t u\Vert_{L^\infty((0,T)\times \T)}\leq C.
    \\
    \label{Eq:Goal3}
    &\Vert \theta_{\e,m}-\overline \theta\Vert_{L^\infty(0,T; \mathscr C^2(\T))}
    +\Vert \partial_t \theta_{\e,m}\Vert_{L^\infty((0,T)\times \T)}
    \leq C \e \Vert \rho\Vert_{\mathscr C^2}.
\end{align}
\end{proposition}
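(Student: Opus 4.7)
The plan is to establish the two estimates in order: first \eqref{Eq:Goal3} on the fishes' density, which controls the forcing of the HJB equation, then \eqref{Eq:Goal} on the value function. Both bounds hinge on the assumption $\theta_0=\overline{\theta}_K$ from \ref{H:Theta0}, which places us in a near-equilibrium regime: the fishing term $-\varepsilon(\rho\star m)\theta$ is a small perturbation of the logistic dynamics whose stationary state $\overline{\theta}_K$ is exponentially stable (cf.\ \eqref{Eq:EldLongTime}), so one expects $\theta_{\varepsilon,m}$ to stay within an $O(\varepsilon)$-neighbourhood of $\overline{\theta}_K$, and the forcing of the HJB to be essentially stationary.

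For \eqref{Eq:Goal3}, introduce $v:=\theta_{\varepsilon,m}-\overline{\theta}_K$, which vanishes at $t=0$ by \ref{H:Theta0}. A direct calculation using \eqref{Eq:Main} and \eqref{Eq:ELD} gives
\begin{equation*}
    \partial_t v-\mu\Delta v-(K-2\overline{\theta}_K)\,v=-v^2-\varepsilon(\rho\star m)(\overline{\theta}_K+v).
\end{equation*}
The point is that the linearised operator $-\mu\Delta-(K-2\overline{\theta}_K)$ is coercive: since $\lambda_1(\mu,K-\overline{\theta}_K)=0$ by the discussion in Section \ref{subsec:stationary} and $\overline{\theta}_K>0$ strictly (by the strong maximum principle), the monotonicity of the Rayleigh quotient \eqref{Eq:Rayleigh} in the potential yields a spectral gap $\lambda_1(\mu,K-2\overline{\theta}_K)>0$. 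Since $m$ is a probability measure and $\rho\in\mathscr{C}^\infty_c(\T)$, the convolution $\rho\star m$ is uniformly bounded in $\mathscr{C}^k(\T)$ for every $k$, so the forcing is of size $\varepsilon\|\rho\|_{\mathscr{C}^2}$. Combining linear comparison against sub/super-solutions of the form $\pm c\varepsilon\varphi_1(x)$ (with $\varphi_1$ the positive principal eigenfunction of the linearised operator) with a bootstrap via interior parabolic Schauder estimates then yields $\|v\|_{L^\infty(0,T;\mathscr{C}^{2+\beta}(\T))}\leq C\varepsilon\|\rho\|_{\mathscr{C}^2}$; reading the PDE pointwise delivers the matching bound on $\|\partial_t v\|_\infty$.

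With \eqref{Eq:Goal3} in hand, $f:=\rho\star\theta_{\varepsilon,m}$ is uniformly bounded in $\mathscr{C}^2([0,T]\times\T)$ with $\|\partial_t f\|_\infty\leq C\varepsilon$. The gradient bound on $u$ follows from the classical Bernstein trick applied to $w:=\tfrac12|\nabla u|^2$: differentiating the HJB and contracting against $\nabla u$ yields
\begin{equation*}
    -\partial_t w-\nu\Delta w-\nabla u\cdot\nabla w+\nu|D^2u|^2=\nabla u\cdot\nabla f,
\end{equation*}
which, together with the terminal condition $w(T,\cdot)\equiv 0$, a Young inequality on the right-hand side, and the maximum principle, gives $\|\nabla u\|_\infty\leq C$ uniformly in $T$ and $\nu\geq 0$ (the case $\nu=0$ being recovered by vanishing viscosity). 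For $\|\partial_t u\|_\infty$, one first notes that $\partial_t u(T,\cdot)=-f(T,\cdot)$ is bounded (by reading the HJB at $t=T$ using $u(T)\equiv 0$). Differentiating the HJB in time, $W:=\partial_t u$ satisfies the backward linear transport--diffusion equation
\begin{equation*}
    -\partial_t W-\nu\Delta W-\nabla u\cdot\nabla W=\rho\star\partial_t\theta_{\varepsilon,m},
\end{equation*}
whose terminal data and source are both of size $O(\varepsilon)$ by the previous step. A naive comparison would give an $O(T)$-growing bound; to obtain a $T$-uniform estimate one compares $u$ with the stationary profile $\overline{\lambda}(T-t)+\overline{u}(x)$ corresponding to the frozen ergodic HJB with forcing $\rho\star\overline{\theta}_K$, and shows the residual satisfies an equation whose $L^\infty$ bound is $T$-independent thanks to the $O(\varepsilon)$ discrepancy $f-\rho\star\overline{\theta}_K$.

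The main obstacle is precisely this last point: the equation satisfied by $W=\partial_t u$ is a purely advective--diffusive linear backward equation, so it carries no intrinsic dissipation, and the maximum principle alone only produces a bound that grows linearly with $T$. Removing this growth requires exploiting both the quantitative near-stationarity $\|\partial_t f\|_\infty\leq C\varepsilon$ from \eqref{Eq:Goal3} and the ergodic structure of the HJB with stationary right-hand side, in order to extract a constant $C=C(\varepsilon,\delta)$ that is genuinely uniform in $T$ and in the viscosity $\nu>0$.
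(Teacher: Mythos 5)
Your treatment of \eqref{Eq:Goal3} is sound and follows essentially the same route as the paper: linearise around $\overline\theta_K$, use the spectral gap $\lambda_1(\mu,K-2\overline\theta_K)>0$ inherited from $\lambda_1(\mu,K-\overline\theta_K)=0$ and $\overline\theta_K>0$, and bootstrap with parabolic regularity (the paper routes the zeroth-order bound through the auxiliary steady state $\overline\theta_{K-\e M_1}$ and the derivative bounds through a Duhamel formula, but the mechanism is identical). The problem is your proof of \eqref{Eq:Goal}. The Bernstein computation on $w=\tfrac12|\nabla u|^2$ does not close: the only dissipative term it produces is $\nu|D^2u|^2$, and at a spatial maximum of $w$ the resulting inequality $\nu|D^2u|^2\leq\nabla u\cdot\nabla f$ bounds the Hessian at that point but gives no control whatsoever on $|\nabla u|$; a Young inequality on the right-hand side only reintroduces $w$ itself. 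Worse, the good term vanishes as $\nu\to0$, so the argument degenerates completely in the first-order case that the paper needs. The paper's proof (Proposition \ref{Pr:RegularityHJB}) instead differentiates the equation \emph{twice}: for $W=\partial^2_{\xi\xi}u$ the quadratic Hamiltonian produces the self-improving term $W^2$ in $-\partial_tW-\nu\Delta W-\langle\nabla u,\nabla W\rangle-W^2\geq-\Vert F\Vert_{\mathscr C^2}$, which yields a one-sided Hessian bound uniform in $T$ and $\nu$; periodicity then converts uniform semi-convexity into a uniform Lipschitz bound. This is the key idea your proposal is missing.

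The second issue is your bound on $\partial_t u$. First, it is not needed in the form you pursue: once $\Vert\nabla u\Vert_\infty$ and the (one-sided, hence via periodicity two-sided in the relevant norm) Hessian bound are available, the paper simply reads $\partial_t u=-\nu\Delta u-\tfrac12\Vert\nabla u\Vert^2-F$ off the equation, with no time differentiation. Second, the repair you sketch for the $O(T)$ growth --- comparison with the ergodic profile $\overline\lambda(T-t)+\overline u$ --- is circular in the paper's architecture: existence and uniqueness of the ergodic solution, and the convergence of $u^T$ to it, are proved \emph{downstream} of Proposition \ref{Pr:UniformRegularityEstimates} and rely on these very a priori bounds. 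You correctly identify that your own argument leaves a gap at this point ("the main obstacle"), but you do not close it, and the intended closure would import results that are not yet available at this stage of the paper.
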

Observe that the latter estimates are uniform in time;
while this is not necessary \emph{per se} when discussing the existence of solutions in a bounded time interval,
this is important for the study of the long-time behaviour. Since the proof is quite lengthy, we first show how it implies Theorem \ref{Th:ExistenceMFGCompet}.

\begin{proof}[Proof of Theorem \ref{Th:ExistenceMFGCompet} for \eqref{Eq:MFGCompet}] 
For a fixed $\nu>0$, the existence of solutions to \eqref{Eq:MFGCompet} follows from Proposition \ref{Pr:UniformRegularityEstimates}
and the Leray-Schauder fixed point theorem.
Regarding the first-order case $\nu=0$, take a decreasing sequence $\{\nu_k\}_{k\in \N}$ of viscosities vanishing to 0 and, for any $k\in \N$, let 
$(u_{k},m_{k},\theta_{k})$ be a solution of \eqref{Eq:MFGCompet} with viscosity $\nu_k$.
Observe that the estimates in Proposition \ref{Pr:UniformRegularityEstimates}
are uniform in $k\in \N$.
Therefore, up to extracting a subsequence, 
$\{\theta_k\}_{k\in \N}$  converges in $C^{0,1}([0,T]\times\T)$ to some limit $\theta$, $\{u_k\}_{k\in \N}$ converges to a limit $u$ that is the unique viscosity solution of the HJB equation associated with $\theta$ and $\{m_k\}_{k\in \N}$ converges in the distributional sense to the solution of the Fokker-Planck equation. Finally, standard continuity estimates on Fokker-Planck equations \cite[Lemma 5.3]{CardaliaguetKAM} provide the continuity of $m$ for the Wasserstein distance, thereby providing the existence of a solution to \eqref{Eq:MFGCompet}.
\end{proof}
It remains to prove Proposition \ref{Pr:UniformRegularityEstimates}. We begin with an auxiliary result.

\begin{proposition}\label{Pr:RegularityNonLocal} Let $K$ satisfy \eqref{H:Theta0} and let $\overline \theta_K$ be the solution of \eqref{Eq:ELD}.
Let $m\in L^\infty(0,\infty;\mathcal M_+(\O))$ and, for any $\e\geq 0$, let $\theta_{\e,m}$ be the unique solution of 
\begin{equation}\begin{cases}
\partial_t\theta_{\e,m}-\mu \Delta \theta_{\e,m}-\theta_{\e,m}(K-\theta_{\e,m})-\e (\rho\star m)\theta_{\e,m}=0&\text{ in }[0,+\infty)\times \T\,, 
\\ \theta_{\e,m}=\theta_0\end{cases}\end{equation} There exists a constant $C>0$ such that 
\[ \Vert \theta_{\e,m}-\overline \theta_K\Vert_{L^\infty(0,+\infty; \mathscr C^2(\T))}+\Vert \partial_t \theta_{\e,m}\Vert_{L^\infty((0,+\infty)\times \T)}\leq C \e \Vert \rho\Vert_{\mathscr C^2}\Vert m\Vert_{L^\infty(0,+\infty;\mathcal M(\T))}.
\]
\end{proposition}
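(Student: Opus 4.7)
The strategy is to exploit that $\theta_0=\overline\theta_K$, so that the only source of deviation between $\theta_{\e,m}$ and $\overline\theta_K$ is the fishing term $-\e(\rho\star m)\theta_{\e,m}$, which is of order $\e$. Since this term is non-positive, $\theta_{\e,m}$ is a subsolution of the logistic equation $\partial_t\theta-\mu\Delta\theta=\theta(K-\theta)$ of which $\overline\theta_K$ is a stationary solution with identical initial datum, so by parabolic comparison $0\leq\theta_{\e,m}\leq\overline\theta_K$. The strong maximum principle applied to \eqref{Eq:ELD} yields $c_K:=\min_{\T}\overline\theta_K>0$, while applied to the equation for $\theta_{\e,m}$ it gives $\theta_{\e,m}>0$ on $[0,\infty)\times\T$. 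Setting $v:=\overline\theta_K-\theta_{\e,m}\geq 0$ and using $-\mu\Delta\overline\theta_K=\overline\theta_K(K-\overline\theta_K)$, a direct computation yields
\[
\partial_t v-\mu\Delta v=(K-2\overline\theta_K)\,v+v^2+\e(\rho\star m)\theta_{\e,m},\qquad v(0,\cdot)=0.
\]

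The heart of the argument is an $L^\infty$ bound of order $\e$ on $v$, obtained by renormalisation. I would set $\psi:=v/\overline\theta_K\in[0,1)$; using $-\mu\Delta\overline\theta_K/\overline\theta_K=-(K-\overline\theta_K)$, the zero-order contributions telescope and the equation rearranges to
\[
\partial_t\psi-\mu\Delta\psi-2\mu\frac{\nabla\overline\theta_K}{\overline\theta_K}\cdot\nabla\psi=(1-\psi)\bigl(\e(\rho\star m)-\overline\theta_K\psi\bigr).
\]
Fix $T>0$ and let $(t_0,x_0)\in[0,T]\times\T$ be a maximum point of $\psi$. Since $\psi(0,\cdot)=0$ we may assume $t_0>0$, and since $\theta_{\e,m}>0$ we have $\psi(t_0,x_0)<1$. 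At this point $\partial_t\psi\geq 0$, $\Delta\psi\leq 0$, $\nabla\psi=0$, and the equation forces $\e(\rho\star m)(t_0,x_0)\geq\overline\theta_K(x_0)\psi(t_0,x_0)$, whence
\[
\psi(t_0,x_0)\leq\frac{\e\,\|\rho\|_{L^\infty}\|m\|_{L^\infty(0,\infty;\mathcal M(\T))}}{c_K}.
\]
As the bound is independent of $T$, one concludes $\|v\|_{L^\infty((0,\infty)\times\T)}\leq C\e\|\rho\|_{L^\infty}\|m\|_{L^\infty(0,\infty;\mathcal M(\T))}$ with $C=\|\overline\theta_K\|_\infty/c_K$ depending only on $K$ and $\mu$.

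From there, the $\mathscr C^2$ bound follows by a standard parabolic bootstrap. The right-hand side of the PDE for $v$ is now bounded in $L^\infty$ by $C\e\|\rho\|_{L^\infty}\|m\|$, and since $\rho\star m\in L^\infty(0,\infty;\mathscr C^2(\T))$ with norm $\leq\|\rho\|_{\mathscr C^2}\|m\|_{\mathcal M(\T)}$, parabolic $L^p$ estimates followed by Schauder estimates on sliding time-windows $[n,n+2]\times\T$ (and $[0,1]\times\T$ using the trivial initial datum) upgrade $v$ to $L^\infty(0,\infty;\mathscr C^{2+\alpha}(\T))$ with the target bound, uniformly in time thanks to the translation invariance of the linear part of the equation. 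The $L^\infty$ bound on $\partial_t v=\partial_t\theta_{\e,m}$ is then read off the PDE.

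The main obstacle is the $L^\infty$ step. A naive linearisation around $\overline\theta_K$ produces the operator $-\mu\Delta-(K-2\overline\theta_K)$, whose first eigenvalue is strictly positive (its potential is strictly smaller than $K-\overline\theta_K$, for which $\lambda_1$ vanishes by the observation following \eqref{Eq:Rayleigh}, and $\lambda_1$ is strictly monotone in the potential); yet this spectral gap need not dominate $\|\overline\theta_K\|_\infty$, so the destabilising term $v^2$ cannot be absorbed by coercivity alone. The multiplicative rescaling $\psi=v/\overline\theta_K$ is what converts the spectral gap into a pointwise-positive zero-order coefficient on which the parabolic maximum principle acts directly, at the modest price of a benign drift $-2\mu\nabla\overline\theta_K/\overline\theta_K$.
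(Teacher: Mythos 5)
Your proof is correct in substance but follows a genuinely different route from the paper's. For the key $L^\infty$ estimate, the paper sandwiches $\theta_{\e,m}$ between the two stationary barriers $\overline\theta_{K-\e M_1}\leq\theta_{\e,m}\leq\overline\theta_K$ (with $M_1=\Vert\rho\Vert_{L^\infty}\Vert m\Vert_{L^\infty_t\mathcal M}$) and then quantifies $\Vert\overline\theta_K-\overline\theta_{K-\e M_1}\Vert_{L^\infty}\lesssim\e M_1$ by an elliptic/spectral argument on the difference, using the positivity of $\lambda_1(\mu,K-(\overline\theta_K+\overline\theta_{K-\e M_1}))$ plus a bootstrap; your renormalisation $\psi=v/\overline\theta_K$ replaces all of this by a single parabolic maximum-principle evaluation, is more self-contained, and yields the explicit constant $\Vert\overline\theta_K\Vert_\infty/c_K$ --- your computation of the equation for $\psi$ and the sign analysis at the maximum point are correct, and your closing remark correctly identifies why the naive linearisation fails (the spectral gap of $-\mu\Delta-(K-2\overline\theta_K)$ need not absorb $v^2$). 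For the $\mathscr C^2$ and $\partial_t$ bounds the two routes diverge again: the paper differentiates the equation in $x$ (once, then twice) and controls the resulting linear problems via the exponential decay of the semigroup generated by $-\mu\Delta-V_\e$ together with a Duhamel formula (Lemmas \ref{Le:Regularity}--\ref{Le:Duhamel}), which only requires $L^\infty$ bounds on the source terms; your route goes through parabolic $L^p$ and Schauder estimates on sliding windows. The one point you should make explicit there is that under the stated hypothesis $m\in L^\infty(0,\infty;\mathcal M_+(\T))$ the coefficient $t\mapsto\rho\star m(t,\cdot)$ is merely bounded measurable in time, so classical parabolic Schauder theory (which requires H\"older continuity of the source in the time variable) does not apply directly; since the conclusion only asks for $L^\infty(0,\infty;\mathscr C^2(\T))$ regularity in space, you need the anisotropic Schauder estimates (source H\"older in $x$, merely measurable in $t$, e.g.\ Krylov or Brandt--Knerr), or you should switch to the paper's semigroup/Duhamel argument for this step, which is insensitive to the time regularity of $m$. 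With that reference supplied, your two-step bootstrap ($L^p$ first to get $\Vert v\Vert_{L^\infty_t\mathscr C^\alpha_x}=O(\e)$, then Schauder to close at $\mathscr C^{2+\alpha}_x$) does deliver the stated bound, and the $\partial_t$ estimate is indeed read off the equation.
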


\begin{proof}[Proof of Proposition \ref{Pr:RegularityNonLocal}]
We begin with a weaker estimate, namely, that there exists a constant $C$ such that
\begin{equation}\label{Eq:Est0}\Vert \theta_{\e,m}-\overline \theta_K\Vert_{L^\infty(0,+\infty; \mathscr C^0(\T))}\leq C \e \Vert \rho\Vert_{L^\infty}\Vert m\Vert_{L^\infty(0,+\infty;\mathcal M(\T))}.
\end{equation}
First of all, observe that we have 
\[ 0\leq \e (\rho\star m)\leq \e  \Vert \rho\Vert_{L^\infty}\Vert m\Vert_{L^\infty(0,+\infty;\mathcal M(\T))}=:\e M_1.\] 

Let $\overline\theta_{K-\e M_1}$ be the unique solution of \eqref{Eq:ELD} with $K$ replaced with $K-\e M_1$. For $\e$ small enough, $\overline\theta_{K-\e M_1}$ is uniquely defined and satisfies $\inf_\T \overline\theta_{K-\e M_1}>0$.  Now, observe that $z=\overline\theta-\overline\theta_{K-\e M_1}$ satisfies
\begin{equation}\label{eq:z} -\mu \Delta z-z(K-(\overline\theta_{K-\e M_1}+\overline \theta_K))=\e M_1 \overline\theta_{K-\e M_1}\geq 0. \end{equation}
Observe that $\lambda_1(\mu,K-\overline\theta)=0$: indeed, $\overline\theta$ is a positive eigenfunction associated with the eigenvalue zero. Since $\overline\theta$ has constant sign, it is associated with the lowest eigenvalue. As $ \overline\theta_{K-\e M_1}>0$, it is an easy consequence of the variational formulation \eqref{Eq:Rayleigh} that
\begin{equation}\label{Eq:EvIn}\lambda_1(\mu,K-(\overline\theta+ \overline\theta_{K-\e M_1}))>0\end{equation} whence we first deduce that
\[ z\geq 0\text{ in }\T.\] Indeed, should the negative part $z_-$ of $z$ be non-zero, multiplying \eqref{eq:z} by $z_-$ and integrating by parts leads to 
\[ \mu\int_\O |\n z_-|^2-\int_\T z_-^2 (K-(\overline\theta+ \overline\theta_{K-\e M_1}))\leq 0,\] in contradiction with \eqref{Eq:EvIn}

Furthermore, multiplying \eqref{eq:z} by $z$ and integrating by parts we obtain 
\[ \lambda_1(\mu,K-(\overline\theta+ \overline\theta_{K-\e M_1}))\int_\O z^2\leq \e M_1 \left(\Vert \overline\theta_K\Vert_{L^\infty}+\Vert \theta_{\e,m}\Vert_{L^\infty(\T)}\right)\Vert z_1\Vert_{L^2(\O)}.\] Now observe that we have 
\[-\mu\Delta z=f:=\e M_1\overline \theta_{K-\e M_1}+z\left(K-(\overline\theta_{K-\e M_1}+\overline \theta_K)\right).\] We thus deduce that 
\[ \Vert z\Vert_{W^{1,2}(\O)}\lesssim \Vert f\Vert_{L^2(\T)}\] 
and a simple bootstrap argument leads to the following conclusion: there exists a constant $C_0$ such that 
\[ \Vert z\Vert_{L^\infty(\T)}\leq C_0\e M_1.\] Overall, we have thus established that 
\begin{equation}\label{Eq:Est1} 0\leq \overline \theta_K-\overline\theta_{K-\e M_1}\leq C_0\e M_1.\end{equation}

Coming back to $\theta_{\e,m}$, observe that the maximum principle implies that $\theta_{\e,m}\geq 0$ in $[0,+\infty)\times \R^d$. In particular, $\theta_{\e,m}$ satisfies
\[ \theta_{\e,m}\leq \overline\theta.\] Furthermore, using \eqref{Eq:Est1}, it is readily checked that $\theta_{\e,m}$ is a super-solution of the equation
\[\begin{cases}\partial_t\psi_\e-\mu\Delta \psi_\e-\psi_\e(K-\e M_1-\psi)=0&\text{ in }[0,+\infty)\times \T\,, 
\\ \psi_t(0,\cdot)=\overline\theta_{K-\e M_1}\end{cases}\] whose solution is $\psi=\overline\theta_{K-\e M_1}$.  This implies $\overline\theta_{K-\e M_1}\leq \theta_{\e,m}$ which, in combination with \eqref{Eq:Est1}, yields 
\[ 0\leq \overline\theta_K-\theta_{\e,m}\leq C \e M_1,\] thereby establishing \eqref{Eq:Est0}.

We now proceed to obtain a uniform control of the $\mathscr C^1$ norm of $\theta_{\e,m}$. Let $i\in \{1,\dots,d\}$ and let $\p_{\e,m}:=\partial_{x_i} \theta_{\e,m}$. Clearly, $\p_{\e,m}$ satisfies
\begin{equation}\label{Eq:EquationDerivative}
\partial_t\p_{\e,m}-\mu \Delta \p_{\e,m}-\p_{\e,m}\left(K-\e (\rho\star m)-2\theta_{\e,m}\right)=-\e \theta_{\e,m}(\partial_{x_i}\rho)\star m+(\partial_{x_i}K)\theta_{\e,m}.\end{equation}
From the same argument that led to \eqref{Eq:EvIn}, \eqref{Eq:Est0} yields the existence of $\underline\lambda>0$ such that, for any $t\geq 0$ and any $\e>0$ small enough, there holds
\begin{equation}\label{Eq:EstLambda} 0<\underline\lambda\leq \lambda(\mu,K-\e(\rho\star m)-2\theta_{\e,m}).\end{equation} Indeed, observe that by \eqref{Eq:Rayleigh}, there exists a constant $c$ such that \[\left| \lambda(\mu,K-\e(\rho\star m)-2\theta_{\e,m})-\lambda(\mu,K-2\overline\theta_K)\right|\leq c\e.\]

We claim that \eqref{Eq:EstLambda} suffices to obtain a first uniform $L^\infty$ estimate on $\p_{\e,m}$. For the sake of readability, we let 
\[V_\e:=(K-\e(\rho\star m)-2\theta_{\e,m})\text{ and } f_\e:=-\e(\partial_{x_i}\rho)\star m+(\partial_{x_i}K)\theta_{\e,m}.\] Observe in particular that 
\begin{equation}\label{Eq:EstfEps} \Vert f_\e\Vert_{L^\infty((0,+\infty)\times \T)}\leq C_1(\e\Vert \rho\Vert_{\mathscr C^1(\T)}+\Vert K\Vert_{\mathscr C^1(\T)}).\end{equation}  With these notations, $\p_{\e,m}$ solves
\[ \partial_t \p_{\e,m}-\mu \Delta \p_{\e,m}-V_\e \p_{\e,m}=f_\e.\]
We need two simple lemmas to allow us to conclude.
\begin{lemma}\label{Le:Regularity}
Let $V_\e$ be given as above. Then, for any $g\in L^\infty(\T)$, the solution $\p$ of 
\begin{equation}\label{Eq:g}\begin{cases}
\partial_t \p-\mu \Delta \p-V_\e \p=0&\text{ in }[0,+\infty)\times \T\,, 
\\ \p(0,\cdot)=g\end{cases}\end{equation} satisfies 
\[ \Vert \p\Vert_{L^\infty((0,+\infty)\times \T)}\leq C_2e^{-\overline\lambda t} \Vert g\Vert_{L^\infty(\T)}\] for some constants $C_2$ and $\overline\lambda$ that only depend on the dimension,  $\Vert m\Vert_{L^\infty(0,T;\mathcal M(\T))}$,  $\delta$ and  $K$.
\end{lemma}
Observe that in one dimension and for a constant $g$, this is merely a consequence of the standard parabolic regularity estimate
\[ \Vert \p\Vert_{L^\infty(0,\infty;W^{1,2}(\mathbb T))}\leq \Vert g\Vert_{L^\infty(\mathbb T)}e^{-\int_0^t \lambda_1(\mu,V_\e(s))ds}\] and of the Sobolev embedding $W^{1,2}\hookrightarrow \mathscr C^0$.
\begin{proof}[Proof of Lemma \ref{Le:Regularity}]
Observe that by the maximum principle and by linearity it suffices to treat the case $g\equiv 1$ in $\T$. The maximum principle yields, in this case, 
\[ \p\geq 0,\] so that
\[ V_\e\p=(K-\e(\rho\star m)-2\theta_{\e,m})\p\leq (K-2\theta_{\e,m})\p.\] Defining $\overline V:=K-2\overline \theta_K$ we obtain
\begin{equation}\label{Eq:Annex} \partial_t \p-\mu\Delta \p-\overline V\p\leq 2(\overline \theta_K-\theta_{\e,m})\p.\end{equation} Since $\p\geq 0$, \eqref{Eq:Est0} gives
\begin{equation}\label{Eq:LeSub}\partial_t \p-\mu \Delta \p-(\overline V-2\e M_1)\p\leq 0.\end{equation} There exists $\overline\lambda>0$ such that, for $\e>0$ small enough, 
\[0< \overline \lambda\leq \lambda_1(\mu,\overline V-2\e M_1).\]
Let $\overline\p_\e$ be the ($L^2$ normalised) eigenfunction associated with $\lambda_1(\mu,\overline V-2\e)$. By the strong maximum principle and elliptic regularity, there exists $\underline d>0$ such that, for any $\e>0$ small enough, $\underline d\leq\inf_{ \O} \p_\e\leq \sup_\T \p_\e\leq \frac1{\underline d}$.  We  deduce from \eqref{Eq:LeSub} that 
\[ 0\leq \p\leq \frac{e^{-\overline\lambda t}}{\underline d}\Vert\p_\e\Vert_{L^\infty(\T)}\leq \frac1{\underline d^2}e^{-\overline \lambda t}.\]

\end{proof}The next result we need is a Duhamel-type formula.
\begin{lemma}\label{Le:Duhamel}
Define, for any $s\geq 0$, the solution $\p_s$ of 

\begin{equation}\begin{cases}
\partial_t\p_s-\mu\Delta \p_s-V_\e\p_s=0&\text{ in }[s,+\infty)\times \T\,, 
\\ \p_s(s,\cdot)=f_\e(s).\end{cases}
\end{equation}
Then 
\[ \p_{\e,m}(t,\cdot)=\int_0^t \p_s(t,\cdot)ds.\]
\end{lemma}
The proof is a straightforward computation. 

Lemmata \ref{Le:Regularity}-\ref{Le:Duhamel} allow to go back to the proof of Proposition \ref{Pr:RegularityNonLocal} as follows: from Lemma \ref{Le:Duhamel} we have 
\[ \Vert \p_{\e,m}(t,\cdot)\Vert_{L^\infty(\T)} \leq \int_0^t \Vert \p_s(t,\cdot)\Vert_{L^\infty(\T)}ds.\] Lemma \ref{Le:Regularity} in turn yields
\[ \Vert \p_{\e,m}(t,\cdot)\Vert_{L^\infty(\T)}\leq \int_0^t e^{-\overline\lambda(t-s)}\Vert f_\e(s)\Vert_{L^\infty(\T)}ds\leq C_3
\]
for some constant $C_3$, where we used \eqref{Eq:EstfEps}.

To obtain the $\mathscr C^1$ control of $\theta_{\e,m}$ we let $z:=\partial_{x_i}\theta_{\e,m}-\partial_{x_i}\overline\theta$. From \eqref{Eq:EquationDerivative}, $z$ solves
\begin{multline*} \partial_t z-\mu \Delta z-z(K-2\overline \theta_K)=-\e(\partial_{x_i}\rho\star m)\theta_{\e,m}+(\partial_{x_i}K)(\theta_{\e,m}-\overline\theta)-2\p_{\e,m}(\overline\theta-\theta_{\e,m}).\end{multline*} We deduce from the previous estimates that there exists a constant $C_4$ such that 
\[-C_4\e \leq \partial_t z-\mu \Delta z-z(K-2\overline \theta_K)\leq C_4\e.\] From the same arguments as before, we conclude that 
\[\Vert z\Vert_{L^\infty((0,+\infty)\times \T)}\leq C_5\e,\] whence the conclusion:
\begin{equation}\label{Eq:Est2}
\Vert \overline\theta-\theta_{\e,m}\Vert_{L^\infty(0,+\infty;\mathscr C^1(\T))}\leq C_5 \e.\end{equation}

\paragraph{Control of the $\mathscr C^2$ norm} Our goal is now to prove
\begin{equation}\label{Eq:Est3}
\Vert \overline\theta-\theta_{\e,m}\Vert_{L^\infty(0,+\infty;\mathscr C^2(\T))}\leq C \e\end{equation} for some constant $C$. To do so, we use the approach introduced to derive \eqref{Eq:Est2}, and we let $\p_{\e,m}:=~\partial^2_{x_ix_k}\theta_{\e,m}$, for $i\,, j\in \{1,\dots,d\}$. Then, $\p_{\e,m}$ satisfies
\begin{multline}
\label{Eq:SecondDerivative}
\partial_t \p_{\e,m}-\mu \Delta \p_{\e,m}-V_\e\p_{\e,m}=\theta_{\e,m}\partial^2_{x_ix_j}K-\e(\partial^2_{x_ix_j}\rho\star m)\theta_{\e,m}+\partial_{x_i}K\partial_{x_j}\theta_{\e,m}+\partial_{x_j}K\partial_{x_i}\theta_{\e,m}\\-\e\left(\partial_{x_i}\theta_{\e,m}(\partial_{x_j}\rho\star m)+\partial_{x_j}\theta_{\e,m}(\partial_{x_i}\rho\star m)\right)\end{multline}
Using once again Lemmata \ref{Le:Regularity}-\ref{Le:Duhamel} we deduce that, uniformly in $\e$, we have 
\begin{equation}\label{Eq:Est4}
\Vert \p_{\e,m}\Vert_{L^\infty(0,+\infty;\mathscr C^2(\T))}\leq C \end{equation} for some $C$. Observing that $\overline\p:=\partial_{x_ix_j}\overline\theta$ solves 
\[ \partial_t \overline\p-\mu \Delta \overline\p-\overline V\overline\p=\overline\theta\partial^2_{x_ix_j}K+\partial_{x_j}K\partial_{x_i}\overline\theta+\partial_{x_i}K\partial_{x_j}\overline\theta\] and taking the difference $z:=\overline \p-\p_{\e,m}$, we obtain \eqref{Eq:Est3} exactly in the same way that we derived \eqref{Eq:Est2}.  Consequently, we deduce that 
\begin{equation}\label{Eq:Est5}
\Vert \partial_t \theta_{\e,m}\Vert_{L^\infty((0,+\infty)\times\T)}\leq C \e,\end{equation} thereby concluding the proof.
\end{proof}

In the following result we work on the solution of the HJB equation
\[ -\partial_t u-\nu\Delta u-\frac{\Vert \n_xu\Vert^2}2=\rho\star \theta_{\e,m}\] and, since the right-hand side of the equation is uniformly bounded in $\mathscr C^2$, we actually aim at deriving a regularity result for the solution of 
\begin{equation}\label{Eq:HJB2}
\begin{cases}
-\partial_t u_T-\nu\Delta u_T-\frac{\Vert \n_x u_T\Vert^2}2=F&\text{ in }(0,T)\times \T\,, 
\\ u_T(T,\cdot)=0.\end{cases}\end{equation}
We now prove the following regularity bound:
\begin{proposition}\label{Pr:RegularityHJB} Let $F\in L^\infty(0,T;\mathscr C^2(\T))$ and $u_T$ be the unique solution of \eqref{Eq:HJB2}.
 There exists a constant $C$ such that for any $T\geq 0$  and any $\nu>0$ there holds
\begin{equation}\label{Eq:Goal} \Vert \n_x u\Vert_{L^\infty((0,T)\times \T)}+\Vert \partial_t u\Vert_{L^\infty((0,T)\times \T)}\leq C(1+\nu)\Vert F\Vert_{L^\infty(0,T;\mathscr C^2(\T))}.\end{equation}  In fact, $V$ is uniformly semi-convex: there exists a constant $C'$ such that for any $\nu>0$ there holds
\begin{equation}\label{Eq:SConcave}
 \n^2_xu\geq- C'\Vert F\Vert_{L^\infty(0,T;\mathscr C^2(\T))}\mathrm{I}_d
\end{equation} where we used the standard order on the set of symmetric matrices.
\end{proposition}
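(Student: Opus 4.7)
The plan is to establish the estimates via the Bernstein method, differentiating the HJB equation and applying the maximum principle to suitable auxiliary quantities. The quadratic structure of the Hamiltonian is essential: upon differentiation it produces a coercive $|D^2u|^2$ term on the dissipative side of the resulting parabolic equation, which is precisely what allows for $T$-uniform bounds.

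\textbf{Gradient estimate.} Differentiating \eqref{Eq:HJB2} in $x_i$, multiplying by $\partial_{x_i}u$, and summing over $i$ yields the parabolic equation for $w := |\nabla u|^2$:
\begin{equation*}
-\partial_t w - \nu \Delta w - \nabla u \cdot \nabla w + 2\nu |D^2 u|^2 = 2 \nabla u \cdot \nabla F.
\end{equation*}
Since $w(T,\cdot) = 0$, I would apply the maximum principle to $w$ or to a modified quantity such as $\widetilde w := w + \Lambda u$ or $w\, e^{-\lambda u}$, where the correction is designed to absorb the transport term and the right-hand side via Young's inequality $2\nabla u \cdot \nabla F \leq \gamma w + \gamma^{-1}|\nabla F|^2$. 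At an interior maximum, the dissipative term $2\nu|D^2u|^2$ combined with this splitting should provide the coercivity needed to close the estimate and yield $\|\nabla u\|_{L^\infty((0,T)\times\T)} \leq C(1+\nu)\|F\|_{\mathscr C^1}$ uniformly in $T$.

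\textbf{Semiconvexity.} Fix $\xi\in \mathbb{S}^{d-1}$ and set $\phi := \partial^2_{\xi\xi}u$. Differentiating \eqref{Eq:HJB2} twice in the $\xi$ direction produces
\begin{equation*}
-\partial_t \phi - \nu \Delta \phi - \nabla u \cdot \nabla \phi = |\nabla \partial_\xi u|^2 + \partial^2_{\xi\xi} F \geq \phi^2 + \partial^2_{\xi\xi} F,
\end{equation*}
the crucial inequality being $|\nabla \partial_\xi u|^2 \geq (\partial_\xi^2 u)^2 = \phi^2$. Since $\phi(T,\cdot)=0$, either $\phi \geq 0$ (in which case the lower bound is trivial) or the minimum is attained in the interior, where the maximum principle gives $\phi_{\min}^2 \leq -\partial^2_{\xi\xi}F|_{\min} \leq \|F\|_{\mathscr C^2}$. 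This yields a pointwise uniform lower bound on $\phi$; taking the infimum over $\xi\in\mathbb{S}^{d-1}$ then gives the semiconvexity estimate \eqref{Eq:SConcave}.

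\textbf{Time derivative.} Combining the previous bounds with the PDE $\partial_t u = -\nu \Delta u - |\nabla u|^2/2 - F$: the lower Hessian bound forces $\Delta u \geq -Cd\|F\|_{\mathscr C^2}$, so $-\nu\Delta u \leq C\nu d\|F\|_{\mathscr C^2}$, giving immediately the upper bound $\partial_t u \leq C(1+\nu)\|F\|_{\mathscr C^2}$. For the matching lower bound one needs an upper Hessian bound, which would follow either from a parallel Bernstein-type argument (applied, for instance, to the combination $\Delta u + \lambda |\nabla u|^2$) or from standard interior parabolic Schauder estimates now that $\nabla u$ and $F$ are under control.

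The main obstacle will be achieving the $T$-uniform gradient bound. A direct application of the maximum principle to $w$ without any correction yields a bound growing exponentially in $T-t$; avoiding this requires a delicate balance between the dissipative $|D^2u|^2$ term, the transport $\nabla u \cdot \nabla w$, and the nonlinear source $2\nabla u \cdot \nabla F$, which is where the quadratic Hamiltonian structure plays its role. An alternative route would be via the Hopf--Cole transform $v := e^{u/(2\nu)}$, which linearises the HJB into $-\partial_t v - \nu \Delta v = Fv/(2\nu)$ with $v(T,\cdot)=1$: then $T$-uniform control on $\nabla u = 2\nu \nabla v / v$ would follow from Li--Yau-type gradient estimates on positive solutions of the associated linear parabolic equation on the compact torus.
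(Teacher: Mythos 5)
Your semiconvexity argument is exactly the paper's central step (differentiate \eqref{Eq:HJB2} twice in a direction $\xi$, use $|\nabla\partial_\xi u|^2\geq(\partial^2_{\xi\xi}u)^2$, apply the maximum principle to the resulting Riccati-type inequality with terminal datum $0$), and it is correct. The genuine gap is in your treatment of the gradient estimate, which you attack separately by a Bernstein computation on $w=|\nabla u|^2$ and leave unresolved: you yourself flag that the uncorrected maximum principle gives a bound growing in $T$, and the corrections you propose do not close it. The additive correction $w+\Lambda u$ fails because $\|u\|_{L^\infty}$ itself grows linearly in $T$ (indeed $u\sim\overline\lambda(T-t)$ in the regime the paper cares about), so it cannot produce a $T$-uniform bound; and the Hopf--Cole/Li--Yau route produces constants that degenerate as $\nu\to 0$, whereas the proposition requires $C$ uniform in $\nu$ precisely because the first-order case is obtained by vanishing viscosity.

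The observation you are missing — and the reason the paper's proof is short — is that on the torus the gradient bound is a \emph{consequence} of the semiconvexity bound you already proved, with no further PDE argument. If $\partial^2_{\xi\xi}u(t,\cdot)\geq -C_0$ for every unit direction $\xi$, then for fixed $t$ and $x$ the function $h(s):=\partial_\xi u(t,x+s\xi)+C_0 s$ is nondecreasing, while $\int_0^{1}\partial_\xi u(t,x+s\xi)\,ds=0$ by periodicity; writing $\partial_\xi u(t,x)=\int_0^1\bigl(\partial_\xi u(t,x)-\partial_\xi u(t,x+s\xi)\bigr)ds$ and using the monotonicity of $h$ in both directions gives $|\partial_\xi u(t,x)|\leq C_0/2$. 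This is what the paper means by ``standard results about semi-convex functions'' on a bounded (periodic) domain, and it makes your entire first paragraph unnecessary. Once $\nabla u$ and the lower Hessian bound are in hand, the bound on $\partial_t u$ is read off from the equation as you do; your remark that the lower bound on $\partial_t u$ requires some control of $\Delta u$ from above is fair — the paper is also terse on this point, asserting a two-sided Hessian bound from semiconvexity — but this is a secondary issue compared with the missing semiconvexity-implies-Lipschitz step. (A last minor point: the maximum principle in the semiconvexity step yields $\phi\geq-\|F\|_{\mathscr C^2}^{1/2}$, not $-C'\|F\|_{\mathscr C^2}$; both you and the paper elide this, and it is harmless in the application since $F=\rho\star\theta$ is uniformly bounded.)
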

The general strategy of proof of this estimate follows the usual semi-concavity method, see for instance \cite{Bardi_2023}.
\begin{proof}[Proof of Proposition \ref{Pr:RegularityHJB}]
We will actually show that \eqref{Eq:Goal} holds for any $\nu>0$, with a constant $C$ independent of $\nu$. Since $T$ is considered fixed, we drop the subscript $T$ in $u_T$. Since $u$ is a viscosity solution of \eqref{Eq:HJB2} when $\nu=0$, it then suffices to take the limit $\nu \to 0$ to obtain the required result. Thus, a positive $\nu>0$ is given. Picking a direction $\xi\in \mathbb S^{d-1}$ and letting $W:=\frac{\partial^2 u}{\partial \xi^2}$,  the fact that $W^2\leq \Vert \n_\xi \n_x u\Vert^2$ implies that $W$ is a solution of 
\[ \begin{cases}
-\partial_t W-\nu \Delta W-\langle \n_xu,\n_xW\rangle- W^2\geq -\Vert F\Vert_{L^\infty(0,T;\mathscr C^2(\T))}
\\ W(T,\cdot)=0.
\end{cases}
\] The maximum principle implies
\[ W\geq - \Vert F\Vert_{L^\infty(0,T;\mathscr C^2(\T))}.\] As a consequence, $u$ is uniformly semi-convex in $x$. Since $\T$ is a bounded domain, standard results about semi-convex functions (see for instance \cite{Alberti_1992}) imply that $V$ is uniformly (in $x$, in $T$ and in $t\in[0,T]$) Lipschitz-continuous with respect to the $x$ variable. Thus, there exists a constant $C$ such that 
\begin{equation}\label{Eq:HJBInterm}
\Vert \n^2_xu\Vert_{L^\infty((0,T)\times \T)}+\Vert \n_xu\Vert_{L^\infty((0,T)\times \T)}\leq C\Vert F\Vert_{L^\infty(0,T;\mathscr C^{2}(\T))}.\end{equation}Plugging \eqref{Eq:HJBInterm} in \eqref{Eq:HJB2} we conclude that
\[ \Vert \partial_t u\Vert_{L^\infty((0,T)\times \T)}\leq C(1+\nu)\Vert F\Vert_{L^\infty(0,T;\mathscr C^{2}(\T))}.\]

\end{proof}

We can finally prove Proposition \ref{Pr:UniformRegularityEstimates}.

\begin{proof}[Proof of Proposition \ref{Pr:UniformRegularityEstimates}]
 First of all, integrating the equation in $m$ in space yields
 \[ \forall t\in [0,T]\,, \int_\T m(t,\cdot)=\int_\T m_0.\] Proposition \ref{Pr:RegularityNonLocal} gives the required estimates on $\theta_{\e,m}$. Plugging the estimates in $\theta_{\e,m}$ in the Hamilton-Jacobi-Bellman equation and using Proposition \ref{Pr:RegularityHJB} we obtain the desired bounds on $V$.
 
\end{proof}

\subsection{The ergodic system}
\label{subsec:exist_KAM}
The existence of a solution to the ergodic MFG system \eqref{Eq:ErgodicMFG}
follows from a direct adaptation of the arguments of \cite{CardaliaguetLasryLionsPorretta} in the second-order case, and we will not repeat them here. We only consider the first-order setting  whose proof relies on the weak KAM formula from \cite{CardaliaguetKAM} , itself based on \cite{Fathi_1998,FATHI19971043}.

\begin{proof}[Proof of Theorem \ref{Th:ExistenceMFGCompet} for \eqref{Eq:ErgodicMFG} with $\nu=0$.]
Consider, for a fixed $m$, the Lagrangian 
\[ L_m(x,v):=\frac{\Vert v\Vert^2}2-(\rho\star \overline\theta_{\e,m})\text{ where }\begin{cases}-\mu\Delta\overline\theta_{\e,m}-\overline\theta_{\e,m}(K-\e(\rho\star m)-\overline\theta_{\e,m})=0&\text{ in }\T\,, 
\\ \overline\theta_{\e,m}\geq 0\,, \neq 0.\end{cases}\]
Let $E_m$ be the set of probability measures on the phase-space $\T\times \R^d$
that are invariant under the flow of the differential equation 
\[
\begin{cases}
-\frac{d}{dt} \partial_v L_m(x,x')+\partial_x L_m(x,x')=0\,, 
\\ x(0)=x\,, x'(0)=v.
\end{cases}
\]
That $E_m$ is non-empty is a consequence of the Bogoliubov theorem. Let 
\[ M_m:=\mathrm{arg\, min}\left(E_m\ni \eta\mapsto \iint_{\T\times \R^d} L(x,v)d\eta(x,v)\right).\]
Define $C_m:=\Pi\# M_m$, where $\Pi$ is the projection $\Pi:\T\times \R^d\ni (x,v)\mapsto x$
and $\#$ the pushforward operator of measures.
As $M_m$ is convex, $C_m$ is a convex subset of the set of probability measures. As $m\mapsto \rho\star\theta_m$ is continuous, $m\mapsto C_m$ has a compact graph, and, consequently, a fixed-point. It is then readily checked, as in \cite{CardaliaguetKAM}, that, if we define 
\begin{equation}\label{Eq:WeakKAM1}\overline\lambda:=-\min_{\eta\in E_{\overline m}}\iint_{\T\times \R^d} L_{\overline m}(x,v)d\eta(x,v)\end{equation} then $\overline\lambda$ is the associated ergodic constant of the Hamiltonian, thereby concluding the proof.
\end{proof}

\section{Proof of Theorem \ref{Th:UniquenessMFGCompet}: uniqueness results}
In order to derive the uniqueness of solutions, we will use the Lasry-Lions monotonicity criterion.
From the point of view of presentation, the focus will be on the derivation
of the (strong) monotonicity property of Proposition \ref{Pr:EpsilonMonotone},
and we will show how to use it to derive uniqueness for solutions of \eqref{Eq:MFGCompet} in the second-order case; in this second step we will use the standard computations of \cite{LasryLions1}, and we detail it to underline the way \eqref{Eq:Mono} is used. The first-order case will be treated by adapting the arguments of \cite{CardaliaguetKAM}. Finally, we do not detail the proof of uniqueness in the ergodic setting as it follows by exactly the same type of arguments.

\subsection{Proof of Proposition \ref{Pr:EpsilonMonotone}}
\label{subsec:exist_1}
As already pointed out, Proposition \ref{Pr:EpsilonMonotone}
plays a central role in the uniqueness of solutions to \eqref{Eq:MFGCompet}.
The strategy of proof is novel and consists in one of 
the main contribution of the present work.
It relies on a concavity-type property,
that is related to methods introduced and developed in the context of Nash equilibria
(for a game with finitely many players) in \cite{Mazari_2022}.
\begin{proof}[Proof of Proposition \ref{Pr:EpsilonMonotone}]
    Let $T>0$, fix $m_1\in C^0([0,T],\mathcal{P}(\T))$ and consider the functional:
    \[g: \mathcal C^0([0,T];\mathcal{P}(\T))^2\ni
     m\mapsto \iint_{(0,T)\times \T}(\rho\star \theta_{\e,m_1}-\rho \star\theta_{\e,m})d(m_1-m).\]
    Inequality \eqref{Eq:Mono} will be obtained as
    a consequence of  the strict concavity of $g$.
    
    Define $\e\dot{\theta}_{\e,m}$ and $\e^2\ddot{\theta}_{\e,m}$ 
    as the first and second Gateaux derivatives
    of $ m\mapsto \theta_{\e,m}$ at a given $m$ in the direction $h$. The reason we chose the scaling $\e$ and $\e^2$ is to make the orders of magnitude clearer in subsequent computations.These functions satisfy
 \begin{align}\label{Eq:DotTheta}
     &\begin{cases}
 \partial_t\dot\theta_{\e,m} -\mu\Delta\dot\theta_{\e,m} -\dot\theta_{\e,m} \left(K-\e(\rho\star m)-2\theta_{\e,m} \right)
         =-(\rho\star h)\theta_{\e,m}, 
 \\ \dot\theta_{\e,m}(0,\cdot)=0,\end{cases}
     \\
    \label{Eq:DdotTheta}
     &\begin{cases}
 \partial_t\ddot\theta_{\e,m} -\mu\Delta\ddot\theta_{\e,m} -\ddot\theta_{\e,m} \left(K-\e(\rho\star m)-2\theta_{\e,m} \right)
         =-2(\rho\star h)\dot\theta_{\e,m}-2\dot\theta_{\e,m}^2, 
 \\ \ddot\theta_{\e,m}(0,\cdot)=0.
\end{cases}
 \end{align}
Likewise, the first and second order Gateaux derivative of $g$ at $m$ in the direction $h$ are
\begin{align}\label{Eq:Dotg0}
    &\dot g( m)[h]=\e\iint_{(0,T)\times \T} \dot\theta_{\e,m}(\rho\star(m-m_1))
    +\e\iint_{(0,T)\times \T} (\theta_{\e,m}-\theta_{\e,m_1})(\rho\star h),
\\
    \label{Eq:Ddotg0}
    &\ddot g( m)[h,h]=\e^2\iint_{(0,T)\times \T}\ddot\theta_{\e,m}(\rho\star(m-m_1))
    +2\e\iint_{(0,T)\times \T}\dot\theta_{\e,m}(\rho\star h).
\end{align}
    From the expression of $\dot{g}$, it is easy to check that $m=m_1$
    is a critical point of $g$.
    Now, assume that 
    \begin{multline}
        \label{Eq:g_concave}
        \text{there exists } \e_0,\delta, a>0
        \text{ such that, for $\e\leq\e_0$, and any $T>0$,  }
       \\ \ddot g(m)[h,h]\leq -\frac{a}2\e\iint_{(0,T)\times\T}\dot{\theta}_{\e,m}^2,
    \end{multline}
    where $(\e,a)$ can be chosen uniform in $\nu$ and 
    $\delta$ is the constant in Assumption \ref{H:Theta0}.
    Then, \eqref{Eq:Mono} is obtained using a Taylor integral formula as follows:
    take $h=m-m_1$, $m_{\xi}=\xi m+(1-\xi)m_1$ and $\dot\theta_{\xi}=\dot\theta_{\e,m_{\xi}}$ for $\xi\in[0,1]$,
\begin{align*}
    \iint_{(0,T)\times \T} \Big(\rho \star \theta_{\e,m}-\rho\star\theta_{\e,m_1}\Big)d (m-m_1)
    &=
g(m)-g(m_1)    \\
    &=
    \int_{0}^{1} (1-\xi)\ddot g( m_{\xi})[m-m_1,m-m_1]d\xi.
    \\
    &
    \leq -\frac{a}2\e
    \int_{0}^{1}(1-\xi)\int_0^T\int_\T \dot\theta_{\xi}^2\,dx\,dt\,d\xi,
\end{align*}
    and, switching $m$ and $m_1$, we get
    \begin{equation}
        \iint_{(0,T)\times \T} \Big(\rho \star \theta_{\e,m}-\rho\star\theta_{\e,m_1}\Big)d (m-m_1)
        \leq 
        -\frac{a}2\e
        \int_{0}^{1}\xi\int_0^T\int_\T \dot\theta_{\xi}^2\,dx\,dt\,d\xi,
    \end{equation}
    so that we obtain 
\[  \iint_{(0,T)\times \T} \Big(\rho \star \theta_{\e,m}-\rho\star\theta_{\e,m_1}\Big)d (m-m_1)\leq -a\e\int_0^1\iint_\TT\dot\theta^2_\xi.\]   Estimate \eqref{Eq:Mono} then follows by observing that 
\[-\int_0^1\iint_\TT\dot\theta^2_\xi\leq-\iint_\TT \left(\int_0^1 \dot\theta_\xi\right)^2=-\iint_\TT\left(\theta_{\e,m_1}-\theta_{\e,m}\right)^2.
\]

    Therefore, the remaining of the present proof consists in proving \eqref{Eq:g_concave}.
    Let us first rewrite $\ddot g$ in a more convenient form by using the adjoint state $p_{\e,m}$ defined as the unique solution of the backwards parabolic equation\begin{equation}\label{Eq:Adjoint}
\begin{cases}
-\partial_t p_{\e,m}-\mu\Delta p_{\e,m}-p_{\e,m}(K-\e(\rho\star m)-2\theta_{\e,m})=\rho\star(m_1-m)&\text{ in }(0,T)\times \T\,, 
\\ p_m(T,\cdot)\equiv 0&\text{ in }\T.
\end{cases}
\end{equation}
Using $\ddot\theta_{\e,m}$ as a test function in the weak formulation of \eqref{Eq:Adjoint}
    and $p_{\e,m}$ as a test function in the weak formulation of \eqref{Eq:DdotTheta} we obtain 
\begin{equation}\label{Eq:Ddotg1}
\ddot g( m)[h,h]=2\e\iint_{(0,T)\times \T}
    \dot\theta_{\e,m}
    \left(
    (\rho\star h)(1-\e p_{\e,m})
    -\e p_{\e,m}\dot\theta_{\e,m}\right)\,dt\,dx.
\end{equation}
    Observe that when $\e$ is small both
    $\dot\theta_{\e,m}$ and $p_{\e,m}$ are of order 1. 
    Let us prove that $\ddot g$ is negative for $\e$ small enough.
    Using  \eqref{Eq:DotTheta}, we have
\[ \rho\star h=\frac{-\partial_t\dot\theta_{\e,m}+\mu\Delta\dot\theta_{\e,m}+V_{\e,m}\dot\theta_{\e,m}}{\theta_{\e,m}}
\;\text{ with }\;
V_{\e,m}:=K-\e(\rho\star m)-2\theta_{\e,m}.\]
Defining $\psi_{\e,m}:=\frac{1-\e p_{\e,m}}{\theta_{\e,m}}$, we deduce
\[\ddot g( m)[h,h]
:=2\e\iint_{(0,T)\times \T} \psi_{\e,m}\dot\theta_{\e,m}
\left(-\partial_t\dot\theta_{\e,m}+\mu\Delta\dot\theta_{\e,m} + V_{\e,m}\dot\theta_{\e,m}\right)
-2\e^2\iint_{(0,T)\times \O}p_{\e,m}\dot\theta_{\e,m}^2.\]
Developing and integrating by parts, we obtain
\begin{align*}
    2\iint_{(0,T)\times \T} \psi_{\e,m}\dot\theta_{\e,m}\left(-\partial_t\dot\theta_{\e,m}+\mu\Delta\dot\theta_{\e,m}\right)=&\iint_{(0,T)\times \T}\psi_{\e,m}\left(-\partial_t\dot\theta_{\e,m}^2+\mu\Delta\left(\dot\theta_{\e,m}^2\right)-2\mu |\n \dot\theta_{\e,m}|^2
\right)
    \\=&-2\mu \iint_{(0,T)\times \T }\psi_{\e,m}|\n \dot\theta_{\e,m}|^2-\int_\T \psi_{\e,m}(T,\cdot)\dot\theta_{\e,m}^2(T,\cdot)
    \\&+\iint_{(0,T)\times \T} \dot\theta_{\e,m}^2\left(\partial_t\psi_{\e,m}+\mu\Delta\psi_{\e,m}\right).
\end{align*}
From this and the fact that
$\psi_{\e,m}(T,\cdot)=\frac1{\theta_{\e,m}(T,\cdot)}>0$,
we obtain
\begin{equation*}
  \frac1{\e}  \ddot g( m)[h,h]
    \leq
    \iint_{(0,T)\times \T} \dot\theta_{\e,m}^2\left(\partial_t\psi_{\e,m}+\mu\Delta \psi_{\e,m}
    +2\psi_{\e,m}V_{\e,m}-2\e p_{\e,m}\right)
    -2\mu \iint_{(0,T)\times \T}\psi_{\e,m}|\n \dot\theta_{\e,m}|^2.
\end{equation*}
Introduce, for any $\e\geq 0$ and any $t\geq 0$ the first eigenvalue $\lambda_\e(t)$ defined by the Rayleigh quotient
\begin{equation}\label{Eq:RayleighEpsilon}
\lambda_\e(t):=\min_{u\in W^{1,2}(\T)\,, \int_\T u^2=1}\mu \int_\T \psi_{\e,m}|\n u|^2
    -\frac12\int_\T u^2\left(\partial_t\psi_{\e,m}+\mu\Delta \psi_{\e,m}+2V_{\e,m}\psi_{\e,m}-2\e p_{\e,m}\right).
\end{equation}
To obtain \eqref{Eq:g_concave},
it only remains to prove that,
under the smallness assumptions of the theorem, there exists $\underline\lambda>0$ such that
\begin{equation}\label{Eq:Count}
    \lambda_\e(t)\geq \underline\lambda>0
    \;\text{ for all }\;
    t\in[0,T].
\end{equation}
We thus focus on proving the latter statement.
Adapting \emph{mutatis mutandis} the proof of Proposition~\ref{Pr:UniformRegularityEstimates},
there exists a constant $C>0$ that only depends on $\mu$ but not on $(\nu,T,m_1)$, such that 
\begin{equation}\label{Eq:UniformAdjointEstimates}
\Vert p_{\e,m}\Vert_{L^\infty(0,T;\mathscr C^2(\T))}\leq C.\end{equation}  In particular, combined with Proposition \ref{Pr:UniformRegularityEstimates} this implies that 
\begin{equation}\label{Eq:ConvergencePsi}
\lim_{\e \to 0}\sup_{T\geq 0}\left\Vert \psi_{\e,m}-\frac1{\theta_{\e,m}}\right\Vert_{L^\infty(0,T;\mathscr C^2(\T))}+\left\Vert \partial_t \psi_{\e,m}-\partial_t\frac1\theta_{\e,m} \right\Vert_{L^\infty((0,T)\times \T)}=0.\end{equation}
This implies that, uniformly in $T$ and in $t\in [0,T]$, we have 
\begin{align*} 
    \lim_{\e \to 0} \left(\partial_t\psi_{\e,m}+\mu\Delta \psi_{\e,m}
    +2V_{\e,m}\psi_{\e,m}-2\e p_{\e,m}\right)
    &=\mu\Delta(\overline\theta_K^{-1})
    +K\overline\theta_K^{-1}-2
\\&=3\frac{K-\overline\theta_K}{\overline\theta_K}
    +\frac32\cdot\mu\frac{|\n \overline\theta_K|^2}{\overline\theta_K^3}
    +\left(\frac12\cdot\frac{\mu|\n \overline\theta_K|^2}{\overline\theta_K^3}-2\right)
\\&=:2\Phi_0+\left(\frac12\cdot\frac{\mu|\n \overline\theta_K|^2}{\overline\theta_K^3}-2\right)
\end{align*}
Because of \eqref{Eq:ConvergencePsi} we further deduce that, uniformly in $T$, 
\begin{equation}\label{Eq:CvEigenvalue}
\lim_{\e\to 0}\sup_{t\in[0,T]}\vert\lambda_\e(t)-\lambda_0\vert=0,
\end{equation}
where $\lambda_0$ is the first eigenvalue defined by the Rayleigh quotient
\begin{equation}\label{Eq:Lambda0}
\lambda_0=\min_{u\in W^{1,2}(\T)\,, \int_\T u^2=1}\mu\int_\T \frac1{\overline \theta_K}|\n u|^2-\int_\T u^2\left(\Phi_0+\left(\frac14\cdot\frac{\mu|\n \overline\theta_K|^2}{\overline\theta_K^3}-1\right)\right).
\end{equation}
It is proven in \cite[First analysis of $\overline \xi$, p. 34]{Mazari_2022}
that 
\[\min_{u\in W^{1,2}(\T)\,, \int_\T u^2=1}\mu\int_\T \frac1{\overline\theta_K}|\n u|^2-\int_\T u^2\Phi_0=0.\]
Moreover, in one-dimension,
as observed in \cite[pp. 54-55]{Mazari_2022}, one might use \cite[Estimate 2.2]{BaiHeLi} to have 
\[\frac14\cdot\frac{\mu|\n \overline\theta_K|^2}{\overline\theta_K^3}-1\leq -\frac56.
\]
As demonstrated in \cite{Mazari_2022},
such an estimate also holds in higher dimensions
under the additional assumption that 
$\Vert K-\int_\T K\Vert_{L^\infty(\T)}$ is small enough, whence the assumption on the existence of $\delta>0$
appearing in Assumption \ref{H:Theta0}.
Therefore, we obtain $\lambda_0>0$,
which, combined with the continuity property \eqref{Eq:Lambda0},
yields the existence of a suitable $\e_0$.
Then, \eqref{Eq:Count} holds and we obtain \eqref{Eq:Mono} with $a=\frac12$.
\end{proof}

\subsection{Uniqueness for the MFG system}

We begin with the proof of the second-order case.
\begin{proof}[Proof of Theorem \ref{Th:UniquenessMFGCompet}, second-order case]
Let $\e_0=\e_0(\beta,T)>0$ be given by Proposition \ref{Pr:EpsilonMonotone}. Let us assume that there exists $\e\leq \e_0$ such that there are two distinct solutions  $(u^1,m^1,\theta_{\e,m^1})$  and $(u^2,m^2,\theta_{\e,m^2})$  of \eqref{Eq:MFGCompet}. Define for the sake of readability $\mathscr S(m^i):=\rho\star\theta_{\e,m^i}$,
so that the conclusion of Proposition \ref{Pr:EpsilonMonotone} implies 
\begin{equation}\label{Eq:Pickup} \iint_\TT \left(\mathscr S(m^1)-\mathscr S(m^2)\right)d(m^1 -m^2)\leq 0.\end{equation}
Let $\overline V:=u^1-u^2$ and $\overline m:=m^1-m^2$. Clearly, $(\overline V\,, \overline m)$ satisfy
\begin{equation}\label{Eq:Overlinev}-\partial_t \overline V-\nu \Delta \overline V=\frac{\Vert\n_x u^1\Vert^2}2-\frac{\Vert \n_x u^2\Vert^2}2+\mathscr S(m^1)-\mathscr S(m^2)\end{equation} and 
\begin{equation}\label{Eq:Overlinem}
\partial_t \overline m-\nu \Delta \overline m=-\nabla\cdot(\n_xu^1m^1 )+\nabla\cdot(\n_xu^2m^2 ).
\end{equation}
Using $\overline m$ as a test function in \eqref{Eq:Overlinev} and $\overline V$ as a test function in \eqref{Eq:Overlinem} we obtain (similar to \cite{LasryLions3})
\begin{equation*}
    \iint_\TT \left(\mathscr S(m^1)-\mathscr S(m^2)\right)d(m^1 -m^2)
    =
    \frac12\iint_\TT
    \Vert\nabla_x\overline V\Vert^2(m^1+m^2)
    \geq0.
\end{equation*}
This implies that the inequality in \eqref{Eq:Pickup} is in fact an equality.
By \eqref{Eq:Mono}, this yields $\theta_{\e,m^1}=\theta_{\e,m^2}$, so that $u^1=u^2$ by uniqueness of the
solution to the HJB equation, finally, we get $m^1=m^2$.
\end{proof}
We now deal with the first-order case.
\begin{proof}[Proof of Theorem \ref{Th:UniquenessMFGCompet}, first-order case]
We follow the methodology of  \cite[Theorem 3.3, Proof of uniqueness]{CardaliaguetKAM}. Argue by contradiction and assume that there exists $\e\leq \e_0$ such that there are two distinct solutions  $(u^1,m^1,\theta_{\e,m^1})$  and $(u^2,m^2,\theta_{\e,m^2})$  of \eqref{Eq:MFGCompet}
let $\xi\in \mathscr D(\R^d)$ be a symmetric, non-negative kernel supported in the unit ball with integral 1 and set, for any $\eta>0$, $\xi_\eta:=(1/\eta^d)\xi(\cdot/\eta)$. Define 
\[ m^i_\eta:=\xi_\eta\star m^i\,, W^i_\eta:=\frac{\xi_\eta\star(m^i\n_x u^i)}{m^i_\eta}\,, i=1\,, 2.\]
Observe that this definition is licit as, by Proposition \ref{Pr:UniformRegularityEstimates}, $\n_x u^i$ is continuous $m^i$ a.e., so that   $m^i\n_x u^i$ is a Radon measure. 

We have, by direct computations, 
\[ \partial_t m^i_\eta+\nabla\cdot( m^i_\eta W^i_\eta)=0.\] Now, define 
\[ \overline m_\eta:=m^1_\eta-m^2_\eta,\] so that 
\[ \partial_t \overline m_\eta=-\nabla\cdot(m^1_\eta W_1^\eta)+\nabla\cdot(m_\eta^2 W^2_\eta).\] Multiplying by $\overline u:=u^1-u^2$ and integrating by parts in time we derive
\begin{align*}
\iint_{(0,T)\times \T }(-\partial_t \overline u)\overline m_\eta&=\iint_{(0,T)\times \T}\overline u\left(-\nabla\cdot(m^1_\eta W_1^\eta)+\nabla\cdot(m_\eta^2 W^2_\eta)\right)
\\&=\iint_{(0,T)\times \T}\overline u\left(-\nabla\cdot(\xi_\eta\star(m^1 \n_x u^1))+\nabla\cdot(\xi_\eta\star(m^2\n_x u^2))\right)
\\&=\iint_{(0,T)\times \T }\langle \n_x \overline u,\xi_\eta\star (m^1\n_x u^1)\rangle-\iint_{(0,T)\times \T}\langle \n_x \overline u,\xi_\eta\star(m^2\n_x u^2)\rangle
\\&=\iint_{(0,T)\times \T} m_\eta^1 \langle \n_x \overline u\, ,  \n_x u^1\rangle-\iint_{(0,T)\times \T}m_\eta^2\langle \n_x \overline u, \n_x u^2\rangle
\\&+\iint_{(0,T)\times \T}\langle \n_x\overline u\,, \xi_\eta\star(m^1\n_x u^1)-(\xi_\eta\star m^1)\n_x u^1-\xi_\eta\star(m^2\n_x u^2)+(\xi_\eta\star m^2) \n_x u^2\rangle.
\end{align*}
Define 
\[ R_\eta:=\iint_{(0,T)\times \T}\langle \n_x\overline u\,, \xi_\eta\star(m_1\n_x u^1)-m_1\xi_\eta\star\n_x u^1-\xi_\eta\star(m^2\n_x u^2)+m^2 \xi_\eta\star \n_x u^2\rangle.\]
Using the fact that $\overline u$ is a viscosity solution of \eqref{Eq:Overlinev} with $\nu=0$ we can rewrite the identity above as
\begin{multline*} R_\eta=\iint_{(0,T)\times \T}(\mathscr S(m^1)-\mathscr S(m^2))(m_\eta^1-m_\eta^2)
\\+\iint_{(0,T)\times \T}\overline m_\eta\left(\frac{\Vert \n_x u^1\Vert^2}{2}-\frac{\Vert \n_x u^2\Vert^2}2\right)
\\-\iint_{(0,T)\times \T} m_\eta^1 \langle \n_x \overline u\, ,  \n_x u^1\rangle+\iint_{(0,T)\times \T}m_\eta^2\langle \n_x \overline u, \n_x u^2\rangle.
\end{multline*}
Developing $\overline m_\eta\,, \overline u$ and grouping the terms we obtain 
\begin{multline}
R_\eta=-\iint_{(0,T)\times \T}\left(\frac{\Vert \n_x u^1\Vert^2+\Vert \n_x u^2\Vert^2}2-\langle \n_x u^1,\n_x u^2\rangle\right)(m^1_\eta+m^2_\eta)\\+\iint_\TT \left(\mathscr S(m^1)-\mathscr S(m^2)\right)d(m^1_\eta-m^2_\eta).
\end{multline}
As $m^i\geq 0$, we have $m_\eta^i\geq 0$ for $i=1\,, 2$. Thus, 
\begin{equation}\label{Eq:Etabound} R_\eta\leq \iint_\TT \left(\mathscr S(m^1)-\mathscr S(m^2)\right)d(m^1_\eta-m^2_\eta).\end{equation} Now we have, on the one-hand, 
\begin{equation}\label{Eq:Seta}\iint_\TT \left(\mathscr S(m^1)-\mathscr S(m^2)\right)d(m^1_\eta-m^2_\eta)\underset{\eta\to 0}\rightarrow \iint_\TT \left(\mathscr S(m^1)-\mathscr S(m^2)\right)d(m^1-m^2)\leq 0\end{equation} and, on the other hand,
\begin{equation}\label{Eq:Reta}R_\eta\underset{\eta\to 0}\rightarrow 0.\end{equation} Indeed, we can brutally bound $R_\eta$ as 
\begin{multline*} |R_\eta|\leq C \iint_{||x-y||\leq \eta\,, (x,y)\in \T\times \R^d} |\n_x u^1(x)-\n_x u^1(y)| dm^1(x)dy\\+\iint_{||x-y||\leq \eta\,, (x,y)\in \T\times \R^d} |\n_x u^2(x)-\n_x u^2(y)| dm^2(x)dy.\end{multline*}However, as we observed, $\n_x u^i$ is continuous a.e. on the support of $m^i$, thereby establishing \eqref{Eq:Reta}. Combined with \eqref{Eq:Seta} and \eqref{Eq:Etabound} this entails
\[ \iint_{(0,T)\times \T} (\mathscr S(m^1)-\mathscr S(m^2))d(m^1-m^2)=0.\] We can then conclude in the same way as in the second-order case.
\end{proof}

\subsection{Uniqueness in the ergodic case}
Let us first state the ergodic counterpart to Proposition \ref{Pr:EpsilonMonotone}:
\begin{proposition}\label{Pr:EpsilonMonotoneErgodic}
    Assume \ref{H:K} and \ref{H:Theta0},
    let $\mu>0$.
    There exist $\e_0,\delta,a>0$,
    where $\delta$ is the constant in \ref{H:Theta0}
    such that,
    for any $\e\leq \e_0$,
    the following monotonicity condition holds:
    for $m,m'\in \mathcal P(\T)$,
\begin{equation}
    \label{Eq:Mono2}
    \int_{\T} \Big(\rho \star \overline\theta_{\e,m}-\rho\star\overline\theta_{\e,m'}\Big)d (m-m')
    \leq -\e a\int_\T \left(\overline \theta_{\e,m}-\overline\theta_{\e,m'}\right)^2.
\end{equation}
    \end{proposition}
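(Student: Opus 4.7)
The plan is to mimic the proof of Proposition \ref{Pr:EpsilonMonotone} verbatim, replacing every parabolic equation by its elliptic counterpart and dropping all time integrals. Fix $m_1\in\mathcal{P}(\T)$ and consider
\[g(m):=\int_{\T}\bigl(\rho\star\overline\theta_{\e,m_1}-\rho\star\overline\theta_{\e,m}\bigr)\,d(m_1-m),\]
whose strict concavity (quantified against $\dot{\overline\theta}_{\e,m}$) will yield \eqref{Eq:Mono2}: since $m_1$ is a critical point of $g$, a Taylor expansion along $m_\xi:=\xi m+(1-\xi)m_1$ combined with a symmetrisation in $(m,m_1)$ and Jensen's inequality $\int_\T(\overline\theta_{\e,m}-\overline\theta_{\e,m_1})^2\le\int_0^1\int_\T \dot{\overline\theta}^2_{\e,m_\xi}\,d\xi$ converts a uniform bound $\ddot g(m)[h,h]\le-\frac{a}{2}\e\int_\T\dot{\overline\theta}_{\e,m}^2$ into the desired monotonicity.

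The first and second Gateaux derivatives $\e\dot{\overline\theta}_{\e,m}$, $\e^2\ddot{\overline\theta}_{\e,m}$ of $m\mapsto\overline\theta_{\e,m}$ satisfy elliptic analogues of \eqref{Eq:DotTheta}–\eqref{Eq:DdotTheta} with operator $-\mu\Delta-V_{\e,m}$, $V_{\e,m}:=K-\e(\rho\star m)-2\overline\theta_{\e,m}$. These are well-posed for $\e$ small, because $\overline\theta_K$ is a positive eigenfunction of $-\mu\Delta-(K-\overline\theta_K)$ associated with eigenvalue $0$, so by strict monotonicity of $\lambda_1$ with respect to the potential, $\lambda_1(\mu,K-2\overline\theta_K)>0$, and a perturbation argument (together with the $\mathscr C^2$ bound \eqref{Eq:Est3} on $\overline\theta_{\e,m}-\overline\theta_K$ deduced from Proposition \ref{Pr:RegularityNonLocal} applied in the stationary setting) gives $\lambda_1(\mu,V_{\e,m})\ge c>0$ uniformly in $m\in\mathcal{P}(\T)$. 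Introduce the adjoint $p_{\e,m}$ solving $-\mu\Delta p_{\e,m}-V_{\e,m}p_{\e,m}=\rho\star(m_1-m)$, which enjoys a uniform $\mathscr C^2$ bound analogous to \eqref{Eq:UniformAdjointEstimates} by elliptic regularity.

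Using $p_{\e,m}$ and $\ddot{\overline\theta}_{\e,m}$ as test functions in each other's equations one obtains
\[\ddot g(m)[h,h]=2\e\int_{\T}\dot{\overline\theta}_{\e,m}\Bigl((\rho\star h)(1-\e p_{\e,m})-\e p_{\e,m}\dot{\overline\theta}_{\e,m}\Bigr)dx.\]
Expressing $\rho\star h=(\mu\Delta\dot{\overline\theta}_{\e,m}+V_{\e,m}\dot{\overline\theta}_{\e,m})/\overline\theta_{\e,m}$ through the equation for $\dot{\overline\theta}_{\e,m}$, setting $\psi_{\e,m}:=(1-\e p_{\e,m})/\overline\theta_{\e,m}$, and integrating by parts (\emph{without} any boundary terms, since $\T$ is the torus and no time boundary is present), one arrives at
\[\frac{1}{\e}\ddot g(m)[h,h]=\int_{\T}\dot{\overline\theta}_{\e,m}^2\bigl(\mu\Delta\psi_{\e,m}+2V_{\e,m}\psi_{\e,m}-2\e p_{\e,m}\bigr)-2\mu\int_{\T}\psi_{\e,m}|\n\dot{\overline\theta}_{\e,m}|^2,\]
which is exactly the expression of the parabolic proof with the terms in $\partial_t\psi_{\e,m}$ and $\psi_{\e,m}(T,\cdot)\dot{\overline\theta}_{\e,m}(T,\cdot)^2$ dropped.

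The conclusion then reduces to checking positivity of the spatial Rayleigh quotient
\[\lambda_\e:=\min_{\substack{u\in W^{1,2}(\T)\\ \int_\T u^2=1}}\mu\int_{\T}\psi_{\e,m}|\n u|^2-\frac12\int_{\T}u^2\bigl(\mu\Delta\psi_{\e,m}+2V_{\e,m}\psi_{\e,m}-2\e p_{\e,m}\bigr).\]
As $\e\to 0$, the uniform bounds on $p_{\e,m}$ and $\overline\theta_{\e,m}-\overline\theta_K$ give $\psi_{\e,m}\to 1/\overline\theta_K$ in $\mathscr C^2(\T)$ and $V_{\e,m}\to K-2\overline\theta_K$ in $\mathscr C^0(\T)$, so $\lambda_\e\to\lambda_0$ with $\lambda_0$ the \emph{same} constant as in \eqref{Eq:Lambda0} (the $\partial_t\psi_{\e,m}$ term vanishes in the parabolic limit anyway, so the two limiting eigenvalue problems coincide). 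The calculation reproduced at the end of the proof of Proposition \ref{Pr:EpsilonMonotone}, drawing on \cite{Mazari_2022} and on \cite{BaiHeLi}, then gives $\lambda_0>0$ under \ref{H:Theta0}, so $\lambda_\e\ge\lambda_0/2$ for $\e$ small, uniformly in $m\in\mathcal{P}(\T)$, yielding the strict concavity of $g$ and thus \eqref{Eq:Mono2}. The main obstacle is conceptually the same as in the parabolic case, namely establishing the uniform positivity of the limit eigenvalue $\lambda_0$; fortunately this has already been done in \cite{Mazari_2022} in exactly the form needed here, and the elliptic setting actually simplifies matters since the Duhamel–type argument of Lemmata \ref{Le:Regularity}–\ref{Le:Duhamel} is replaced by direct elliptic regularity.
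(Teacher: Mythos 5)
Your proposal is correct and follows exactly the route the paper intends: the paper omits the proof of Proposition~\ref{Pr:EpsilonMonotoneErgodic}, stating only that it is identical to that of Proposition~\ref{Pr:EpsilonMonotone}, and your write-up is precisely that adaptation (elliptic linearised and adjoint equations, the same rewriting of $\ddot g$ via $\psi_{\e,m}$, and reduction to the positivity of the limiting Rayleigh quotient \eqref{Eq:Lambda0} from \cite{Mazari_2022}). Your observations that the time-boundary term and the $\partial_t\psi_{\e,m}$ contribution simply disappear, and that well-posedness of the linearised problems follows from $\lambda_1(\mu,V_{\e,m})>0$ for small $\e$, correctly account for the only points where the two settings differ.
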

The proof of this proposition is identical to that of Proposition \ref{Pr:EpsilonMonotone} and is omitted here.
With this proposition at hand, the proof of uniqueness of \eqref{Eq:ErgodicMFG} consists in applying
the standard arguments for the uniqueness of ergodic MFG, as detailed in \cite{CardaliaguetKAM,LionsPapanicolaouVaradhan}.

\section{Proof of Theorem \ref{Th:ErgodicMFG2nd}}
The strategy of proof uses in a crucial way the core ideas of \cite{CardaliaguetKAM}. Let us introduce some notations; we let $\e>0\,,\nu\geq 0$ be fixed and, for any $T>0$ 
\[\theta^T:= \theta_{\e,m}\,, m^T=m\,, u^T:=u\] where $(\theta_{\e,m},m,u)$ is the unique solution of \eqref{Eq:MFGCompet}. We also define the rescaled functions
\[ \Theta^T(s,x):=\theta^T(sT,x)\,, M^T(s,x):=m(sT,x)\,, W^T(s,x):=u^T(sT,x)\text{ for any $(s,x)\in [0,1]\times \T$}.\]
Let $(\overline \lambda, \overline\theta_{\e,\overline m},\overline m,\overline u)$ be the unique solution of the ergodic system \eqref{Eq:ErgodicMFG}.
We start by proving the convergence results on $\Theta^T$.

\begin{proof}[Proof of the $L^2$ convergence of $\Theta^T$]
    Let us define $\overline{\theta}^T$ as the solution of
    \begin{equation}
\begin{cases}
\partial_t\overline \theta^T-\mu\Delta \overline \theta^T-\overline \theta^T\left(K-\e(\rho\star \overline m)-\overline \theta^T\right)=0&\text{ in }[0,T]\times \T\,, 
\\ \overline\theta^T(0,\cdot)=\theta_0&\text{ in }\T.\end{cases}\end{equation}
        Using the change of variables $t=sT$, we get
    \begin{align*}
        \Vert\Theta^T-\overline\theta_{\e,\overline m}\Vert_{L^2((0,1)\times\T)}^2
        &=
\frac1T        \Vert\theta^T-\overline\theta_{\e,\overline m}\Vert_{L^2((0,T)\times\T)}^2
        \\
        &\leq
        \frac1{T}
        \left(\Vert\theta^T-\overline\theta^T\Vert_{L^2((0,T)\times\T)}
        +\Vert\overline\theta^T-\overline\theta_{\e,\overline m}\Vert_{L^2((0,T)\times\T)}\right)^2
        \\
        &=
        \frac2{T}
        \left\Vert
        \theta^T-\overline\theta^T
        \right\Vert_{L^2((0,T)\times\T)}^2
        +\frac2{T}
        \int_0^T
        \Vert\overline\theta^T(t)-\overline\theta_{\e,\overline m}\Vert_{L^2(\T)}^2dt.
    \end{align*}
    However, from Proposition \ref{Pr:EpsilonMonotone} we know that
    \[ \left\Vert
        \theta^T-\overline\theta^T
        \right\Vert_{L^2((0,T)\times\T)}^2\leq -\frac1{a\e} \iint_\TT (\rho\star \theta^T-\rho\star \overline\theta^T)d(m^T-\overline m)\] and, similarly, from \eqref{Eq:EldLongTime}, 
    \[  \int_0^T
        \Vert\overline\theta^T(t)-\overline\theta_{\e,\overline m}\Vert_{L^2(\T)}^2dt\leq     C\] for some constant $T$ independant of $T$, whence
        \[ \Vert\Theta^T-\overline\theta_{\e,\overline m}\Vert_{L^2((0,1)\times\T)}^2\leq  -\frac2{a\e T} \iint_\TT (\rho\star \theta^T-\rho\star \overline\theta^T)d(m^T-\overline m)+\frac{C}{T}.\]
    
It now remains to prove the following lemma:
\begin{lemma}
    \label{Lem:Cardaliaguet}
    Under the assumption of Theorem \ref{Th:ErgodicMFG2nd},
    there exists a constant $C>0$ such that 
    \begin{equation}\label{Eq:Cardaliaguet}
       0\geq \iint_{[0,T]\times \T}\left(\rho\star \theta^T-\rho\star \overline\theta\right) d(m^T-\overline m)\geq - C.
    \end{equation}
\end{lemma}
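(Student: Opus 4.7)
\medskip

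\noindent\textbf{Proof plan for Lemma \ref{Lem:Cardaliaguet}.}

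The upper bound is an immediate consequence of Proposition~\ref{Pr:EpsilonMonotone} applied with $m_1 = m^T$ (time-dependent) and the second measure equal to the constant-in-time measure $\overline m$; the solution of \eqref{Eq:Main} associated with the constant measure $\overline m$ is precisely $\overline \theta^T$ (time-dependent, converging exponentially fast to $\overline\theta_{\e,\overline m}$ by \eqref{Eq:EldLongTime}), and Proposition~\ref{Pr:EpsilonMonotone} gives the claimed non-positivity, up to replacing $\overline \theta^T$ by $\overline\theta_{\e,\overline m}$ (this discrepancy will be absorbed at the very end using \eqref{Eq:EldLongTime}). The heart of the matter is thus the \emph{lower} bound.

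For this I would follow the classical Lasry–Lions duality strategy, adapted to the ergodic comparison as in \cite{CardaliaguetKAM}. Define
\[ \tilde u(t,x):=\overline u(x)+\overline\lambda(T-t),\]
so that, using \eqref{Eq:ErgodicMFG}, $\tilde u$ solves the HJB equation
$-\partial_t\tilde u-\nu\Delta\tilde u-\tfrac12|\nabla\tilde u|^2=\rho\star\overline\theta_{\e,\overline m}$
with terminal datum $\tilde u(T,\cdot)=\overline u$; the stationary Fokker–Planck equation for $\overline m$ can correspondingly be read as $\partial_t\overline m-\nu\Delta\overline m+\nabla\cdot(\overline m\nabla\tilde u)=0$. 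Setting $w:=u^T-\tilde u$ and $\mu:=m^T-\overline m$, I would compute $\frac{d}{dt}\int_\T w\,\mu\,dx$ by plugging in the two HJB and two FPK equations. The viscous terms cancel after one integration by parts, and the standard quadratic rearrangement
\[
\int_\T \nabla w\cdot(m^T\nabla u^T-\overline m\nabla\tilde u)\,dx
-\int_\T\frac{|\nabla u^T|^2-|\nabla\tilde u|^2}{2}(m^T-\overline m)\,dx
=\frac12\int_\T (m^T+\overline m)|\nabla w|^2\,dx
\]
yields, after integration from $0$ to $T$, the key identity
\[
\iint_\TT (\rho\star\theta^T-\rho\star\overline\theta_{\e,\overline m})(m^T-\overline m)
=\frac12\iint_\TT(m^T+\overline m)|\nabla w|^2
+\!\int_\T(u^T(0)-\overline u-\overline\lambda T)(m_0-\overline m)
+\!\int_\T\overline u(m^T(T)-\overline m).
\]

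The first term on the right-hand side is non-negative and can be discarded for the lower bound. The term at $t=T$ is bounded by $2\|\overline u\|_{L^\infty}$ since $m^T(T)$ and $\overline m$ are probability measures. The key step is the $t=0$ term: it is \emph{a priori} $O(T)$, but since $m_0-\overline m$ has zero mass one may subtract from the integrand any constant in $x$. Taking its mean value as this constant reduces the bound to the \emph{oscillation} of $u^T(0,\cdot)-\overline u$, which by the uniform Lipschitz estimate $\|\nabla u^T\|_{L^\infty}\leq C$ from Proposition~\ref{Pr:UniformRegularityEstimates} and $\|\nabla\overline u\|_{L^\infty}\leq C$ (same proposition applied to the stationary equation, or by elliptic regularity on \eqref{Eq:ErgodicMFG}) is controlled by a constant independent of $T$. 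Combined with $\|m_0-\overline m\|_{\mathcal M}\leq 2$, this gives a uniform-in-$T$ lower bound for $\iint(\rho\star\theta^T-\rho\star\overline\theta_{\e,\overline m})d(m^T-\overline m)$.

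Finally, to pass from $\overline\theta_{\e,\overline m}$ to $\overline\theta^T$ (so as to match the statement of the lemma), I would write
\[
\iint_\TT(\rho\star\overline\theta_{\e,\overline m}-\rho\star\overline\theta^T)d(m^T-\overline m)
\]
and bound it in absolute value using the exponential stability estimate \eqref{Eq:EldLongTime}, which gives $\|\overline\theta^T(t,\cdot)-\overline\theta_{\e,\overline m}\|_{L^\infty}\lesssim e^{-ct}$, hence a time integral bounded independently of $T$; then $m^T+\overline m$ being of uniformly bounded mass finishes the job. The main obstacle is precisely the boundary term at $t=0$: the magnitudes $u^T(0)$ and $\overline\lambda T$ individually blow up, and the only reason they can be tamed is the combination of (i) the zero-mass property of $m_0-\overline m$, which absorbs a constant, and (ii) the uniform-in-$T$ Lipschitz estimate on $u^T$ from Proposition~\ref{Pr:UniformRegularityEstimates}, which controls the remaining oscillation.
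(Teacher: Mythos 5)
Your proposal is correct and follows essentially the same route as the paper: the upper bound from Proposition \ref{Pr:EpsilonMonotone}, and for the lower bound the Lasry--Lions duality between $(u^T,m^T)$ and the lifted ergodic solution, discarding the non-negative term $\frac12\iint(m^T+\overline m)\Vert\nabla_x u^T-\nabla_x\overline u\Vert^2$, bounding the $t=T$ term by $\Vert\overline u\Vert_{L^\infty}$, and taming the $t=0$ term by exploiting the zero mass of $m_0-\overline m$ together with the uniform-in-$T$ Lipschitz bound of Proposition \ref{Pr:UniformRegularityEstimates}. The one point where you are more careful than the paper is the explicit passage from $\overline\theta_{\e,\overline m}$ to $\overline\theta^T$ via \eqref{Eq:EldLongTime}, which the paper leaves implicit in its notation; like the paper, you leave the first-order case to the mollification argument of the uniqueness proof.
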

\begin{proof}[Proof of Lemma \ref{Lem:Cardaliaguet}]
In the second-order case $\nu>0$, it suffices to proceed as in the proof of Theorem \ref{Th:UniquenessMFGCompet} to obtain that 
\begin{multline*}\iint_{[0,T]\times \T}\left(\rho\star \theta^T-\rho\star \overline\theta\right) d(m^T-\overline m)-\iint_{(0,T)\times \T}\frac{(m^T+\overline m)}2\Vert \n_x u^T-\n_x \overline u\Vert^2\\=\int_\T \overline u (m(T,\cdot)-\overline m)+\int_\T (u^T(0,\cdot)-\overline u)(m^T-\overline m).
\end{multline*}
Observe that  $\overline u\in L^\infty(\T)$. Furthermore, since $\int_\T m^T=\int_\T \overline m$, for any $\bar x\in \T$,
    \[ \int_\T u^T (0,\cdot)(m^T-\overline m)=\int_\T (u^T(0,\cdot)-u^T(0,\bar x))(m^T-\overline m)\leq2\Vert \overline m \Vert_{L^1(\T)} \Vert \n_x u^T\Vert_{L^\infty([0,T]\times\T)}.\]
    From this and Proposition \ref{Pr:UniformRegularityEstimates}, there exists a constant $C_0$ such that 
    \begin{equation*}\iint_{[0,T]\times \T}\left(\rho\star \theta^T-\rho\star \overline\theta\right) d(m^T-\overline m)-\iint_{(0,T)\times \T}\frac{(m^T+\overline m)}2\Vert \n_x u^T-\n_x \overline u\Vert^2\geq -C_0.
\end{equation*}
    This result holds in the first order case, using a similar approach as  the proof of Theorem \ref{Th:UniquenessMFGCompet}.
    \end{proof}
We thus deduce that 
\[\Vert\Theta^T-\overline\theta_{\e,\overline m}\Vert_{L^2((0,1)\times\T)}^2\leq\frac{C'}T\] for some constant $C'$ (that also depends on $\e$).
\end{proof}

\begin{proof}[Proof of the convergence of $W^T$]
    We can now conclude exactly as in \cite{CardaliaguetKAM}.
    Define $w^T:=\overline u +T\overline \lambda(1-s)$, which solves
\[-\frac{\partial_s w^T}T-\nu \Delta w^T+\frac{\Vert \n_x w^T\Vert^2}2=\rho \star \overline\theta_{\e,\overline m}\] while $W^T$ solves the same Hamilton-Jacobi equation with the right hand side replaced with $\rho\star \Theta^T$. By standard Hamilton-Jacobi estimates we obtain 
    \[ \Vert w^T(s,\cdot)-W^T(s,\cdot)\Vert_{L^\infty(\T)}\leq \Vert w^T(1,\cdot)-W^T(1,\cdot)\Vert_{L^\infty(\T)}+ T\int_0^1 \Vert \rho\star(\Theta^T-\overline\theta_{\e,\overline m})\Vert_{L^\infty(\T)}.\]
    Then, using
    the $L^2$-convergence of $\Theta^T$ and the
    Cauchy-Schwarz inequality, we get
\[ \Vert W^T/T-\overline \lambda(1-s)\Vert_{L^\infty}\leq \frac{C}T
    +\Vert\rho\Vert_{L^2(\T)}
    \Vert \Theta^T-\overline\theta_{\e,\overline m}\Vert_{L^2([0,T]\times\T)}
   \leq \frac{C'}{\sqrt{T}}\]
\end{proof}

\section{Proof of Theorem \ref{Th:Ergodic1D}}
Recall that Theorem \ref{Th:Ergodic1D} only holds in
dimension one with additional assumptions on the function $K$.
We will first give an explicit construction of solutions, and then check that these solutions are in fact unique. 
\paragraph{Construction of explicit solutions}
\begin{lemma}\label{Le:Rearrangement}
    Under the assumptions of Theorem \ref{Th:Ergodic1D},
    for $y\in (0,0.5)$, define $\theta_y$ as the solution of 
\begin{equation}\label{Eq:Aux}\begin{cases}
-\mu\theta''_y=\theta_y(K-\theta_y)&\text{ in }(y,0.5)\,, 
\\ \theta_y'(y)=\theta_y'(0.5)=0\,, \theta \geq 0\,, \theta_y(y)>0.\end{cases}\end{equation}
    Then:
    \begin{enumerate}
    \item $\theta_y$ decreasing on $(y,0.5)$ and satisfies
    $K(0.5)\leq\theta_y\leq K(y)$.
  \item $y\mapsto \th_y(y)$ is decreasing and $\lim_{y\to 0.5^-}\theta_y(y)=K(0.5)$.
  \end{enumerate}
\end{lemma}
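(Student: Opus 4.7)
The statement splits into two parts, which I would tackle in turn.

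For Part 1, the bounds $K(0.5)\leq \theta_y\leq K(y)$ follow from a standard sub-/supersolution argument: since $K$ is decreasing on $[y,0.5]$, the constants $\underline\theta:=K(0.5)$ and $\overline\theta:=K(y)$ are, respectively, a sub- and a supersolution of \eqref{Eq:Aux} with Neumann data trivially satisfied, and a standard comparison argument combined with the uniqueness of the positive solution of \eqref{Eq:Aux} (as in \cite{BHR}) gives the claim. The more substantive point is the monotonicity of $\theta_y$. I would establish it using a variational/rearrangement argument: $\theta_y$ is the unique positive minimizer of
\[ J(\theta):=\frac{\mu}{2}\int_y^{0.5}(\theta')^2-\frac{1}{2}\int_y^{0.5} K\theta^2+\frac{1}{3}\int_y^{0.5}\theta^3 \]
over $\{\theta\in W^{1,2}(y,0.5): \theta\geq 0\}$, uniqueness following from the fact that any non-trivial nonnegative minimizer must be strictly positive by the strong maximum principle, hence a positive solution of \eqref{Eq:Aux}, itself unique by \cite{BHR}. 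Let $\theta_y^{\downarrow}$ denote the decreasing rearrangement of $\theta_y$ on $[y,0.5]$. Since $K$ is already decreasing, Hardy--Littlewood gives $\int K\theta_y^2\leq \int K(\theta_y^{\downarrow})^2$, P\'olya--Szeg\H o gives $\int((\theta_y^{\downarrow})')^2\leq \int(\theta_y')^2$, and the cubic term is rearrangement invariant. Hence $J(\theta_y^{\downarrow})\leq J(\theta_y)$, and uniqueness of the positive minimizer forces $\theta_y=\theta_y^{\downarrow}$, i.e., $\theta_y$ is decreasing. Strict monotonicity then follows because a flat segment would force $\theta_y\equiv K$ on it, contradicting the strict monotonicity of $K$.

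For Part 2, the limit $\theta_y(y)\to K(0.5)$ as $y\to 0.5^-$ is immediate from the bound $K(0.5)\leq\theta_y(y)\leq K(y)$ of Part 1 together with the continuity of $K$. The decay of $y\mapsto\theta_y(y)$ I would handle by a sliding/comparison argument. Given $y_1<y_2$, extend $\theta_{y_2}$ to $[y_1,0.5]$ by setting $\tilde\theta_{y_2}:=\theta_{y_2}(y_2)$ on $[y_1,y_2]$ and $\tilde\theta_{y_2}:=\theta_{y_2}$ on $[y_2,0.5]$. The Neumann condition at $y_2$ makes $\tilde\theta_{y_2}$ of class $C^1$, and on $[y_1,y_2]$ one has $-\mu\tilde\theta_{y_2}''=0\leq \tilde\theta_{y_2}(K-\tilde\theta_{y_2})$ because $\theta_{y_2}(y_2)\leq K(y_2)\leq K$ there (by Part 1 and the monotonicity of $K$). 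So $\tilde\theta_{y_2}$ is a weak subsolution of \eqref{Eq:Aux} on $[y_1,0.5]$ with Neumann data. The difference $w:=\theta_{y_1}-\tilde\theta_{y_2}$ then satisfies
\[ -\mu w''-(K-\theta_{y_1}-\tilde\theta_{y_2})w\geq 0,\qquad w'(y_1)=w'(0.5)=0. \]
Now $\theta_{y_1}$ is itself a positive eigenfunction for eigenvalue $0$ of $-\mu\partial_x^2-(K-\theta_{y_1})$ with Neumann BCs on $[y_1,0.5]$, so that operator has principal eigenvalue $0$; adding the strictly positive quantity $\tilde\theta_{y_2}$ to the potential strictly raises the principal eigenvalue, so $-\mu\partial_x^2-(K-\theta_{y_1}-\tilde\theta_{y_2})$ satisfies the maximum principle with Neumann BCs. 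Therefore $w\geq 0$, and evaluating at $x=y_2$ yields $\theta_{y_2}(y_2)=\tilde\theta_{y_2}(y_2)\leq \theta_{y_1}(y_2)\leq \theta_{y_1}(y_1)$, the last inequality using Part 1 again.

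\paragraph{Main obstacle.} I expect the trickiest step to be the comparison used for $w$ in Part 2: the potential $K-\theta_{y_1}-\tilde\theta_{y_2}$ has no definite sign, so the maximum principle is not automatic; the whole argument hinges on reading off the sign of the principal eigenvalue of the relevant Schr\"odinger-type operator directly from $\theta_{y_1}$, which is a positive eigenfunction of a closely related operator for eigenvalue $0$, and on the monotonicity of the principal eigenvalue with respect to the potential. The rearrangement step in Part 1 is also slightly delicate in that uniqueness of the positive minimizer must be justified by excluding partial vanishing via the strong maximum principle.
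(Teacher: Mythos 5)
Your proof is correct, and for Part 1 it fleshes out exactly the argument the paper leaves implicit: the paper's proof simply invokes ``rearrangement inequalities'' for the monotonicity and ``the maximum principle'' for the bounds, and your Hardy--Littlewood/P\'olya--Szeg\H{o} computation on the energy functional together with the constant sub-/supersolutions $K(0.5)$ and $K(y)$ supplies precisely the intended details. For Part 2 the two arguments share the same key mechanism --- the linearized operator $-\mu\partial^2_{xx}-(K-\theta_{y_1}-\theta_{y_2})$ has positive principal Neumann eigenvalue because a positive solution is a zero-eigenfunction of the operator without the extra positive term in the potential --- but the executions differ. The paper argues by contradiction on the smaller interval $(y_2,0.5)$: assuming $\theta_{y_1}(y_1)\leq\theta_{y_2}(y_2)$, the difference $\varphi=\theta_{y_1}-\theta_{y_2}$ satisfies $\varphi(y_2)<0$ and $\varphi'(y_2)<0$, and testing the linearized equation against the negative part $\varphi_-$ produces a boundary term $-\mu\varphi_-'(y_2)\varphi_-(y_2)<0$ that contradicts the positivity of the Rayleigh quotient. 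You instead extend $\theta_{y_2}$ by the constant $\theta_{y_2}(y_2)$ to a $C^1$ weak subsolution on the whole of $(y_1,0.5)$ (your verification $\theta_{y_2}(y_2)\leq K(y_2)\leq K$ on $[y_1,y_2]$ is the right one, and the $C^1$ matching at $y_2$ ensures there is no adverse singular part) and apply the maximum principle there directly. Your route avoids the boundary-term bookkeeping, yields the stronger pointwise comparison $\theta_{y_1}\geq\tilde\theta_{y_2}$ on all of $[y_1,0.5]$, and, unlike the paper's written proof, explicitly treats the limit $\lim_{y\to 0.5^-}\theta_y(y)=K(0.5)$ by squeezing between $K(0.5)$ and $K(y)$; note also that your strict decrease of $y\mapsto\theta_y(y)$ does follow, since the middle inequality $\theta_{y_1}(y_2)\leq\theta_{y_1}(y_1)$ is strict by the strict monotonicity established in Part 1. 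The only point left implicit --- in your argument exactly as in the paper's --- is that a positive solution of \eqref{Eq:Aux} actually exists on $(y,0.5)$ for every $y$, i.e.\ that the corresponding Neumann eigenvalue $\lambda_1(\mu,K)$ on $(y,0.5)$ is negative; this is not a gap specific to your proposal.
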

\begin{proof}[Proof of Lemma \ref{Le:Rearrangement}]
    For a fixed $y\in(0,0.5)$, the existence of $\theta_y$
    and its monotonicity follow from rearrangement inequalities.   The bounds on $\theta_y$ are obtained using the maximum principle.
  
    Let us  prove that $y\mapsto\theta_y(y)$ is decreasing.
    Assume by contradiction that there exist $0<y_1<y_2< 0.5$ such that
    $\theta_{y_1}(y_1)\leq\theta_{y_2}(y_2)$.
As $\theta_{y_1}$ is decreasing, $\p(y_2)<0$,
    where $\p:=\theta_{y_1}-\theta_{y_2}$
    satisfies
\begin{equation}\label{Eq:phi} -\mu\p''-\p(K-(\theta_{y_1}+\theta_{y_2}))=0\text{ in }(y_2,0.5)\,, \p'(y_2)<0\,, \p'(0.5)=0.\end{equation}
    Let $\p_-$ be the negative part of $\p$;
    we have $\p_-'(y_2)>0$ and $\p_-(y_2)>0$.
    Let us multiply \eqref{Eq:phi} by $\p_-$ and integrate by parts, we obtain
\[\mu\int_{y_2}^1 (\p_-')^2-\int_{y_2}^1 \p_-^2 (K-(\theta_{y_1}+\theta_{y_2}))=-\mu\p_-'(y_2)\p_-(y_2)<0. \]
    This is a contradiction with the fact that
    $-\mu\partial_{xx}^2-(K-(\theta_{y_1}+\theta_{y_2}))$
    admits a positive first eigenvalue.
\end{proof}

\begin{proof}[Proof of the existence of solutions to \eqref{Eq:Ergodic1D}]
Let $\e\in (0,\int_{\mathbb{T}}K)$. There are two cases to be handled:    \begin{itemize}
        \item
            If $\e\geq\int_{\mathbb{T}} K-K(0.5)$,
            define 
            \begin{equation}
                \label{Eq:ExplicitSol2}
    \overline \theta:=\int_{\mathbb{T}}K-\e,
    \hspace*{0.5cm}
    \overline \lambda:=\int_{\mathbb{T}}K-\e,
    \hspace*{0.5cm}
    \overline m:=1 +\e^{-1}(K-\int_{\mathbb{T}}K)
    \hspace*{0.5cm}
    \text{ and }
    \hspace*{0.5cm}
    \overline u:= 0.
            \end{equation}
        \item
            If $\e<\int_{\mathbb{T}} K-K(0.5)$,
            by the mean-value theorem,
            there exists a unique $y\in(0,0.5)$ such that
            $\e=2\int_0^yK-2y\theta_y(y)$:
            indeed, it is easy to check that
            $y\mapsto\int_0^yK-y\theta_y(y)$ is increasing
            by taking its derivative and using Lemma \ref{Le:Rearrangement}.
            Then define, for $x\in(-0.5,0.5)$,
            \begin{equation}
                \label{Eq:ExplicitSol}
                \begin{aligned}
                    \overline\theta(x)&:=\theta_y(y)\mathds 1_{(0,y)}(|x|)+\theta_y(|x|)\mathds 1_{(y,1)}(|x|),
    \hspace*{2cm}
    \overline \lambda:=\theta_y(y),
    \\
    \overline m(x)&:=\e^{-1}(K-\theta_y(y))\mathds 1_{(0,y)}(|x|)
    \hspace*{0.5cm}
    \text{and}
    \hspace*{0.5cm}
    \overline u(x):=\overline\lambda 
    -\int_y^{\sup(|x|,y)} \sqrt{2(\overline\lambda-\overline \theta)}.
            \end{aligned}
        \end{equation}
    \end{itemize}
    In both cases,
direct computations show that $(\overline\lambda,\overline\theta,\overline m,\overline u)$ solves \eqref{Eq:Ergodic1D}
(we use the fact that $\overline u$ is $\mathscr C^1$, so that it is indeed the viscosity solution of the Hamilton-Jacobi equation). 

\end{proof}

\begin{remark}[Regarding the construction of \eqref{Eq:ExplicitSol}]
    To get an intuition on this construction, recall that from the weak KAM formula \eqref{Eq:WeakKAM1},
    $\overline \theta$ should be constant in $\mathrm{supp}(\overline m)$ and, in fact 
\begin{equation}\label{Eq:WeakKAM1D}
\mathrm{supp}(m)\subset \{ \overline \theta=\Vert \overline \theta\Vert_{L^\infty(\mathbb{T})}\}.
\end{equation} The expression of $\overline u$ is standard, see \cite{LionsPapanicolaouVaradhan}. \end{remark}
\paragraph{Obtaining the uniqueness of solutions}
In this paragraph, we establish that the solutions we just constructed are the unique solutions of \eqref{Eq:Ergodic1D}.
\begin{lemma}\label{Le:NoAtoms}
    Take $(\overline \lambda,\overline \theta,\overline m,\overline u)$ a solution of \eqref{Eq:Ergodic1D}.
    The measure $\overline m$ has no atoms.
\end{lemma}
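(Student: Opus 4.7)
The plan is to argue by contradiction. Suppose $\overline m$ carries an atom of mass $a>0$ at some $x_0\in\mathbb T$, and decompose $\overline m = a\delta_{x_0}+\tilde m$. The strategy is to show that this configuration forces $\overline\theta$ to exhibit simultaneously a global maximum at $x_0$ and a positive jump of $\overline\theta'$ at the same point, which is impossible.

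I first claim that $\overline u'(x_0)=0$, hence $\overline\theta(x_0)=\overline\lambda$ by the Hamilton--Jacobi equation $\overline\lambda-(\overline u')^2/2=\overline\theta$. The Fokker--Planck equation $(\overline m\overline u')'=0$ on the torus states that $\overline m\overline u'$ is a constant distribution, i.e.\ a multiple of Lebesgue measure with no atomic part; since the quadratic Hamiltonian makes $\overline u$ Lipschitz and semiconcave (so that the one-sided derivatives $\overline u'(x_0^\pm)$ exist), the atom of $\overline m$ at $x_0$ is compatible with this identity only if $\overline u'(x_0^\pm)=0$. Equivalently, atoms of $\overline m$ must be fixed points of the flow generated by $\overline u'$. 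Combined with HJ, we obtain $\overline\theta(x_0)=\overline\lambda$. Since $\overline\theta\geq 0$, $\overline\theta\not\equiv 0$ and $\overline\theta\leq\overline\lambda$ rule out $\overline\lambda\leq 0$, we conclude $\overline\theta(x_0)=\overline\lambda>0$, so that $x_0$ is a global maximum of $\overline\theta$.

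Next, I isolate the singular part of the equation for $\overline\theta$. In one dimension, bounded measures belong to $W^{-1,\infty}_{\mathrm{loc}}$, so the identity $-\mu\overline\theta''=\overline\theta(K-\overline\theta)-\e\overline\theta\,\overline m$ gives $\overline\theta\in\mathscr C^0(\mathbb T)$ with $\overline\theta'\in\mathrm{BV}_{\mathrm{loc}}$; the only singular contribution at $x_0$ is $-\e\overline\theta(x_0)a\delta_{x_0}=-\e\overline\lambda a\delta_{x_0}$, which translates into the jump relation
\[
\overline\theta'(x_0^+)-\overline\theta'(x_0^-)=\frac{\e\overline\lambda a}{\mu}>0.
\]
But $x_0$ being a global maximum of the continuous function $\overline\theta$ forces $\overline\theta'(x_0^-)\geq 0\geq \overline\theta'(x_0^+)$ (via the one-sided limits of $\overline\theta'\in\mathrm{BV}$), hence the jump must be non-positive, a contradiction. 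The main obstacle of this plan lies in the first step: giving rigorous meaning to ``$\overline u'(x_0)=0$'' at a single point since $\overline u'$ is only defined Lebesgue-a.e.\ \emph{a priori}. The semiconcavity of $\overline u$ (cf.\ Proposition~\ref{Pr:RegularityHJB}) is crucial here, as it furnishes pointwise one-sided derivatives everywhere that can be inserted in the distributional identity $(\overline m\overline u')'=0$ by testing against bump functions concentrated near $x_0$ and passing to the limit.
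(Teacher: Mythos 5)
Your endgame is the paper's: at an atom $x_0$ of $\overline m$ the identity $-\mu\overline\theta''=\overline\theta(K-\e\overline m-\overline\theta)$ forces the positive jump $\overline\theta'(x_0^+)-\overline\theta'(x_0^-)=\frac{\e}{\mu}\overline\theta(x_0)\,\overline m(\{x_0\})$, which is incompatible with $x_0$ being a maximum point of the continuous function $\overline\theta$. Your BV observation is in fact cleaner than the paper's on this point: since $\overline\theta''$ is a bounded measure, $\overline\theta'$ has one-sided limits everywhere, which dispenses with the paper's preliminary reduction to an \emph{isolated} atom. Where you genuinely diverge is in how you certify that $x_0$ maximises $\overline\theta$: the paper imports the weak KAM inclusion \eqref{Eq:WeakKAM1D}, $\mathrm{supp}(\overline m)\subset\{\overline\theta=\Vert\overline\theta\Vert_{L^\infty(\mathbb{T})}\}$, whereas you try to extract $\overline u'(x_0)=0$ from the stationary Fokker--Planck equation and then conclude via $\overline\theta=\overline\lambda-(\overline u')^2/2$.

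That extraction is where the gap lies, and it is not merely a matter of rigour. The identity $(\overline m\,\overline u')'=0$ only constrains whichever value is assigned to $\overline u'$ at $x_0$ when the product measure is defined; it does not control the one-sided derivatives. Concretely, the a.e.\ Hamilton--Jacobi identity $(\overline u')^2=2(\overline\lambda-\overline\theta)$ plus continuity of $\overline\theta$ gives $|\overline u'(x_0^+)|=|\overline u'(x_0^-)|$, and the one-sided Hessian bound of Proposition \ref{Pr:RegularityHJB} (note: $\overline u$ is semi\emph{convex}, not semiconcave, for this maximisation problem) still leaves open the symmetric kink $\overline u'(x_0^{\pm})=\pm q$ with $q>0$. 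In that configuration any symmetric regularisation of $\overline m\,\overline u'$ has vanishing atomic part at $x_0$, and the convention $\overline u'(x_0):=0$ makes $\overline m(\{x_0\})\,\delta_{x_0}$ a perfectly admissible distributional solution of the stationary Fokker--Planck equation; yet $\overline\theta(x_0)=\overline\lambda-q^2/2<\overline\lambda$, so $x_0$ need not be a maximum of $\overline\theta$ and your final contradiction does not fire. Excluding this scenario is precisely the content of the Mather-theoretic inclusion \eqref{Eq:WeakKAM1D} (atoms of $\overline m$ lie on the projected Mather set, where $\overline u$ is differentiable); you should either invoke it as the paper does, or supply an argument that every solution of \eqref{Eq:Ergodic1D} charges only differentiability points of $\overline u$. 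The remainder of your proof is sound, and once this step is repaired it yields the statement.
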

\begin{proof}[Proof of Lemma \ref{Le:NoAtoms}]
Argue by contradiction and assume that $\overline m$ has atoms. Let $X_{\mathrm{sing}}$ be the set of atoms of $\overline m$. We first observe that $X_{\mathrm{sing}}$ necessarily has an isolated point. Indeed,  should that not be the case, $X_{\mathrm{sing}}$ would  be dense in an open subset $\omega\neq \emptyset$ of $(-0.5,0.5)$. It follows from \eqref{Eq:WeakKAM1D} that $\overline \theta$ is constant in $\overline \omega$. As $\overline \theta$ is continuous, it is then constant in $\overline \omega$, whence $K-\e\overline m-\overline \theta=0$ in $\overline \omega$, whence $m$ has no atom in $\omega$, a contradiction. Now, let us show that $X_{\mathrm{sing}}=\emptyset$. If this is not the case, let $x^*$ be an isolated atom of $m$.  In particular, $\overline \theta$ is $\mathscr C^1$ in $(x^*-\e,x^*)\cup (x^*,x^*+\e)$. Indeed, in the sense of distributions, 
\[ -\mu\overline \theta'(x)=\int_0^x \overline \theta(K-m-\overline\theta),\] and as $m^*$ has no atoms in $(x^*-\e,x^*)\cup (x^*,x^*+\e)$, the conclusion follows.

Now, as $K\,, \overline \theta$ are continuous, $-\mu \partial^2_{xx} \overline \theta$ has an atom at $x^*$. Adopting the notation $[f](x):=\lim_{\e\to 0} (f(x+\e)-f(x-\e))$ we obtain 
\[ \left[\overline\theta'\right](x^*)=\frac{\overline \theta(x^*)}{\mu}\overline m(\{x^*\})>0.\] However, this is in contradiction with $\overline \theta(x^*)=\left\Vert \overline \theta\right\Vert_{L^\infty}$.
\end{proof}
As a consequence of Lemma \ref{Le:NoAtoms}, $\overline\theta$ is $\mathscr C^1$ in $\mathbb T$. 
\begin{lemma}\label{Le:SuppM}
There exists $y\in (0,1)$ such that $\mathrm{supp}(\overline m)=[-y,y]$.
\end{lemma}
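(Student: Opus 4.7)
Set $c:=\max_{\mathbb T}\overline\theta=\overline\lambda$ and $T:=\{\overline\theta=c\}\subset \mathbb T$, which is closed. The plan is to show that $T$ itself is a symmetric closed interval $[-y,y]$ with $y\in(0,1/2)$, and then to conclude that $\mathrm{supp}(\overline m)=T$ using the structural information already derived.

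First, the weak KAM formula \eqref{Eq:WeakKAM1D} gives $\mathrm{supp}(\overline m)\subset T$. On the interior $T^\circ$, $\overline\theta\equiv c$ and thus $\overline\theta''\equiv 0$, so the third equation of \eqref{Eq:Ergodic1D} yields $\overline m=\e^{-1}(K-c)$; the constraint $\overline m\geq 0$ forces $T^\circ\subset\{K\geq c\}=[-y_c,y_c]$ where $y_c\in[0,1/2]$ satisfies $K(y_c)=c$, using the evenness and strict monotonicity of $K$. To extend this inclusion to all of $T$, I would take $x_0\in\partial T$: the $\mathscr C^1$ regularity of $\overline\theta$ gives $\overline\theta(x_0)=c$, $\overline\theta'(x_0)=0$, and the one-sided limit $\overline\theta''(x_0^\pm)=-c(K(x_0)-c)/\mu$ computed from the $T^c$ side (where $\overline m=0$ and the ODE is classical) implies $K(x_0)\geq c$; otherwise $\overline\theta$ would be convex near $x_0$ from the $T^c$ side, contradicting the local maximum property. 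Hence $T\subset[-y_c,y_c]$.

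Second, I would prove that $T$ is connected --- a single closed interval $[\alpha,\beta]\subset[-y_c,y_c]$ --- by ruling out gap arcs. If some connected component $(p,q)$ of $\mathbb T\setminus T$ had both endpoints in $T$, then $(p,q)\subset[p,q]\subset[-y_c,y_c]$, on which $\overline\theta<c$ and $K>c$; the logistic ODE would then give $\overline\theta''<0$ strictly on $(p,q)$, and the strict concavity together with $\overline\theta(p)=\overline\theta(q)=c$ would force $\overline\theta>c$ on the interior, contradicting $\overline\theta\leq c$. Therefore $\mathbb T\setminus T$ has a single connected component --- the ``outer'' arc wrapping through $\pm 1/2$ --- and $T=[\alpha,\beta]$.

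The main remaining obstacle is establishing that $T$ is symmetric, i.e.\ $\alpha+\beta=0$. My approach is to exploit the energy-type identity obtained by multiplying the logistic ODE $-\mu\overline\theta''=\overline\theta(K-\overline\theta)$ on $T^c$ by $\overline\theta'$ and integrating over the unfolded arc from $\beta$ to $\alpha+1$; using the boundary conditions $\overline\theta(\alpha)=\overline\theta(\beta)=c$ and $\overline\theta'(\alpha)=\overline\theta'(\beta)=0$ one obtains
\[\int_{T^c} K'(x)\bigl(\overline\theta^2(x)-c^2\bigr)\,dx=0.\]
Since $K'$ is odd (by evenness of $K$) and changes sign only at $x=0$, and since $\overline\theta<c$ strictly on $T^c$, I would combine this identity with the strict monotonicity of $K$ on $(0,1/2)$ to force $\alpha+\beta=0$. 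A complementary strategy is to use that $\tilde\theta(x):=\overline\theta(-x)$ solves the same ODE on the reflected arc with the same boundary data (thanks to evenness of $K$) and to appeal to Cauchy--Lipschitz uniqueness for the associated initial-value problem to conclude $\alpha=-\beta$. Setting $y:=\beta$, and recalling that $\overline m=\e^{-1}(K-c)$ on $T^\circ$ with no singular part on $\partial T$ (by Lemma~\ref{Le:NoAtoms}), one finally obtains $\mathrm{supp}(\overline m)=\overline{T^\circ}=[-y,y]$.
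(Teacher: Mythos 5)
Your treatment of the level set $T=\{\overline\theta=c\}$ is a genuinely different route from the paper's for the ``interval'' part of the statement, and it is essentially sound: you rule out gap components of $T$ inside $\{K\geq c\}$ by strict concavity of $\overline\theta$ there (using $-\mu\overline\theta''=\overline\theta(K-\overline\theta)>0$ on a gap, which needs $\overline\theta>0$ from the strong maximum principle), whereas the paper works directly with $\mathrm{supp}(\overline m)$ and invokes Lemma \ref{Le:Rearrangement}: on a gap $(a,b)\subset(0,0.5)$ with Neumann data at both ends, $\overline\theta$ coincides with the decreasing solution $\theta_a$, so $\overline\theta(b)<\overline\theta(a)$, contradicting that both values equal $\Vert\overline\theta\Vert_{L^\infty}$; the same lemma is reused to show $0\in\mathrm{supp}(\overline m)$. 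Your concavity argument is more elementary and does not need the monotonicity of $K$ beyond locating $\{K\geq c\}$; the passage from $T$ back to $\mathrm{supp}(\overline m)$ at the end is also fine, since $\overline m=\e^{-1}(K-c)>0$ on $T^\circ$.

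The genuine gap is the symmetry $\alpha+\beta=0$, which you correctly identify as the crux and then do not close. Neither sketch works as written. The identity $\int_{T^c}K'(\overline\theta^2-c^2)=0$ is correct (multiply by $\overline\theta'$, integrate over the arc, use $\overline\theta=c$ and $\overline\theta'=0$ at both ends), but it is a single scalar relation whose two contributions, coming from the portions of $T^c$ where $K'<0$ and where $K'>0$, have opposite signs; it only says they cancel, and without an independent comparison of $\overline\theta(x)$ with $\overline\theta(-x)$ this is compatible a priori with $\alpha+\beta\neq0$ (note also that if $0\notin[\alpha,\beta]$ the sign decomposition you implicitly use is not even available). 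The Cauchy--Lipschitz alternative fails outright: $\tilde\theta(x)=\overline\theta(-x)$ does solve the same non-autonomous ODE, but its Cauchy data $(c,0)$ are prescribed at $-\alpha$ while those of $\overline\theta$ are prescribed at $\beta$; if $\alpha\neq-\beta$ these are distinct initial-value problems and uniqueness gives nothing. The paper avoids relying on a symmetry argument by proving $0\in\mathrm{supp}(\overline m)$ directly via Lemma \ref{Le:Rearrangement} (if $a=\inf(\mathrm{supp}(\overline m)\cap[0,0.5])>0$ one again produces a contradiction with $\overline\theta(a)=\Vert\overline\theta\Vert_{L^\infty}$), and that is the step your proof is missing. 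As it stands you have shown $\mathrm{supp}(\overline m)=[\alpha,\beta]$ for some $\alpha<\beta$ in $\{K\geq c\}$, but not that this interval is centred at the origin.
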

\begin{proof}[Proof of Lemma \ref{Le:SuppM}]
We begin by showing that $\mathrm{supp}(\overline m)\cap [0,0.5]$ is connected. Should this not be the case, there exists $a<b$ such that $(a,b)\subset \mathrm{supp}(\overline m)^c$, $a,b\in \mathrm{supp}(\overline m)\cap [0,0.5] $.
    By \eqref{Eq:WeakKAM1D}, we have
    $\overline \theta(a)=\overline \theta(b)=\Vert \overline \theta\Vert_{L^\infty(\mathbb{T})}$,
    so that
    $\overline \theta'(a)=\overline \theta'(b)=0$.
    Observe that Lemma \ref{Le:Rearrangement} holds if the end point
    of the interval is $b$ instead of $0.5$.
    Therefore, we get $\overline \theta(b)<\overline \theta(a)$, a contradiction.

    Let us now show that $0\in \mathrm{supp}(m)$:
    if $a=\inf(\mathrm{supp}(m)\cap [0,0.5])>0$, then,
    using a similar argument as above, we get
    $\overline \theta(0)>\overline \theta(a)=\Vert\overline \theta\Vert_{L^\infty}$, a contradiction.
\end{proof}
Finally, let us conclude the proof of Theorem \ref{Th:Ergodic1D}.
\begin{proof}[End of the proof of Theorem \ref{Th:Ergodic1D}]
We proved that the support of $m$ is of the form $[-y,y]$ for some $y\in(0,0.5]$
and that $\overline\theta$ is constant on that support, denote by $c\geq0$ that constant.
Assume $c>0$, using the reaction-diffusion equation satisfied by $\overline{\theta}$,
we get $K-\e m-c=0$ which implies $\int K>\e$ by integration.
We just proved that $\e\geq \int K$ implies $\theta\equiv0$.

Now assume that $y<0.5$, by integrating on $[-y,y]$ we obtain
$$
    \e=
    \int_{-y}^yK-2yc
    =
    2\left(\int_{0}^yK-y\theta_y(y)\right)
    <
    \int_{\mathbb{T}}K-K(0.5),
$$
where the inequality comes from 
$y\mapsto\int_0^yK-y\theta_y(y)$ being increasing
as used in the construction of \eqref{Eq:ExplicitSol}.
Consequently,
if $\int_{\mathbb{T}}K-K(0.5)\leq \e <\int_{\mathbb{T}}K$
then $y=0.5$ which implies that $\overline\th\equiv\int_{\mathbb{T}}K-\e$.
This and $K-\e m-\overline\th=0$ imply
$\overline m=1+\e^{-1}(K-\int_{\mathbb{T}}K)$.
We recover \eqref{Eq:ExplicitSol2} using \eqref{Eq:Ergodic1D}.

The only case that remains to be treated is
$\e<\int_{\mathbb{T}}K-K(0.5)$.
Using a similar argument, we can prove that
it implies $y<0.5$, so that $\overline{\theta}$ 
coincides with $\theta_y$ from Lemma \eqref{Le:Rearrangement}.
We conclude that the solution can only be given
by \eqref{Eq:ExplicitSol} as in the previous paragraph.
\end{proof}

\bibliographystyle{abbrv}
\bibliography{BiblioNash,MFGbiblio}

\begin{thebibliography}{10}

\bibitem{Achdou_2020}
Y.~Achdou, P.~Cardaliaguet, F.~Delarue, A.~Porretta, and F.~Santambrogio.
\newblock {\em Mean Field Games}.
\newblock Springer International Publishing, 2020.

\bibitem{achdou2021mean}
Y.~Achdou and Z.~Kobeissi.
\newblock Mean field games of controls: Finite difference approximations.
\newblock {\em Mathematics in Engineering}, 3(3):1--35, 2021.

\bibitem{Alberti_1992}
G.~Alberti, L.~Ambrosio, and P.~Cannarsa.
\newblock On the singularities of convex functions.
\newblock {\em Manuscripta Mathematica}, 76(1):421--435, dec 1992.

\bibitem{Almeida_2022}
L.~Almeida, J.~B. Arnau, and Y.~Privat.
\newblock Optimal control strategies for bistable {ODE} equations: Application
  to mosquito population replacement.
\newblock {\em Applied Mathematics \& Optimization}, 87(1), nov 2022.

\bibitem{almeida:hal-03811940}
L.~Almeida, A.~L\'eculier, G.~Nadin, and Y.~Privat.
\newblock Optimal control of bistable traveling waves: Looking for the best
  spatial distribution of a killing action to block a pest invasion.
\newblock {\em SIAM Journal on Control and Optimization}, 62(2):1291--1315,
  Apr. 2024.

\bibitem{BaiHeLi}
X.~Bai, X.~He, and F.~Li.
\newblock An optimization problem and its application in population dynamics.
\newblock {\em Proceedings of the American Mathematical Society},
  144(5):2161--2170, Oct. 2015.

\bibitem{Bardi_2019}
M.~Bardi and M.~Fischer.
\newblock On non-uniqueness and uniqueness of solutions in finite-horizon mean
  field games.
\newblock {\em {ESAIM}: Control, Optimisation and Calculus of Variations},
  25:44, 2019.

\bibitem{Bardi_2023}
M.~Bardi and H.~Kouhkouh.
\newblock {An Eikonal Equation with Vanishing Lagrangian Arising in Global
  Optimization}.
\newblock {\em Applied Mathematics \& Optimization}, 87(3), mar 2023.

\bibitem{Barles1985}
G.~Barles.
\newblock Asymptotic behavior of viscosity solutions of first order
  {Hamilton}-{Jacobi} equations.
\newblock {\em Ric. Mat.}, 34:227--260, 1985.

\bibitem{Barles_2000}
G.~Barles and P.~E. Souganidis.
\newblock {On the Large Time Behavior of Solutions of Hamilton--Jacobi
  Equations}.
\newblock {\em {SIAM} Journal on Mathematical Analysis}, 31(4):925--939, jan
  2000.

\bibitem{BHR}
H.~Berestycki, F.~Hamel, and L.~Roques.
\newblock Analysis of the periodically fragmented environment model : I --
  species persistence.
\newblock {\em Journal of Mathematical Biology}, 51(1):75--113, 2005.

\bibitem{Bressan_2022}
A.~Bressan, M.~T. Chiri, and N.~Salehi.
\newblock On the optimal control of propagation fronts.
\newblock {\em Mathematical Models and Methods in Applied Sciences},
  32(06):1109--1140, jun 2022.

\bibitem{Bressan_2013}
A.~Bressan, G.~M. Coclite, and W.~Shen.
\newblock A multidimensional optimal-harvesting problem with measure-valued
  solutions.
\newblock {\em {SIAM} Journal on Control and Optimization}, 51(2):1186--1202,
  jan 2013.

\bibitem{bressan2010measure}
A.~Bressan and W.~Shen.
\newblock Measure-valued solutions to a harvesting game with several players.
\newblock In {\em Advances in Dynamic Games: Theory, Applications, and
  Numerical Methods for Differential and Stochastic Games}, pages 399--423.
  Springer, 2010.

\bibitem{bressan2019competitive}
A.~Bressan and V.~Staicu.
\newblock On the competitive harvesting of marine resources.
\newblock {\em SIAM Journal on Control and Optimization}, 57(6):3961--3984,
  2019.

\bibitem{Malhame2}
P.~E. Caines, M.~Huang, and R.~P. Malham{\'e}.
\newblock {Large population stochastic dynamic games: closed-loop McKean-Vlasov
  systems and the Nash certainty equivalence principle}.
\newblock {\em Commun. Inf. Syst.}, 6:221--252, 2006.

\bibitem{CantrellCosner1}
R.~S. Cantrell and C.~Cosner.
\newblock Diffusive logistic equations with indefinite weights: Population
  models in disrupted environments {II}.
\newblock {\em {SIAM} Journal on Mathematical Analysis}, 22(4):1043--1064, jul
  1991.

\bibitem{CantrellCosner}
R.~S. Cantrell and C.~Cosner.
\newblock {\em Spatial Ecology via Reaction-Diffusion Equations}.
\newblock John Wiley \& Sons, 2003.

\bibitem{CardaliaguetCdF}
P.~Cardaliaguet.
\newblock Notes on mean field games (from {P}.-{L}. {L}ions lecture at
  {Coll\`ege de France}).

\bibitem{CardaliaguetKAM}
P.~Cardaliaguet.
\newblock Long time average of first order mean field games and weak {KAM}
  theory.
\newblock {\em Dynamic Games and Applications}, 3(4):473--488, aug 2013.

\bibitem{CardaliaguetLasryLionsPorretta}
P.~Cardaliaguet, J.-M. Lasry, P.-L. Lions, and A.~Porretta.
\newblock Long time average of mean field games.
\newblock {\em Networks and Heterogeneous Media}, 7(2):279--301, jun 2012.

\bibitem{Cardaliaguet_2020}
P.~Cardaliaguet and A.~Porretta.
\newblock An introduction to mean field game theory.
\newblock In {\em Lecture Notes in Mathematics}, pages 1--158. Springer
  International Publishing, 2020.

\bibitem{Dafermos}
C.~M. Dafermos.
\newblock Trend to steady state in a conservation law with spatial
  inhomogeneity.
\newblock {\em Quarterly of Applied Mathematics}, 45(2):313--319, 1987.

\bibitem{Duprez_2021}
M.~Duprez, R.~H{\'{e}}lie, Y.~Privat, and N.~Vauchelet.
\newblock Optimization of spatial control strategies for population
  replacement, application to {W}olbachia.
\newblock {\em {ESAIM}: Control, Optimisation and Calculus of Variations},
  27:74, 2021.

\bibitem{E1999AUBRYMATHERTA}
W.~E.
\newblock Aubry-{M}ather theory and periodic solutions of the forced {B}urgers
  equation.
\newblock {\em Communications on Pure and Applied Mathematics}, 52:811--828,
  1999.

\bibitem{Evans_2008}
L.~C. Evans.
\newblock Weak {KAM} theory and partial differential equations.
\newblock In {\em Calculus of Variations and Nonlinear Partial Differential
  Equations}, pages 123--154. Springer Berlin Heidelberg, 2008.

\bibitem{FATHI19971043}
A.~Fathi.
\newblock Th{\'e}or{\`e}me {KAM} faible et th{\'e}orie de {M}ather sur les
  syst{\`e}mes lagrangiens.
\newblock {\em Comptes Rendus de l'Acad{\'e}mie des Sciences - Series I -
  Mathematics}, 324(9):1043--1046, 1997.

\bibitem{Fathi_1998}
A.~Fathi.
\newblock Sur la convergence du semi-groupe de {L}ax-{O}leinik.
\newblock {\em Comptes Rendus de l{\textquotesingle}Acad{\'{e}}mie des Sciences
  - Series I - Mathematics}, 327(3):267--270, aug 1998.

\bibitem{zbMATH07729952}
P.~J. Graber and A.~R. M{\'e}sz{\'a}ros.
\newblock On monotonicity conditions for mean field games.
\newblock {\em J. Funct. Anal.}, 285(9):45, 2023.
\newblock Id/No 110095.

\bibitem{heo2021ratio}
J.~Heo and Y.~Kim.
\newblock On the ratio of biomass to total carrying capacity in high
  dimensions.
\newblock {\em Journal of the Korean Mathematical Society}, 58(5):1227--1237,
  2021.

\bibitem{Malhame1}
M.~Huang, P.~E. Caines, and R.~P. Malhame.
\newblock Large-population cost-coupled {LQG} problems with nonuniform agents:
  Individual-mass behavior and decentralized $\varepsilon$-nash equilibria.
\newblock {\em {IEEE} Transactions on Automatic Control}, 52(9):1560--1571, sep
  2007.

\bibitem{KMFRB23}
Z.~Kobeissi, I.~Mazari-Fouquer, and D.~Ruiz-Balet.
\newblock The tragedy of the commons: A mean-field game approach to the
  reversal of travelling waves, 2023.

\bibitem{Kruzkov}
S.~N. Kru\v{z}kov.
\newblock First order quasilinear equations in several independent variables.
\newblock {\em Mathematics of the USSR-Sbornik}, 10(2):217, 1970.

\bibitem{Lam_2022}
K.-Y. Lam and Y.~Lou.
\newblock {\em Introduction to Reaction-Diffusion Equations: Theory and
  Applications to Spatial Ecology and Evolutionary Biology}.
\newblock Springer International Publishing, 2022.

\bibitem{LasryLions1}
J.-M. Lasry and P.-L. Lions.
\newblock Jeux {\`{a}} champ moyen. i {\textendash} le cas stationnaire.
\newblock {\em Comptes Rendus Mathematique}, 343(9):619--625, nov 2006.

\bibitem{LasryLions2}
J.-M. Lasry and P.-L. Lions.
\newblock Jeux {\`{a}} champ moyen. {II} {\textendash} horizon fini et
  contr{\^{o}}le optimal.
\newblock {\em Comptes Rendus Mathematique}, 343(10):679--684, nov 2006.

\bibitem{LasryLions3}
J.-M. Lasry and P.-L. Lions.
\newblock Mean field games.
\newblock {\em Japanese Journal of Mathematics}, 2(1):229--260, mar 2007.

\bibitem{LiangZhang}
X.~Liang and L.~Zhang.
\newblock The optimal distribution of resources and rate of migration
  maximizing the population size in logistic model with identical migration.
\newblock {\em Discrete {\&} Continuous Dynamical Systems - B}, 22(11):0--0,
  2017.

\bibitem{LionsGeneralized}
P.-L. Lions.
\newblock {\em Generalized solutions of {Hamilton}-{Jacobi} equations},
  volume~69 of {\em Res. Notes Math., San Franc.}
\newblock Pitman Publishing, London, 1982.

\bibitem{LionsPapanicolaouVaradhan}
P.-L. Lions, G.~Papanicolaou, and S.~Varadhan.
\newblock Homogenization of {Hamilton-Jacobi equations}.
\newblock Typeset by N. Yamada, 1996.

\bibitem{LouInfluence}
Y.~Lou.
\newblock On the effects of migration and spatial heterogeneity on single and
  multiple species.
\newblock {\em Journal of Differential Equations}, 223(2):400--426, Apr. 2006.

\bibitem{Mazari2021}
I.~Mazari, G.~Nadin, and Y.~Privat.
\newblock Optimisation of the total population size for logistic diffusive
  equations: bang-bang property and fragmentation rate.
\newblock {\em Communications in Partial Differential Equations}, pages 1--32,
  Dec. 2021.

\bibitem{Mazari_2022}
I.~Mazari and D.~Ruiz-Balet.
\newblock Spatial ecology, optimal control and game theoretical fishing
  problems.
\newblock {\em Journal of Mathematical Biology}, 85(5), oct 2022.

\bibitem{Murray}
J.~D. Murray, editor.
\newblock {\em Mathematical Biology}.
\newblock Springer New York, 2002.

\bibitem{Namah_1999}
G.~Namah and J.-M. Roquejoffre.
\newblock Remarks on the long time behaviour of the solutions of
  {H}amilton-{J}acobi equations.
\newblock {\em Communications in Partial Differential Equations},
  24(5-6):883--893, jan 1999.

\bibitem{ShigesadaKawaski}
N.~Shigesada and K.~Kawasaki.
\newblock {\em Biological Invasions: Theory and Practice}.
\newblock Oxford University Press, 1997.

\bibitem{Skellam}
J.~G. Skellam.
\newblock Random dispersal in theoretical populations.
\newblock {\em Biometrika}, 38(1-2):196--218, 06 1951.

\bibitem{SuTongYang}
Y.-H. Su, , W.-T. Li, and F.-Y. Yang.
\newblock Effects of nonlocal dispersal and spatial heterogeneity on total
  biomass.
\newblock {\em Discrete {\&} Continuous Dynamical Systems - B}, 22(11):1--8,
  2017.

\end{thebibliography}

\end{document}